\documentclass[12pt]{article}

\addtolength{\textwidth}{2cm} \addtolength{\hoffset}{-1cm}
\addtolength{\textheight}{2.8cm} \addtolength{\voffset}{-1.4cm}
\usepackage{amsthm}
\usepackage{amscd}
\usepackage{amsfonts}
\usepackage{amssymb}
\usepackage{amsgen}
\usepackage{amsmath}
\usepackage{amsopn}
\usepackage{verbatim}
\usepackage{xypic}
\usepackage{xspace}
\usepackage{multicol}
\usepackage{url}
\usepackage{upref}

\theoremstyle{plain}
\newtheorem{thm}{Theorem}[section]
\newtheorem{lem}[thm]{Lemma}
\newtheorem{prop}[thm]{Proposition}
\newtheorem{cor}[thm]{Corollary}

\theoremstyle{definition}

\newtheorem{rem}[thm]{Remark}


 \font\cyr=wncyr10
 \newcommand{\nc}{\newcommand}

\DeclareMathOperator{\Sym}{{Sym}}

\nc{\per}[1]{\underset{#1}{\boldsymbol \pi}\,}

 \nc{\MT}{{\rm MT}}
 \nc{\XX}{{X}}
 \nc{\gF}{{\varPhi}}
 \nc{\ot}{\otimes}

 \nc{\evalE}{{\mathfrak Z}}
 \nc{\tG}{\tilde{G}}
 \nc{\wht}{\widehat}
 \nc{\bwg}{{\bigwedge}}
 \nc{\wg}{{\wedge}}
 \nc{\mmu}{{\boldsymbol{\mu}}}
 \nc{\mal}{{{\scriptstyle \maltese}}}
 \nc{\fA}{{\mathfrak A}}
 \nc{\HH}{{\mathfrak H}}
 \nc{\ra}{\rightarrow}
 \nc{\ors}{{\bfs}}
 \nc{\orr}{{\bfr}}
 \nc{\os}{{\overset}}
 \nc{\G}{{\mathbb G}}
 \nc{\F}{{\mathbb F}}
 \nc{\Z}{{\mathbb Z}}
 \nc{\R}{{\mathbb R}}
 \nc{\N}{{\mathbb N}}
 \nc{\ZN}{{\mathbb Z_{\ge 0}}}
 \nc{\Q}{{\mathbb Q}}
 \nc{\C}{{\mathbb C}}
 \nc{\CP}{{\mathbb{CP}}}
 \nc{\Cnn}{{\mathbb C}_{\ge 0}}
 \nc{\Cp}{{\mathbb C}_{>0}}
 \nc{\MPV}{{\mathcal{MPV}}}
 
 \nc{\tB}{{\tilde B}}
 \nc{\tP}{{P}}
 \nc{\tQ}{{Q}}
\nc{\gemn}{{\mathfrak n}}
 \nc{\suf}{{\ast\,}}
 \nc{\sufq}{{\ast_q\,}}
 \nc{\gam}{{\gamma}}
 \nc{\gG}{{\Gamma}}
 \nc{\om}{{\omega}}
 \nc{\vep}{{\varepsilon}}
 \nc{\ga}{{\alpha}}
 \nc{\gl}{{\lambda}}
 \nc{\gb}{{\beta}}
 \nc{\gf}{{\varphi}}
 \nc{\gd}{{\delta}}
 \nc{\orgd}{{\vec \gd\,}}
 \nc{\gs}{{\sigma}}
 \nc{\gth}{{\theta}}
 \nc{\gx}{{\xi}}
 \nc{\gS}{{\Sigma}}

 \nc{\gk}{{\kappa}}
  \nc{\gz}{{\zeta}}
 \nc{\tgz}{{\tilde{\zeta}}}
 \nc{\gO}{{\Omega}}
 \nc{\sif}{{\mathcal S}}
 \nc{\gt}{{\tau}}
 \nc{\Lra}{\Longrightarrow}
 \nc{\lra}{\longrightarrow}
 \nc{\lmaps}{\longmapsto}
 \nc{\fS}{{\mathfrak S}}
 \nc{\DD}{{\mathfrak D}}
 \nc{\Llra}{\Longleftrightarrow}
 \nc{\ol}{\overline}
 \nc{\ola}{\overleftarrow}
 \nc{\lms}{\longmapsto}
 \nc{\cv}{{{\mathsf c}{\mathsf v}}}
 \nc{\zq}{{\zeta_q}}
 \nc\qup{{q\uparrow 1}}
 \nc{\us}{\underset}
 \nc{\tn}{{\tilde{n}}}
 \nc{\gD}{{\Delta}}
 \nc{\bi}{{\bf i}}
 \nc{\bfone}{{\bf 1}}

 \nc{\bfa}{{\bf a}}
 \nc{\bfb}{{\bf b}}
 \nc{\bfc}{{\bf c}}
 \nc{\bfd}{{\bf d}}
 \nc{\bfe}{{\bf e}}
 \nc{\bff}{{\bf f}}
 \nc{\bfg}{{\bf g}}
 \nc{\bfi}{{\bf i}}
 \nc{\bfj}{{\bf j}}
\nc{\obfi}{{\overrightarrow{\boldsymbol \imath}}}
\nc{\obfj}{{\overrightarrow{\boldsymbol \jmath}}}
\nc{\obfd}{{\overrightarrow{\bf d}}}
\nc{\veps}{{\varepsilon}}
 \nc{\bfn}{{\bf n}}
 \nc{\bfl}{{\bf l}}
 \nc{\bfk}{{\bf k}}
 \nc{\bfm}{{\bf m}}
 \nc{\bfo}{{\bf o}}
 \nc{\bfp}{{\bf p}}
 \nc{\bfq}{{\bf q}}
 \nc{\bfr}{{\bf r}}
 \nc{\bfs}{{\bf s}}
 \nc{\bft}{{\bf t}}
 \nc{\bfu}{{\bf u}}
 \nc{\bfv}{{\bf v}}
 \nc{\bfw}{{\bf w}}
 \nc{\bfx}{{\bf x}}
 \nc{\bfy}{{\bf y}}
 \nc{\bfz}{{\bf z}}
 \nc{\bfB}{{\bf B}}
 \nc{\bfP}{{\bf P}}
 \nc{\bfQ}{{\bf Q}}
 \nc{\bfY}{{\bf Y}}
 \nc{\bfgb}{{\boldsymbol \gb}}
 \nc{\bfga}{{\boldsymbol \ga}}
 \nc{\bfrho}{{\boldsymbol \rho}}
 \nc{\bfchi}{{\boldsymbol \chi}}
 \nc{\QX}{{\Q\langle \bfX\rangle}}
 \nc{\QY}{{\Q\langle \bfY\rangle}}
 \nc{\CX}{{\C\langle \bfX\rangle}}
 \nc{\CY}{{\C\langle \bfY\rangle}}
 \nc{\QXX}{{\Q\langle\!\langle \bfX\rangle\!\rangle}}
 \nc{\QYY}{{\Q\langle\!\langle \bfY\rangle\!\rangle}}
 \nc{\CXX}{{\C\langle\!\langle \bfX\rangle\!\rangle}}
 \nc{\CYY}{{\C\langle\!\langle \bfY\rangle\!\rangle}}

 \nc{\bbA}{{\mathbb A}}
 \nc{\bbB}{{\mathbb B}}
 \nc{\bbC}{{\mathbb C}}
 \nc{\bbD}{{\mathbb D}}
 \nc{\bbE}{{\mathbb E}}
 \nc{\bbF}{{\mathbb F}}
 \nc{\bbG}{{\mathbb G}}
 \nc{\bbH}{{\mathbb H}}
 \nc{\bbI}{{\mathbb I}}
 \nc{\bbJ}{{\mathbb J}}
 \nc{\bbK}{{\mathbb K}}
 \nc{\bbL}{{\mathbb L}}
 \nc{\bbM}{{\mathbb M}}
 \nc{\bbN}{{\mathbb N}}
 \nc{\bbO}{{\mathbb O}}
 \nc{\bbP}{{\mathbb P}}
 \nc{\bbQ}{{\mathbb Q}}
 \nc{\bbR}{{\mathbb R}}
 \nc{\bbS}{{\mathbb S}}
 \nc{\bbT}{{\mathbb T}}
 \nc{\bbU}{{\mathbb U}}
 \nc{\bbV}{{\mathbb V}}
 \nc{\bbW}{{\mathbb W}}
 \nc{\bbX}{{\mathbb X}}
 \nc{\bbY}{{\mathbb Y}}
 \nc{\bbZ}{{\mathbb Z}}
 \nc{\bba}{{\mathbb a}}
 \nc{\bbb}{{\mathbb b}}
 \nc{\bbc}{{\mathbb c}}
 \nc{\bbd}{{\mathbb d}}
 \nc{\bbe}{{\mathbb e}}
 \nc{\bbf}{{\mathbb f}}
 \nc{\bbg}{{\mathbb g}}
 \nc{\bbh}{{\mathbb h}}
 \nc{\bbi}{{\mathbb i}}
 \nc{\bbk}{{\mathbb k}}
 \nc{\bbl}{{\mathbb l}}
 \nc{\bbm}{{\mathbb m}}
 \nc{\bbn}{{\mathbb n}}
 \nc{\bbo}{{\mathbb o}}
 \nc{\bbp}{{\mathbb p}}
 \nc{\bbq}{{\mathbb q}}
 \nc{\bbr}{{\mathbb r}}
 \nc{\bbs}{{\mathbb s}}
 \nc{\bbt}{{\mathbb t}}
 \nc{\bbu}{{\mathbb u}}
 \nc{\bbv}{{\mathbb v}}
 \nc{\bbw}{{\mathbb w}}
 \nc{\bbx}{{\mathbb x}}
 \nc{\bby}{{\mathbb y}}
 \nc{\bbz}{{\mathbb z}}

 \nc{\MZV}{{\mathcal{MZV}}}
 \nc{\calA}{{\mathcal A}}
 \nc{\calB}{{\mathcal B}}
 \nc{\calC}{{\mathcal C}}
 \nc{\calD}{{\mathcal D}}
 \nc{\calE}{{\mathcal E}}
 \nc{\calF}{{\mathcal F}}
 \nc{\calG}{{\mathcal G}}
 \nc{\calH}{{\mathcal H}}
 \nc{\calI}{{\mathcal I}}
 \nc{\calJ}{{\mathcal J}}
 \nc{\calK}{{\mathcal K}}
 \nc{\calL}{{\mathcal L}}
 \nc{\calM}{{\mathcal M}}
 \nc{\calN}{{\mathcal N}}
 \nc{\calO}{{\mathcal O}}
 \nc{\calP}{{\mathcal P}}
 \nc{\calQ}{{\mathcal Q}}
 \nc{\calR}{{\mathcal R}}
 \nc{\calS}{{\mathcal S}}
 \nc{\calT}{{\mathcal T}}
 \nc{\calU}{{\mathcal U}}
 \nc{\calV}{{\mathcal V}}
 \nc{\calW}{{\mathcal W}}
 \nc{\calX}{{\mathcal X}}
 \nc{\calY}{{\mathcal Y}}
 \nc{\calZ}{{\mathcal Z}}
  \nc{\cala}{{\mathcal a}}
 \nc{\calb}{{\mathcal b}}
 \nc{\calc}{{\mathcal c}}
 \nc{\cald}{{\mathcal d}}
 \nc{\cale}{{\mathcal e}}
 \nc{\calf}{{\mathcal f}}
 \nc{\calg}{{\mathcal g}}
 \nc{\calh}{{\mathcal h}}
 \nc{\cali}{{\mathcal i}}
 \nc{\calj}{{\mathcal j}}
 \nc{\calk}{{\mathcal k}}
 \nc{\call}{{\mathcal l}}
 \nc{\calm}{{\mathcal m}}
 \nc{\caln}{{\mathcal n}}
 \nc{\calo}{{\mathcal o}}
 \nc{\calp}{{\mathsf p}}
 \nc{\calq}{{\mathcal q}}
 \nc{\calr}{{\mathcal r}}
 \nc{\cals}{{\mathcal s}}
 \nc{\calt}{{\mathcal t}}
 \nc{\calu}{{\mathcal u}}
 \nc{\calv}{{\mathcal v}}
 \nc{\calw}{{\mathcal w}}
 \nc{\calx}{{\mathcal x}}
 \nc{\caly}{{\mathcal y}}
 \nc{\calz}{{\mathcal z}}

 \nc{\frakA}{{\mathfrak A}}
 \nc{\frakB}{{\mathfrak B}}
 \nc{\frakC}{{\mathfrak C}}
 \nc{\frakD}{{\mathfrak D}}
 \nc{\frakE}{{\mathfrak E}}
 \nc{\frakF}{{\mathfrak F}}
 \nc{\frakG}{{\mathfrak G}}
 \nc{\frakH}{{\mathfrak H}}
 \nc{\frakI}{{\mathfrak I}}
 \nc{\frakJ}{{\mathfrak J}}
 \nc{\frakK}{{\mathfrak K}}
 \nc{\frakL}{{\mathfrak L}}
 \nc{\frakM}{{\mathfrak M}}
 \nc{\frakN}{{\mathfrak N}}
 \nc{\frakO}{{\mathfrak O}}
 \nc{\frakP}{{\mathfrak P}}
 \nc{\frakQ}{{\mathfrak Q}}
 \nc{\frakR}{{\mathfrak R}}
 \nc{\frakS}{{\mathfrak S}}
 \nc{\frakT}{{\mathfrak T}}
 \nc{\frakU}{{\mathfrak U}}
 \nc{\frakV}{{\mathfrak V}}
 \nc{\frakW}{{\mathfrak W}}
 \nc{\frakX}{{\mathfrak X}}
 \nc{\frakY}{{\mathfrak Y}}
 \nc{\frakZ}{{\mathfrak Z}}
 \nc{\fraka}{{\mathfrak a}}
 \nc{\frakb}{{\mathfrak b}}
 \nc{\frakc}{{\mathfrak c}}
 \nc{\frakd}{{\mathfrak d}}
 \nc{\frake}{{\mathfrak e}}
 \nc{\frakf}{{\mathfrak f}}
 \nc{\frakg}{{\mathfrak g}}
 \nc{\frakh}{{\mathfrak h}}
 \nc{\fraki}{{\mathfrak i}}
 \nc{\frakj}{{\mathfrak j}}
 \nc{\frakk}{{\mathfrak k}}
 \nc{\frakl}{{\mathfrak l}}
 \nc{\frakm}{{\mathfrak m}}
 \nc{\frakn}{{\mathfrak n}}
 \nc{\frako}{{\mathfrak o}}
 \nc{\frakp}{{\mathfrak p}}
 \nc{\frakq}{{\mathfrak q}}
 \nc{\frakr}{{\mathfrak r}}
 \nc{\fraks}{{\mathfrak s}}
 \nc{\frakt}{{\mathfrak t}}
 \nc{\fraku}{{\mathfrak u}}
 \nc{\frakv}{{\mathfrak v}}
 \nc{\frakw}{{\mathfrak w}}
 \nc{\frakx}{{\mathfrak x}}
 \nc{\fraky}{{\mathfrak y}}
 \nc{\frakz}{{\mathfrak z}}
 \nc{\so}{{\mathfrak so}}
 \nc{\sa}{{\mbox{{\scriptsize \cyr x}}}}
 \nc{\slfour}{{\mathfrak sl}_4}
 \nc{\one}{{\bf 1}}
 \nc{\zero}{{\bf 0}}
 \nc{\Qxy}{\Q\langle x,y\rangle}

 \nc{\barN}{{\ol{\mathbb N}}}

\begin{document}
\title{Restricted Sum Formula of Alternating Euler Sums}
\author{Jianqiang Zhao}
\date{}
\maketitle

\begin{center}
 {Department of Mathematics, Eckerd College, St. Petersburg, FL 33711}
\end{center}

\begin{abstract}
In this paper we study restricted sum formulas involving alternating Euler sums
which are defined by
\begin{equation*}
    \gz(s_1,\dots,s_d;\vep_1,\dots,\vep_d)=\sum_{n_1>\cdots>n_d\ge 1}
    \frac{\vep_1^{n_1}\cdots \vep_d^{n_d}}{ n_1^{s_1}\cdots n_d^{s_d}},
\end{equation*}
for all positive integers $s_1,\dots,s_d$ and  $\vep_1=\pm 1,\dots, \vep_d=\pm 1$
with $(s_1,\vep_1)\ne (1,1)$. We call $w=s_1+\cdots+s_d$ the weight and $d$ the depth.
When $\vep_j=-1$ we say the $j$th component is alternating.
We first consider Euler sums of the following special type:
\begin{equation*}
    \gx(2s_1,\dots,2s_d)=\gz(2s_1,\dots,2s_d;(-1)^{s_1},\dots,(-1)^{s_d}).
\end{equation*}
For $d\le n$, let $\Xi(2n,d)$ be the sum of all $\gx(2s_1,\dots, 2s_d)$
of fixed weight $2n$ and depth $d$. We derive a formula for $\Xi(2n,d)$
using the theory of symmetric functions established by Hoffman recently.
We also consider restricted sum formulas of Euler sums with fixed weight $2n$,
depth $d$ and fixed number $\ga$ of alternating components at even arguments. 
When $\ga=1$ or $\ga=d$
we can determine precisely the restricted sum formulas. For other $\ga$ 
we only treat the cases $d<5$ completely since the symmetric function 
theory becomes more and more unwieldy to work with when $\ga$ moves 
closer to $d/2$.
\end{abstract}

\section{Introduction}
The multiple zeta values (MZVs) $\gz(s_1,\dots,s_d)$ of depth $d$ are defined
by the iterated sum
\begin{equation*}
\gz(s_1,\dots,s_d)=\sum_{ k_1>\dots>k_d>0} k_1^{-s_1}\cdots k_d^{-s_d},
\end{equation*}
for all positive integers $s_1,\dots,s_d$ with $s_1\ge 2$. One of the
central problems on MZVs is to determine all the possible 
$\Q$-linear relations among them. Standard conjectures
in arithmetic algebraic geometry (see for e.g. \cite[Conjecture 4.2]{Goncharov1997})
imply that we only need to study such relations among MZVs of the same weight.
Such relations are commonly believed to be studied systematically
by Euler for double zeta functions while many new ones have been found
in recent years. Gangl, Kaneko and Zagier \cite{GKZ}  showed that
\begin{equation}\label{equ:GKZ}
\sum_{a,b>0, a+b=n}\gz(2a,2b)=\frac34\gz(2n).
\end{equation}
Such formulas with arguments running through only even integers
are called restricted sum formulas.
Shen and Cai \cite{ShenCai2012} generalized \eqref{equ:GKZ} to
multiple zeta values of fixed depth $d\le 4$ and fixed weight $2n$.
Hoffman \cite{Hoffman2012} recently extended this to arbitrary depths.
In \cite{ZTn}, the author obtained a similar result for multiple Hurwitz-zeta values
whose small depth cases were first studied by Shen and Cai \cite{ShenCai2011}.

Higher level analogs of MZVs are given by special values of multiple polylogarithms
at $N$-th roots of unity (see \cite{DeligneGo2005,Zhao2010b}). 
These values are complex values in
general when $N>2$. In level $N=1$ we recover MZVs and in level $N=2$ we obtain
(alternating) Euler sums which are defined as follows.
For all positive integers $s_1,\dots,s_d$ and $\vep_1=\pm 1,\dots, \vep_d=\pm 1$
with $(s_1,\vep_1)\ne (1,1)$ 
\begin{equation*}
    \gz(s_1,\dots,s_d;\vep_1,\dots,\vep_d)=\sum_{n_1>\cdots>n_d\ge 1}
    \frac{\vep_1^{n_1}\cdots \vep_d^{n_d}}{ n_1^{s_1}\cdots n_d^{s_d}}.
\end{equation*}
Following the convention we call $w=s_1+\cdots+s_d$ the weight and $d$ the depth. 
If $\vep_j=-1$ for some $j$ we say the $j$th component is alternating
and put a bar over $s_j$ by which we can suppress $\vep_j$. 
For example, it is fairly easy to see that
\begin{equation}\label{equ:olzeta}
\gz(\ol{n})+\gz(n)=\sum_{j=1}^\infty \frac{(-1)^n+1}{j^n}=2^{1-n}\gz(n) \Longrightarrow\gz(\ol{n})=(2^{1-n}-1)\gz(n).
\end{equation}

We shall derive a few different types of restricted sum formulas for
alternating Euler sums. First we treat a very special type.
For all positive integers $s_1,\dots,s_d$ we set
\begin{align*}
    \gx(2s_1,\dots,2s_d)=&\gz(2s_1,\dots,2s_d;(-1)^{s_1},\dots,(-1)^{s_d})\\
    =&\sum_{n_1>\cdots>n_d\ge 1}
    \frac{(-1)^{s_1n_1}\cdots (-1)^{s_dn_d}}{ n_1^{2s_1}\cdots n_d^{2s_d}}.
\end{align*}
For example,
\begin{equation*}
    \gx(\{-2\}^d)=\gz(\{\ol{2}\}^d), \quad \gx(\{-4\}^d)=\gz(\{4\}^d).
\end{equation*}
Here for any string $S$ we denote by $S^d$ the string obtained by repeating $S$
exactly $d$ times. Put
\begin{equation*}
\Xi(2n,d)=\sum_{\substack{j_1+\cdots+j_d=n\\ j_1,\dots,j_d > 0}}
\gx(2j_1,\dots,2j_d).
\end{equation*}
For example,
$$\Xi(6,2)=\gx(2,4)+\gx(4,2)=\gz(\ol{2},4)+\gz(4,\ol{2}).$$
It turns out that the formula for $\Xi(2n,d)$ is more complicated
than either of the two in \cite{Hoffman2012,ZTn} for MZVs and multiple
Hurwitz-zeta values respecitively.
In fact, in a sense it combines the two.

To state our main results concerning $\Xi(2n,d)$ we recall that the Bernoulli
numbers $B_j$ and Euler numbers $E_j$
are defined by the following generating functions respectively:
\begin{equation} \label{equ:EulerNumber}
 \frac{x}{e^x-1}=\sum_{j=0}^\infty \frac{B_j}{j!}x^j, \qquad
 \sec x=\sum_{j=0}^\infty (-1)^{j} \frac{E_{2j}}{ (2j)!}x^{2j},
\end{equation}
and $E_{2j+1}=0$ for all $j\ge 0$.
\begin{thm} \label{thm:Xi2nd}
For all positive integers $d\le n$,
\begin{align*}
\Xi(2n,d)=& \sum_{j=0}^{\lfloor\frac{d-1}{2}\rfloor}
 \frac{(-1)^{\lfloor\frac{j}{2}\rfloor}\pi^{2j}\tilde{\gz}_n(2n-2j)}{2^{2d-j-2}(2j+1)!} \binom{2d-2j-1}{d} ,\\
+&\sum_{j=0}^{\lfloor\frac{d-2}4\rfloor}  \left(\sum_{\substack{r+s=n-2j\\ r,s\ge 0}}
\frac{(-1)^{r} (4^r-1)}{4^{n-2j}}  \gz(2r)  \gz(2s) \right)
\frac{(-1)^j \pi^{4j} }{2^{2d-2j-5}(4j+2)!}
\binom{2d-4j-2}{d},
\end{align*}
where $\tilde{\gz}_n(m)=\gz(m)$ if $n$ is even and 
$\tilde{\gz}_n(m)=\gz(\ol{m})=(2^{1-m}-1)\gz(m)$ if $n$ is odd.
In particular, $\Xi(2n,d)$ is a rational multiple of $\pi^{2n}$.
\end{thm}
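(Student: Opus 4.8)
The plan is to exploit the generating-function machinery for quasi-symmetric functions developed by Hoffman, which is the natural home for sums of the type $\Xi(2n,d)$.  The key observation is that $\gx(2s_1,\dots,2s_d)$ has a sign $(-1)^{s_jn_j}$ attached to each summation index, so the relevant specialization is not the usual power sum $p_k=\sum x_i^k$ but a signed variant.  Concretely, I would consider the alphabet of variables $x_i=1/i$ and introduce the generating series $\prod_{i\ge 1}\bigl(1+\text{(signed terms)}\bigr)$ whose coefficient extraction produces $\sum_d \Xi(2n,d) t^d$.  The sign pattern $(-1)^{s_jn_j}$ means that the even-argument generating function factors through both $\cos$ (coming from the alternating, $s_j$-odd contributions) and $\cosh$ or $\sin/\sinh$ type factors (from the non-alternating, $s_j$-even contributions), which is precisely the reason the statement asserts that $\Xi(2n,d)$ ``combines'' the MZV formula of \cite{Hoffman2012} and the Hurwitz-type formula of \cite{ZTn}.

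First I would set up the infinite-product generating function.  Using the classical Euler product expansions
\begin{equation*}
\frac{\sin(\pi z)}{\pi z}=\prod_{i\ge 1}\Bigl(1-\frac{z^2}{i^2}\Bigr),\qquad
\cos(\pi z)=\prod_{i\ge 1}\Bigl(1-\frac{z^2}{(i-1/2)^2}\Bigr),
\end{equation*}
and their variants with $z\to iz$ giving $\sinh,\cosh$, I would separate the contribution of even and odd $s_j$.  The factor $(-1)^{s_jn_j}=1$ when $s_j$ is even and $=(-1)^{n_j}$ when $s_j$ is odd, so summing $\sum_{s_j\ge 1}(\pm 1)/n_j^{2s_j}t$ over the two parities gives a rational function in $1/n_j^2$ that, after taking the product over all $i$ and extracting the degree-$d$ part in $t$, expresses $\sum_d\Xi(2n,d)t^d$ as a product of two trigonometric/hyperbolic generating functions.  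Taking logarithmic derivatives converts the product into a sum over the power-sum symmetric functions $\tilde\gz_n(2k)$ and the convolution $\sum_{r+s}\gz(2r)\gz(2s)$ that appear in the theorem; the two sums in the statement correspond exactly to the ``odd part'' and the ``even part'' of this splitting.

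Next I would extract the weight-$2n$, depth-$d$ coefficient explicitly.  This amounts to writing the relevant generating function as $A(t)\,B(t)$ where one factor is built from $\sin/\cos$ and the other from $\sinh/\cosh$, expanding each in terms of Bernoulli and Euler numbers via \eqref{equ:EulerNumber}, and collecting.  The binomial coefficients $\binom{2d-2j-1}{d}$ and $\binom{2d-4j-2}{d}$ arise from counting how the $d$ parts distribute among the two types of factors, i.e.\ from a Vandermonde-type contraction in the coefficient extraction; the powers $\pi^{2j}$ and $\pi^{4j}$ together with the factorials $(2j+1)!$ and $(4j+2)!$ come directly from the Taylor coefficients of the trigonometric factors.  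The final assertion that $\Xi(2n,d)$ is a rational multiple of $\pi^{2n}$ then follows immediately because every term in both sums is a product of even zeta values $\gz(2k)$ (each a rational multiple of $\pi^{2k}$) with an explicit power of $\pi$, and the total weight is always $2n$.

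The main obstacle I anticipate is the careful bookkeeping of signs and the correct identification of the two binomial coefficients.  Getting the parity split right—so that $\tilde\gz_n(2n-2j)$ switches between $\gz$ and $\gz(\ol{\,\cdot\,})$ according to the parity of $n$—requires tracking which global sign $(-1)^{\sum s_j n_j}$ survives after summing over all compositions, and this is exactly where the dependence on $n\bmod 2$ enters.  I expect the cleanest route is to prove a generating-function identity of the form $\sum_{d\ge 0}\Xi(2n,d)t^d=[\text{coefficient of }u^{2n}]\,F(t,u)$ for an explicit $F$ expressible through $\sin,\cos,\sinh,\cosh$, and then to match Taylor coefficients; verifying this identity rigorously, rather than the subsequent (routine) coefficient extraction, is the real content of the proof.
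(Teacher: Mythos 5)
Your overall strategy coincides with the paper's: realize $\Xi(2n,d)$ as the image of a symmetric function under the evaluation $x_j\mapsto(-1)^j/j^2$, and compute the two-variable generating function $\phi(u,v)$ as a ratio of $\sin\cdot\cosh$ factors. This is exactly Theorem \ref{thm:gfun}, and your Euler-product setup would deliver it: the closed form for $\gz(\{\ol{2}\}^n)$ gives $\calE(E(4u^2))=\sinh(\pi u)\cos(\pi u)/(\pi u)$, and Hoffman's identity $\calF(u,v)=E((v-1)u)H(u)$ does the rest. The final claim that $\Xi(2n,d)\in\Q\,\pi^{2n}$ also follows as you say.

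There is, however, a genuine gap precisely where you declare the coefficient extraction ``routine.'' The hard step is extracting the coefficient of $v^d$ from $\phi(4u^2,v)=g(\pi u\sqrt{1-v})/g(\pi u)$ with $g(y)=\sin y\cosh y/y$. After expanding $(1-v)^j$ one is left with $\tG_d(u)=\frac{1}{f(x)}\cdot\frac{(-x)^d}{d!}\,D^d f(x)$ for $f(x)=g(\sqrt{x})$, $x=(\pi u)^2$, and one must show that $D^d f$ is an explicit combination of $\cos\sqrt{x}\cosh\sqrt{x}$, $\sin\sqrt{x}\sinh\sqrt{x}$, $\cos\sqrt{x}\sinh\sqrt{x}$ and $\sin\sqrt{x}\cosh\sqrt{x}$ with four families of polynomial coefficients $X_d,Y_d,Z_d,W_d$. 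The paper's Lemma \ref{lem:solveDE} produces these polynomials and verifies them against a coupled recursive system of four differential equations; the binomial coefficients $\binom{2d-2j-1}{d}$ and $\binom{2d-4j-2}{d}$ in the theorem are exactly the coefficients of $X_d$, $Y_d$, $Z_d$, and they do not arise from a ``Vandermonde-type contraction'' of two independently expanded Bernoulli/Euler series. Your sketch supplies no mechanism for obtaining them, so you could not even write down the candidate formula, let alone prove it. You have also inverted which half is the real content: the generating-function identity you flag as the crux is the easy half given Hoffman's framework, while the $v^d$-extraction is where the work lies. Finally, the parity-of-$n$ dependence in $\tilde{\gz}_n$ does not come from tracking a global sign $(-1)^{\sum s_jn_j}$ in the original sum; it enters only at the last step, from the expansions of $\sqrt{x}\,(\cot\sqrt{x}\pm\tanh\sqrt{x})$, which interleave $\gz(2m)$ and $\gz(\ol{2m})$ according to $m\bmod 2$.
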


For example,
\begin{align*}
\Xi(2n,2)=&\frac{3\tilde{\gz}_n(2n)}{4}
+ \sum_{\substack{r+s=n\\ r,s\ge 0}}
\frac{(-1)^{r} (4^r-1)}{4^n}  \gz(2r)  \gz(2s),\\
\Xi(2n,3)=&\frac{5\tilde{\gz}_n(2n)}{8}+\frac{\gz(2)\tilde{\gz}_n(2n-2)}{8}
+ \sum_{\substack{r+s=n\\ r,s\ge 0}}
\frac{(-1)^{r} (4^r-1)}{2^{2n-1}}  \gz(2r)  \gz(2s) . 
\end{align*}
The key to the proof of Theorem~\ref{thm:Xi2nd} is to study the
generating function of $\Xi(2n,d)$  
$$\phi(u,v) =1+\sum_{n\ge d\ge 1} \Xi(2n,d)u^n v^d$$
for which we have the following result.
\begin{thm} \label{thm:gfun}
We have
\begin{equation*}
\phi(u,v)= \frac{ \sin(\pi\sqrt{(1-v)u}/2)}{ \sqrt{1-v} \sin(\pi\sqrt{u}/2)}
\frac{ \cosh(\pi\sqrt{(1-v)u}/2)}{ \cosh(\pi\sqrt{u}/2)}.
\end{equation*}
\end{thm}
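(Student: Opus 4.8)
The plan is to realize each $\gx(2s_1,\dots,2s_d)$ as a monomial quasi-symmetric function specialized at a single sequence of variables, and then to recognize the sum over all compositions of fixed depth as an elementary symmetric function. Concretely, set $X_n=(-1)^n/n^2$ for $n\ge 1$. Since $(-1)^{s_jn_j}/n_j^{2s_j}=(X_{n_j})^{s_j}$, we have
\begin{equation*}
\gx(2s_1,\dots,2s_d)=\sum_{n_1>\cdots>n_d\ge 1} X_{n_1}^{s_1}\cdots X_{n_d}^{s_d},
\end{equation*}
so that $\gx(2s_1,\dots,2s_d)$ is the monomial quasi-symmetric function $M_{(s_1,\dots,s_d)}$ evaluated at $(X_1,X_2,\dots)$. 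This is precisely the viewpoint underlying Hoffman's symmetric-function approach, and it converts the arithmetic problem into an algebraic identity among power series.

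Next I would package the weight generating series at fixed depth. For fixed $d$, summing $u^n\gx(2j_1,\dots,2j_d)$ over all compositions $j_1+\cdots+j_d=n$ with $j_i\ge 1$ and interchanging the (absolutely convergent, for small $|u|$) summations gives
\begin{equation*}
\sum_{n\ge d}\Xi(2n,d)\,u^n=\sum_{n_1>\cdots>n_d\ge 1}\prod_{i=1}^d\frac{uX_{n_i}}{1-uX_{n_i}} = e_d(t_1,t_2,\dots),\qquad t_n:=\frac{uX_n}{1-uX_n},
\end{equation*}
where $e_d$ is the $d$-th elementary symmetric function. Multiplying by $v^d$ and summing over $d\ge 0$ (the $d=0$ term contributing the constant $1$) then yields the product expansion
\begin{equation*}
\phi(u,v)=\sum_{d\ge 0}e_d(t_1,t_2,\dots)\,v^d=\prod_{n\ge 1}\bigl(1+v\,t_n\bigr)=\prod_{n\ge 1}\frac{1-(1-v)uX_n}{1-uX_n}=\frac{F((1-v)u)}{F(u)},
\end{equation*}
where $F(z)=\prod_{n\ge 1}\bigl(1-z(-1)^n/n^2\bigr)$.

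Finally I would evaluate $F(z)$ in closed form by splitting the product over even and odd $n$. The even terms $n=2m$ give $\prod_{m\ge 1}(1-z/4m^2)$, which by the Weierstrass factorization of $\sin$ equals $2\sin(\pi\sqrt{z}/2)/(\pi\sqrt{z})$; the odd terms $n=2m-1$ give $\prod_{m\ge 1}(1+z/(2m-1)^2)$, which by the factorization of $\cos$ (after $y\mapsto iy$) equals $\cosh(\pi\sqrt{z}/2)$. Hence $F(z)=2\sin(\pi\sqrt{z}/2)\cosh(\pi\sqrt{z}/2)/(\pi\sqrt{z})$, and substituting $z=(1-v)u$ and $z=u$ into $F((1-v)u)/F(u)$ cancels the $\pi$ while the ratio $\sqrt{u}/\sqrt{(1-v)u}=1/\sqrt{1-v}$ produces the prefactor, giving exactly the claimed expression.

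The computation itself is mechanical; the steps that require care are the justification of the interchange of summation and the convergence of the infinite products (both valid for $u,v$ near the origin, since $t_n=O(n^{-2})$), and the correct bookkeeping of the even/odd split so that the two transcendental factors and the prefactor $1/\sqrt{1-v}$ emerge with the right arguments. The one genuinely substantive point is the initial identification $\gx(2s_1,\dots,2s_d)=M_{(s_1,\dots,s_d)}(X_1,X_2,\dots)$ with $X_n=(-1)^n/n^2$: once the alternating sign $(-1)^{s_jn_j}$ is absorbed into a single variable, the remainder is a standard symmetric-function manipulation.
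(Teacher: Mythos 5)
Your proof is correct and follows essentially the same route as the paper: both realize $\phi(u,v)$ as the evaluation of $E((v-1)u)H(u)$ at $x_j\mapsto(-1)^j/j^2$ (your $F((1-v)u)/F(u)$ is exactly this product) and then obtain the closed form by splitting the infinite product over even and odd indices. The only difference is cosmetic --- you re-derive Hoffman's identity $\calF(u,v)=E((v-1)u)H(u)$ inline via elementary symmetric functions in $t_n=uX_n/(1-uX_n)$ and evaluate $F$ directly, whereas the paper cites \cite[Lemma 1]{Hoffman2012} and routes the product evaluation through the computation of $\gz(\{\ol{2}\}^n)$.
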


Another formula for $\Xi(2n,d)$ 
is more useful computationally when $d$ is close to $n$.
\begin{thm} \label{thm:longform}
For $d\le n$ we have
\begin{equation}  \label{equ:longform}
\Xi(2n,d)= \sum_{\ell =0}^{n-d}\binom{n-\ell}{d}
\frac{2(-1)^{d+\lfloor (\ell-n-1)/2\rfloor} }{2^{n+\ell}(2n-2\ell+1)!} \sum_{j=0}^\ell \gz(\ol{2\ell-2j})\frac{E_{2j}}{(2j)!}\pi^{2n-2\ell+2j}.
\end{equation}
\end{thm}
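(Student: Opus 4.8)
The plan is to read off the coefficients of the generating function $\phi(u,v)$ of Theorem~\ref{thm:gfun} in a manner adapted to large depth, by splitting $\phi$ into its $v$-independent ``denominator'' and its $v$-dependent ``numerator.'' Write $\phi(u,v)=N(u,v)\,D(u)$, where
\begin{equation*}
D(u)=\frac{1}{\sin(\pi\sqrt{u}/2)\,\cosh(\pi\sqrt{u}/2)},\qquad
N(u,v)=\frac{\sin(\pi\sqrt{(1-v)u}/2)\,\cosh(\pi\sqrt{(1-v)u}/2)}{\sqrt{1-v}}.
\end{equation*}
Since $D$ is free of $v$, we have $\Xi(2n,d)=[u^nv^d]\phi=[u^n]\bigl(([v^d]N)\,D\bigr)$, so the two factors may be expanded independently and then combined.

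First I would expand $D(u)$ as a Cauchy product of two classical series. From the cotangent expansion $\sum_{k\ge1}\gz(2k)x^{2k}=\tfrac12(1-\pi x\cot\pi x)$, the relation $\gz(\ol{2k})=(2^{1-2k}-1)\gz(2k)$, and the half-angle identity $\cot(\pi x/2)+\tan(\pi x/2)=2/\sin(\pi x)$, one checks the clean generating identity
\begin{equation*}
\sum_{k\ge 0}\gz(\ol{2k})\,x^{2k}=-\frac{\pi x}{2\sin(\pi x)}\qquad\bigl(\gz(\ol 0):=-\tfrac12\bigr).
\end{equation*}
Combined with $1/\cosh(\pi\sqrt u/2)=\sum_{j\ge0}\tfrac{E_{2j}}{(2j)!}(\pi^2u/4)^j$, this yields
\begin{equation*}
D(u)=-\frac{4}{\pi\sqrt u}\sum_{\ell\ge0}\frac{G_\ell}{4^\ell}\,u^\ell,\qquad
G_\ell:=\sum_{j=0}^\ell\gz(\ol{2\ell-2j})\frac{E_{2j}}{(2j)!}\pi^{2j},
\end{equation*}
so that the inner sum of \eqref{equ:longform} is exactly $\pi^{2n-2\ell}G_\ell$.

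Next I would expand $N(u,v)$. The product-to-sum identity $\sin z\,\cosh z=\Re\bigl[\sin((1+i)z)\bigr]$ (with $i=\sqrt{-1}$), together with the computation $\Re[(1+i)i^m]=(-1)^{\lfloor(m+1)/2\rfloor}$, which when combined with the $(-1)^m$ from the sine series collapses to $(-1)^{\lfloor m/2\rfloor}$, gives
\begin{equation*}
\sin z\,\cosh z=\sum_{m\ge0}\frac{(-1)^{\lfloor m/2\rfloor}2^m}{(2m+1)!}\,z^{2m+1}.
\end{equation*}
Setting $z=\pi\sqrt{(1-v)u}/2$ and dividing by $\sqrt{1-v}$, the half-integer power of $(1-v)$ cancels, leaving $N(u,v)=\sum_{m\ge0}c_m(1-v)^m u^{m+1/2}$ with $c_m=(-1)^{\lfloor m/2\rfloor}\pi^{2m+1}/\bigl((2m+1)!\,2^{m+1}\bigr)$. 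The binomial theorem then gives $[v^d](1-v)^m=(-1)^d\binom md$, which is the source of the factor $\binom{n-\ell}{d}$ once we set $m=n-\ell$.

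Finally I would multiply $([v^d]N)$ by $D$ and extract $[u^n]$: the half-integer powers $u^{m+1/2}$ and $u^{-1/2}$ combine to the integer power $u^{m+\ell}$, and collecting $m+\ell=n$ (so $0\le\ell\le n-d$) produces the double sum of \eqref{equ:longform} up to the overall sign. The one genuinely delicate point is the sign bookkeeping: the expansion yields the exponent $d+1+\lfloor(n-\ell)/2\rfloor$, and I would close the argument with the elementary floor-parity check $1+\lfloor(n-\ell)/2\rfloor\equiv\lfloor(\ell-n-1)/2\rfloor\pmod 2$ to match the exponent $d+\lfloor(\ell-n-1)/2\rfloor$ in the statement. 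Thus the main obstacle is not any single hard estimate but the consistent tracking of the factor $(-1)^{\lfloor m/2\rfloor}$ arising from $\Re[(1+i)i^m]$ through the substitution and the two coefficient extractions; the auxiliary generating-function identities are routine but must be normalized carefully, in particular via the convention $\gz(\ol 0)=-\tfrac12$.
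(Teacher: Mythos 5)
Your argument is correct and is in substance the paper's own proof: the paper likewise reads \eqref{equ:longform} off the factorization of $\phi(u,v)$ in Theorem~\ref{thm:gfun}, obtaining your series for the $v$-independent factor (your $G_\ell$, with $\gz(\ol 0)=-1/2$) by setting $v=1$ in \eqref{equ:Thn}, and your expansion of $\sin z\cosh z$ from Lemma~\ref{lem:Tnn}. The only difference is packaging: where you expand $(1-v)^{n-\ell}$ by the binomial theorem and take a Cauchy product of the two series, the paper invokes Hoffman's symmetric-function identity $N_{n,d}=\sum_{\ell=0}^{n-d}\binom{n-\ell}{d}(-1)^{n-d-\ell}h_\ell e_{n-\ell}$ and applies the evaluation map $\calE$ --- the same computation in different notation.
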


The second type restricted sum formula for Euler sums has the form
\begin{equation*}
A_\ga(2n,d) :=\sum_{\substack{j_1+\cdots+j_d=n,\,j_1,\dots,j_d > 0
\\ \sharp\{\ell:\  \vep_\ell=-1\}=\ga}}
 \gz(s_1,\dots,s_d;\vep_1,\dots,\vep_d).
\end{equation*}
Note that $\ga$ counts the number of alternating components.
When $\ga=1$ or $\ga=d$ we shall determine the restricted sum formulas
by using the theory of symmetric functions. 
\begin{thm}\label{thm:Aga=1dArbitrayIntro}
For every positive integers $d\le n$ we have
\begin{equation}\label{equ:Aga=1dArbitray}
A_1(2n,d)=\gz(\ol{2n})-\sum_{j=0}^{\lfloor(d-2)/2 \rfloor}\sum_{k=0}^{2j+1} \frac{(-1)^{d+j+k}}{(2j+1)!}\binom{2j+1}{k}
\binom{(k-3)/2}{d-1} \gz(2n-2j)\pi^{2j}.
\end{equation}
\end{thm}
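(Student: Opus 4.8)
The plan is to encode $A_1(2n,d)$ in the two–variable generating function
\[
G(u,v)=\sum_{n\ge d\ge 1}A_1(2n,d)\,u^n v^d ,
\]
and to read off \eqref{equ:Aga=1dArbitray} by extracting $[u^nv^d]G$, working in Hoffman's algebra of symmetric functions in the variables $\{1/k^2\}_{k\ge1}$. Expanding each even slot $s_i=2j_i$ over $j_i\ge 1$ against the weight marker $u$, a non-alternating slot at the summation index $k$ contributes $a_k:=u/(k^2-u)$, while the single alternating slot (where $\vep=-1$, so the factor $\vep^k=(-1)^k$ is independent of $j_i$) contributes $(-1)^k a_k$. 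Marking depth by $v$, this gives
\[
G(u,v)=v\,E(v)\,S,\qquad E(v)=\prod_{k\ge1}\bigl(1+va_k\bigr),\qquad S=\sum_{k\ge1}\frac{(-1)^ka_k}{1+va_k},
\]
since $E(v)$ is the generating series of the elementary symmetric functions $e_d\{a_k\}$ and \emph{exactly one alternating slot} is the first $w$–derivative of $\prod_k(1+va_k+vw(-1)^ka_k)$ at $w=0$.

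Next I would put $E$ and $S$ in closed form. From $\sin(\pi\sqrt z)/(\pi\sqrt z)=\prod_{k}(1-z/k^2)$ one gets $E(v)=\sin(\pi\sqrt{u(1-v)})/(\sqrt{1-v}\,\sin(\pi\sqrt u))$, the $\sin$–factor of Theorem~\ref{thm:gfun}. Since $(-1)^ka_k/(1+va_k)=(-1)^ku/(k^2-u(1-v))$, the Mittag–Leffler expansion $\sum_{k}(-1)^k/(k^2-x^2)=1/(2x^2)-\pi/(2x\sin\pi x)$ yields
\[
S=\frac{1}{2(1-v)}-\frac{\pi\sqrt u}{2\sqrt{1-v}\,\sin(\pi\sqrt{u(1-v)})}.
\]
Multiplying, $G=G_1+G_2$ with $G_1=\dfrac{v\,\sin(\pi\sqrt{u(1-v)})}{2(1-v)^{3/2}\sin(\pi\sqrt u)}$ and $G_2=-\dfrac{\pi v\sqrt u}{2(1-v)\sin(\pi\sqrt u)}$. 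The term $G_2$ already produces the leading $\gz(\ol{2n})$: using $\pi\sqrt u/\sin(\pi\sqrt u)=1-2\sum_{k\ge1}\gz(\ol{2k})u^k$ and $v/(1-v)=\sum_{d\ge1}v^d$, one finds $[u^nv^d]G_2=\gz(\ol{2n})$ for every $d\ge1$.

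The core computation is the extraction from $G_1$. Substituting $w=1-\sqrt{1-v}$ (so $\sqrt{u(1-v)}=\sqrt u\,(1-w)$) and using the addition formula gives
\[
\frac{\sin(\pi\sqrt{u(1-v)})}{\sin(\pi\sqrt u)}=\cos(\pi\sqrt u\,w)-\cot(\pi\sqrt u)\,\sin(\pi\sqrt u\,w).
\]
Expanding $\cot$ through $\pi\sqrt u\cot(\pi\sqrt u)=1-2\sum_{k\ge1}\gz(2k)u^k$ and taking $[u^n]$, the $\cot\cdot\sin$ part contributes $2\sum_{j=0}^{n-1}\frac{(-1)^j\pi^{2j}w^{2j+1}}{(2j+1)!}\gz(2n-2j)$, the remaining \emph{diagonal} pieces being multiples of $w^{2n}$ and $w^{2n+1}$. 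Because $w=\tfrac v2+\cdots$ has order $v$, the factor $v/(1-v)^{3/2}$ times $w^{2n}$ or $w^{2n+1}$ has order $\ge v^{2n+1}$, hence contributes nothing to $[v^d]$ once $d\le n$; this is exactly where the hypothesis $d\le n$ is used. Extracting $[v^d]$ then reduces to $[v^{d-1}]\{(1-v)^{-3/2}(1-\sqrt{1-v})^{2j+1}\}$, which truncates the $j$–sum at $\lfloor(d-2)/2\rfloor$ since $(1-\sqrt{1-v})^{2j+1}$ vanishes to order $2j+1$; binomially expanding $(1-\sqrt{1-v})^{2j+1}=\sum_k\binom{2j+1}{k}(-1)^k(1-v)^{k/2}$ and using $[v^{d-1}](1-v)^{(k-3)/2}=(-1)^{d-1}\binom{(k-3)/2}{d-1}$ turns $[u^nv^d]G_1$ into the stated double sum. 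Adding the $\gz(\ol{2n})$ from $G_2$ gives \eqref{equ:Aga=1dArbitray}.

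I expect the main obstacle to be the half-integer-power bookkeeping in the last step: justifying the $w=1-\sqrt{1-v}$ substitution and the trigonometric rewriting as an identity of formal power series, and carefully checking that the diagonal $w^{2n},w^{2n+1}$ contributions vanish for $d\le n$ so that only the single zeta values $\gz(2n-2j)$ with $0\le j\le\lfloor(d-2)/2\rfloor$ survive. The set-up $G=vE(v)S$ and the closed forms for $E$ and $S$ are routine given Theorem~\ref{thm:gfun}; the delicate part is matching the precise generalized-binomial coefficients $\binom{(k-3)/2}{d-1}$.
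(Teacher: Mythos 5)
Your proposal is correct. The first half --- encoding $A_1$ as the $w$-derivative of $\prod_k(1+va_k+vw(-1)^ka_k)$ at $w=0$ with $a_k=u/(k^2-u)$, and evaluating $E(v)$ and $S$ in closed form --- is essentially the paper's proof of Theorem~\ref{thm:genFunPsi1} (the paper writes it as $\calF_1=E((v-1)u)H(u)\sum_k(-1)^kvux_k/(1+(v-1)ux_k)$ and evaluates via logarithmic derivatives of $f$ and $g$ rather than via the Mittag--Leffler expansion, but the outcome is identical). Where you genuinely diverge is the coefficient extraction. The paper combines the two terms of $\psi_1$ into $\frac{v\pi\sqrt u}{2\sin(\pi\sqrt u)}\sum_{j\ge1}\frac{(-1)^j\pi^{2j}u^j}{(2j+1)!}(1-v)^{j-1}$, realizes the $v^d$-coefficient as $\frac{(-u)^d}{d!}D^d$ applied to $\sin\sqrt u/(u\sqrt u)$, and is then forced into a recursive system of differential equations for polynomials $\tP_d,\tQ_d$, which it converts to a two-variable generating function, solves for the first few cases, guesses the pattern $p_j(y),q_j(y)$ involving $(1-y)^{(k-3)/2}$, and verifies it. Your route --- splitting off $G_2$ to produce $\gz(\ol{2n})$ directly, then applying the addition formula $\sin(\pi\sqrt u(1-w))=\sin(\pi\sqrt u)\cos(\pi\sqrt u\,w)-\cos(\pi\sqrt u)\sin(\pi\sqrt u\,w)$ with $w=1-\sqrt{1-v}$ --- reaches the same double sum with no differential equations and no pattern-guessing: the coefficient $\binom{(k-3)/2}{d-1}$ falls out of $[v^{d-1}](1-v)^{(k-3)/2}$ after the binomial expansion of $(1-\sqrt{1-v})^{2j+1}$, and your order-counting argument (the diagonal $w^{2n},w^{2n+1}$ pieces have $v$-order $\ge 2n+1>d$, and $w^{2j+1}$ forces $j\le\lfloor(d-2)/2\rfloor$) correctly isolates where the hypothesis $d\le n$ enters. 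I checked the signs: your extraction gives $[u^nv^d]G_1=-\sum_{j,k}\frac{(-1)^{d+j+k}}{(2j+1)!}\binom{2j+1}{k}\binom{(k-3)/2}{d-1}\gz(2n-2j)\pi^{2j}$, matching \eqref{equ:Aga=1dArbitray}, and the $d=2$ case reproduces \eqref{equ:Aga=1d=2}. The formal-series bookkeeping you flag as the main obstacle is unproblematic, since everything lives in $\Q[[u,v]]$ after the substitutions; your argument is, if anything, cleaner than the paper's.
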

The formula for $A_d(2n,d)$ is too complicated to write down here (see Theorem~\ref{thm:Aga=ddArbitray}). These two formulas are derived by
using the following information on their generating functions 
(see Theorem~\ref{thm:gfunTot} and Theorem~\ref{thm:genFunPsi1}).
\begin{thm}\label{thm:generatingFucIntro}
We have
\begin{align*}
\sum_{n\ge d\ge 1} A_1(2n,d) v^d u^n
=&\frac{v\sin(\pi \sqrt{(1-v)u})}{ \sqrt{1-v} \sin(\pi \sqrt{u})}
\left(\frac{1}{2(1-v)}- \frac{\pi\sqrt{u}}{2\sqrt{1-v} \sin( \pi\sqrt{(1-v)u})} \right)\\
1+\sum_{n\ge d\ge 1} A_d(2n,d) v^d u^n 
= &\frac{\sin( \pi \sqrt{(1-v)u}/2) }{ \sqrt{1-v}\sin(\pi\sqrt u/2)}
 \cdot\frac{\cos(\pi \sqrt{(v+1)u}/2)}{ \cos(\pi\sqrt u/2)} .
\end{align*}
\end{thm}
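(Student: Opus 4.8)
The plan is to realize each generating function as an infinite product, exactly in the spirit of the symmetric-function computation behind Theorem~\ref{thm:gfun}. The starting point is to sum over the (even) argument of each component first. Writing $a_k=\sum_{s\ge1}u^s/k^{2s}=u/(k^2-u)$ for a non-alternating slot and $b_k=\sum_{s\ge1}(-1)^k u^s/k^{2s}=(-1)^k u/(k^2-u)$ for an alternating slot, the nested sum $n_1>\cdots>n_d$ together with the depth weight $v$ collapses into a descending elementary-symmetric product. Tagging each alternating component with a bookkeeping variable $x$, I would set
\[
G(u,v,x):=\prod_{k\ge1}\Bigl(1+v a_k+vx\,b_k\Bigr)=1+\sum_{n\ge d\ge1}\sum_{\ga\ge0}A_\ga(2n,d)\,u^n v^d x^\ga,
\]
since expanding the product selects a set $n_1>\cdots>n_d$ of indices and marks the alternating subset by the power of $x$. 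The two cases in the theorem are then $[x^1]G$ and the top coefficient $[x^d]$ of the $v^d$-part.

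Next I would evaluate $G$ in closed form. Because $a_k+x b_k=u(1+(-1)^k x)/(k^2-u)$, the factors split by parity of $k$: even $k=2m$ contribute $(4m^2-u(1-v-vx))/(4m^2-u)$ and odd $k$ contribute $(k^2-u(1-v+vx))/(k^2-u)$. Applying the Euler factorizations $\prod_{m\ge1}(1-z^2/4m^2)=\sin(\tfrac\pi2 z)/(\tfrac\pi2 z)$ and $\prod_{k\ \mathrm{odd}}(1-z^2/k^2)=\cos(\tfrac\pi2 z)$ to numerator and denominator of each half yields
\[
G(u,v,x)=\frac{\sin\!\bigl(\tfrac\pi2\sqrt{u(1-v-vx)}\bigr)}{\sqrt{1-v-vx}\,\sin\!\bigl(\tfrac\pi2\sqrt u\bigr)}\cdot\frac{\cos\!\bigl(\tfrac\pi2\sqrt{u(1-v+vx)}\bigr)}{\cos\!\bigl(\tfrac\pi2\sqrt u\bigr)}.
\]
For $\ga=d$ every selected slot is alternating, so the relevant object is built from the $b_k$ alone; equivalently it is the diagonal of $G$ obtained by letting $v\to0$, $x\to\infty$ with $vx$ fixed, which replaces $1-v\mp vx$ by $1\mp v$ and produces precisely the stated formula for $1+\sum A_d(2n,d)v^du^n$.

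The $\ga=1$ case requires $[x^1]G=\partial_x G|_{x=0}$. At $x=0$ both radicands equal $1-v$, so writing $G=E(P)\,C(Q)$ for the sine- and cosine-quotients above with $P=1-v-vx$, $Q=1-v+vx$ and using $\partial_xP=-v$, $\partial_xQ=v$ gives $\partial_xG|_{x=0}=v\bigl(E C'-E'C\bigr)\big|_{1-v}$. The last step is the simplification of this expression at the common value $1-v$, and this is where the real work lies. Differentiating the two quotients, the terms proportional to $\sin^2+\cos^2$ of the half-angle collapse to $1$, while the surviving cross term is reorganized by $2\sin\theta\cos\theta=\sin2\theta$ and $2\sin(\tfrac\pi2\sqrt u)\cos(\tfrac\pi2\sqrt u)=\sin(\pi\sqrt u)$. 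I expect this trigonometric bookkeeping — keeping the powers of $\sqrt{1-v}$ straight — to be the only delicate part; it produces exactly $\tfrac{v}{2\sin(\pi\sqrt u)}\bigl[(1-v)^{-3/2}\sin(\pi\sqrt{(1-v)u})-\pi\sqrt u/(1-v)\bigr]$, which factors into the claimed $\frac{v\sin(\pi\sqrt{(1-v)u})}{\sqrt{1-v}\,\sin(\pi\sqrt u)}\bigl(\frac1{2(1-v)}-\frac{\pi\sqrt u}{2\sqrt{1-v}\sin(\pi\sqrt{(1-v)u})}\bigr)$. Throughout I would treat the identities as equalities of formal power series in $u,v$ (equivalently, analytic functions near the origin), so the infinite-product rearrangements need no further justification.
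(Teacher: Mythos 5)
Your proof is correct and is essentially the paper's own argument: your parity-split infinite product is exactly the paper's factorization $\calF_{\rm tot}(u,v)=E^e((v-1)u)E^o(-(v+1)u)E(-u)^{-1}$ evaluated under $\evalE(x_j)=1/j^2$ (with the sine and cosine Euler products of \eqref{equ:evalEvalEven}--\eqref{equ:evalEvalodd}), and your computation $\partial_x G|_{x=0}=v\bigl(EC'-E'C\bigr)$ at the common argument $1-v$ is the paper's logarithmic-derivative step $\tfrac{f'}{f}-\tfrac{g'}{g}$ together with $\coth(\gt/2)-\tanh(\gt/2)=2/\sinh\gt$ in only slightly different clothing. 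The one organizational difference is that you package everything into a single three-variable generating function $G(u,v,x)$ with $x$ marking the alternating slots and recover the two cases as $[x^1]G$ and the $vx$-diagonal, whereas the paper introduces two separate symmetric functions $M^{\rm tot}_{n,d}$ and $M^{1}_{n,d}$; your unification is a nice economy but does not change the substance of the proof.
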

We are able to compute $A_\ga(2n,d)$ for other $\ga$'s when the depth $d<5$ 
by using the stuffle relations (also called quasi-shuffle or harmonic shuffle relations)
satisfied by the alternating Euler sums. This method
can be pushed to larger depth cases by brutal force but it is clearly not
the ideal approach. It seems that the symmetric function theory becomes 
harder and harder to apply when $\ga$ moves closer and closer to $d/2$.
As a result, to find a computationally simple formula for arbitrarily
fixed $n$, $d$ and $\ga$ might require some new ideas.

This work was started while the the author was visiting Taida Institute for
Mathematical Sciences at National Taiwan University in the summer
of 2012. He would like to thank Prof. Jing Yu and Chieh-Yu Chang for
encouragement and their interest in his work. This work was partially supported
by NSF grant DMS-1162116.

\section{Proof of Theorem \ref{thm:gfun} and Theorem \ref{thm:longform}}
We first recall some results on symmetric functions contained in 
\cite{Hoffman2012,Macdonald} with some modification.
Let $\Sym$ be the subring of $\Q[\![x_1,x_2,\dots]\!]$ consisting
of the formal power series of bounded degree that are invariant
under permutations of the $x_j$. Define elements $e_j$, $h_j$, and $p_j$ in 
$\Sym$ by the generating functions (see \cite[\S 1.2]{Macdonald})
\begin{align*}
E(u)&=\sum_{j=0}^\infty e_ju^j=\prod_{j=1}^\infty (1+ux_j),\\
H(u)&=\sum_{j=0}^\infty h_ju^j=\prod_{j=1}^\infty \frac1{1-ux_j} = E(-u)^{-1},\\
P(u)&=\sum_{j=1}^\infty p_ju^{j-1}=\sum_{j=1}^\infty \frac{x_j}{1-ux_j}=
\frac{H'(u)}{H(u)} .
\end{align*}
Define an evaluation map (as a ring homomorphism) $\calE:\Sym\to\R$ such that
$\calE(x_j)=(-1)^j/j^2$ for all $i\ge 1$. Hence for all $n\ge 1$
$$\calE(p_n)=\gx(2n)=\sum_{j\ge 1}\frac{(-1)^{jn}}{j^{2n}}.$$

First we need a simple lemma.
\begin{lem} \label{lem:Tnn}
For any positive integer $n$ we have
\begin{equation}\label{equ:Tnn}
\gx(\{2\}^n)=\gz(\{\ol{2}\}^n)=\frac{(-1)^{\lfloor (n+1)/2\rfloor}\pi^{2n}}{2^n(2n+1)!}.
\end{equation}
\end{lem}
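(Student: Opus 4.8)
The plan is to identify $\gx(\{2\}^n)$ with the image under $\calE$ of the elementary symmetric function $e_n$, and then extract its value from the evaluated generating series $E(u)$. First I would observe that, since $e_n=\sum_{j_1>\cdots>j_n\ge 1}x_{j_1}\cdots x_{j_n}$ and $\calE(x_j)=(-1)^j/j^2$, applying the ring homomorphism $\calE$ gives exactly
\[
\calE(e_n)=\sum_{j_1>\cdots>j_n\ge 1}\frac{(-1)^{j_1}\cdots(-1)^{j_n}}{j_1^2\cdots j_n^2}=\gx(\{2\}^n),
\]
which also equals $\gz(\{\ol 2\}^n)$ straight from the definition of $\gx$. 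Applying $\calE$ coefficientwise to the generating function $E(u)=\prod_{j\ge 1}(1+ux_j)$ then yields the product
\[
\sum_{n\ge 0}\gx(\{2\}^n)\,u^n=\prod_{j=1}^\infty\Bigl(1+\frac{(-1)^j u}{j^2}\Bigr).
\]

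Next I would split the product according to the parity of $j$ and recognize the two classical Euler products. The factors with $j=2k$ give $\prod_{k\ge 1}\bigl(1+u/(4k^2)\bigr)=\sinh(\pi\sqrt u/2)\big/(\pi\sqrt u/2)$, while the factors with $j=2k-1$ give $\prod_{k\ge 1}\bigl(1-u/(2k-1)^2\bigr)=\cos(\pi\sqrt u/2)$. Hence
\[
\sum_{n\ge 0}\gx(\{2\}^n)\,u^n=\frac{2}{\pi\sqrt u}\,\sinh\Bigl(\frac{\pi\sqrt u}{2}\Bigr)\cos\Bigl(\frac{\pi\sqrt u}{2}\Bigr).
\]

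The last and most delicate step is to read off a closed form for the coefficient of $u^n$ from a product of $\sinh$ and $\cos$; a naive Cauchy convolution would leave an awkward alternating binomial sum, so the real work is to repackage the product. I would do this by setting $x=\pi\sqrt u/2$ and using the elementary identity $\sinh x\cos x=\re\sinh\bigl((1+i)x\bigr)$. Expanding $\sinh\bigl((1+i)x\bigr)=\sum_{m\ge 0}\bigl((1+i)x\bigr)^{2m+1}/(2m+1)!$ and using $(1+i)^{2m+1}=(1+i)(2i)^m$ reduces everything to the sign computation $\re\bigl[(1+i)i^m\bigr]=(-1)^{\lfloor(m+1)/2\rfloor}$, whose four-periodic pattern $1,-1,-1,1$ is precisely what produces the stated floor exponent. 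Substituting $x^{2m}=\pi^{2m}u^m/4^m$ and dividing by $x$ gives the series $\sum_{m\ge0}(-1)^{\lfloor(m+1)/2\rfloor}\pi^{2m}u^m/\bigl(2^m(2m+1)!\bigr)$ after the cancellation $2^m/4^m=2^{-m}$, and reading off the coefficient of $u^n$ yields
\[
\gx(\{2\}^n)=\frac{(-1)^{\lfloor(n+1)/2\rfloor}\pi^{2n}}{2^n(2n+1)!},
\]
as claimed. The main obstacle is thus not the symmetric-function bookkeeping but the coefficient extraction, and the complexification trick $\sinh x\cos x=\re\sinh((1+i)x)$ is the device that converts it into a one-line parity argument.
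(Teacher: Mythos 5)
Your proof is correct and follows essentially the same route as the paper's: both split the Euler product $\prod_j\bigl(1+(-1)^j u/j^2\bigr)$ by parity to obtain $\sinh(\pi\sqrt u/2)\cos(\pi\sqrt u/2)\big/(\pi\sqrt u/2)$ and then extract coefficients by complexifying (your $\re\sinh((1+i)x)$ is exactly the paper's four-exponential expansion with $(1\pm i)^2=\pm 2i$). The only cosmetic difference is that you quote the classical cosine product directly, while the paper derives it from the sine product via the double-angle identity.
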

\begin{proof}
It is easy to see that
 \begin{align*}
1+\sum_{n=1}^\infty \gz(\{\ol{2}\}^n) x^{2n}=&
\prod_{j=1}^{\infty} \left (1+ \frac{x^2}{(2j)^2} \right)
\prod_{j=1}^{\infty} \left (1-\frac{x^2}{(2j-1)^2} \right)  \\
=& \prod_{j=1}^{\infty} \left (1+ \frac{(x/2)^2}{j^2} \right)
\prod_{j=1}^{\infty} \left (1-\frac{x^2}{j^2} \right) /
\prod_{j=1}^{\infty} \left (1-\frac{x^2}{(2j)^2} \right)   \\
=& \frac{\sinh(\pi x/2)}{\pi x/2} \cdot  \frac{\sin(\pi x)}{\pi x}
\cdot \frac{\pi x/2} {\sin(\pi x/2)}\\
=& \frac{\sinh(\pi x/2)}{\pi x/2} \cos(\pi x/2) \\
=& \frac{e^{(1+i)y}-e^{-(1+i)y}+e^{(1-i)y}-e^{-(1-i)y} }{4y} \quad (\text{set }y=\pi x/2)\\
=& \frac12\sum_{n=0}^\infty  \frac{(1+i)^{2n+1}+(1-i)^{2n+1}}{(2n+1)!} y^{2n} .
 \end{align*}
Hence the lemma follows from a simple computation using the fact that $(1+i)^2=2i$
and $(1-i)^2=-2i$.
\end{proof}

Now let
$N_{n,d}$ be the sum of all the monomial symmetric functions
corresponding to partitions of $n$ having length $d$.
Then clearly
$$\calE(N_{n,d})=\Xi(2n,d).$$
As in \cite{Hoffman2012} we may define
\begin{equation*}
     \calF(u,v)=1+\sum_{n\ge d\ge 1}N_{n,d}u^nv^d ,
\end{equation*}
then $\calE$ sends $\calF(u,v)$ to the generating function
$$\phi(u,v) =1+\sum_{n\ge d\ge 1} \Xi(2n,d)u^nv^d .$$
By Lemma \ref{lem:Tnn} we have
\begin{equation*}
\calE(E(4u^2))= \frac{\sinh(\pi u)}{\pi u} \cos(\pi u) ,
\end{equation*}
and
\begin{equation*}
\calE(H(4u^2))=\calE(E(-4u^2)^{-1})=\frac{\pi u}{\sin(\pi u)} \frac{1}{\cosh(\pi u)} .
\end{equation*}
Thus by \cite[Lemma 1]{Hoffman2012} $\calF(u,v)=E((v-1)u)H(u)$ and we get
\begin{align*}
\phi(4u^2,v)=\calE(E((v-1)4u^2)H(4u^2))
=&\frac{\sin(\pi u\sqrt{1-v})}{ \sqrt{1-v} \sin(\pi u)}
 \frac{\cosh(\pi u\sqrt{1-v} )}{\cosh(\pi u)} .
\end{align*}
This proves Theorem \ref{thm:gfun}.

Setting $v=1$ in Theorem \ref{thm:gfun} we obtain
\begin{align*}
\phi(u^2,1)=&\frac{\pi u/2}{\sin(\pi u/2)} \frac{1}{\cosh(\pi u/2)}\\
=&\sum_{j=0}^\infty \frac{(-1)^j(2-2^{2j})B_{2j}}{(2j)!} \left(\frac{\pi u}{2}\right)^{2j}
\sum_{\ell=0}^\infty \frac{E_{2\ell}}{(2\ell)!} \left(\frac{\pi u}{2}\right) ^{2\ell}.
\end{align*}
Extracting the coefficient of $u^{2n}$ yields immediately the following identity by \eqref{equ:EulerNumber}
\begin{align}
\calE(h_n)=\sum_{d=1}^n \Xi(2n,d)
=&\frac{\pi^{2n}}{4^n (2n)!} \sum_{j=0}^n (-1)^j(2-2^{2j})B_{2j}E_{2n-2j}\binom{2n}{2j} \notag\\
=&\frac{-2}{4^n} \sum_{j=0}^n \gz(\ol{2n-2j})\frac{E_{2j}}{(2j)!} \pi^{2j} .\label{equ:Thn}
\end{align}
Here $\gz(\ol{0})=-1/2$. Now by \cite[Lemma 2]{Hoffman2012} we have
\begin{equation*}
N_{n,d}=\sum_{\ell =0}^{n-d}\binom{n-\ell}{d}(-1)^{n-d-\ell}h_\ell e_{n-\ell}.
\end{equation*}
Applying the homomorphism $\calE$ and using equation \eqref{equ:Tnn} and
\eqref{equ:Thn} we get Theorem~\ref{thm:longform} immediately.

\section{Proof of Theorems \ref{thm:Xi2nd}}
Define $g(y)=\sin y\cosh y/y$. First we want to study its power series expansion.
By definition
\begin{align*}
  g(y)=& \frac{e^{(1+i)y}-e^{-(1+i)y}+e^{(i-1)y}-e^{-(i-1)y} }{4iy}  \\
=& \frac1{2i}\sum_{n=0}^\infty  \frac{(1+i)^{2n+1}+(i-1)^{2n+1}}{(2n+1)!} y^{2n} \\
=&  \sum_{n=0}^\infty  \frac{(1+i)(2i)^{n-1}+(i-1)(2i)^{n-1}}{(2n+1)!} y^{2n}\\
=&  \sum_{n=0}^\infty  \frac{(-1)^{\lfloor n/2\rfloor} 2^n}{(2n+1)!} y^{2n}.
\end{align*}
We get by Theorem \ref{thm:gfun}:
\begin{equation*}
\phi(4u^2,v):= \sum_{d\ge 0} v^d \tG_d(u)=\frac{g(\pi u \sqrt{1-v})}{g(\pi u)}
=\frac{1}{g(\pi u)} \sum_{j=0}^\infty
\frac{(-1)^{\lfloor j/2\rfloor} 2^j(\pi u)^{2j} }{(2j+1)!} (1-v)^j   .
\end{equation*}
Now set $x=(\pi u)^2$ and let $f(x)=g(\sqrt{x})=(\sin\sqrt{x}\cosh\sqrt{x})/\sqrt{x}$.
Let $D$ be the differential operator with respect to $x$. Then
\begin{align*}
 \tG_d(u)=&(-1)^d \frac{1}{f(x)}
\sum_{j\ge d}\frac{(-1)^{\lfloor j/2\rfloor} 2^j x^j}{(2j+1)!}\binom{j}{d} \\
=&  \frac{1}{f(x)}\cdot \frac{(-x)^d}{d!}\cdot
D^d \sum_{j\ge d}\frac{(-1)^{\lfloor j/2\rfloor} 2^j  x^j}{(2j+1)!}  \\
=& \frac{1}{f(x)}\cdot \frac{(-x)^d}{d!}\cdot D^d f(x).
\end{align*}

\begin{lem} \label{lem:solveDE}
For all $d\ge 0$ define the polynomials
\begin{align*}
X_d(x)=&\sum_{j=0}^{\lfloor\frac{d-1}{2}\rfloor} (-1)^{\lfloor\frac{j}{2}\rfloor-1}
  \frac{(8x)^j}{2^{2d-1}(2j+1)!} \binom{2d-2j-1}{d}, \\
Y_d(x)=&\sum_{j=0}^{\lfloor\frac{d-1}{2}\rfloor} (-1)^{\lfloor\frac{j-1}{2}\rfloor}
  \frac{(8x)^j}{2^{2d-1}(2j+1)!} \binom{2d-2j-1}{d}, \\
Z_d(x)=&\sum_{j=0}^{\lfloor\frac{d-2}4\rfloor}(-1)^j \frac{(8x)^{2j+1}}{2^{2d}(4j+2)!}
\binom{2d-4j-2}{d}, \\
W_d(x)=&\sum_{j=0}^{\lfloor\frac{d}4\rfloor}(-1)^j \frac{(8x)^{2j}}{2^{2d}(4j)!}
\binom{2d-4j}{d} .
\end{align*}
Then we have
$$ \tG_d(\sqrt{x}/\pi)=X_d(x)\sqrt{x}\cot\sqrt{x}+Y_d(x)\sqrt{x}\tanh\sqrt{x}+Z_d(x) \cot\sqrt{x}\tanh\sqrt{x}
+W_d(x).$$
\end{lem}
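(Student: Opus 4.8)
The plan is to prove the identity by induction on $d$, using the closed form for $\tG_d$ established just above the statement. Write $\Theta_d:=\tG_d(\sqrt x/\pi)=\dfrac{(-x)^d}{d!\,f(x)}\,D^d f(x)$, where $f(x)=(\sin\sqrt x\,\cosh\sqrt x)/\sqrt x$. Differentiating the relation $D^{d+1}f=D(D^d f)$ and inserting the logarithmic derivative $f'/f=\tfrac{1}{2\sqrt x}(\cot\sqrt x+\tanh\sqrt x)-\tfrac{1}{2x}$, a short computation gives the first-order recurrence
\begin{equation*}
\Theta_{d+1}=\frac{1}{d+1}\left(-x\,D\Theta_d+\Bigl(d+\tfrac12\Bigr)\Theta_d-\tfrac12\sqrt x\,(\cot\sqrt x+\tanh\sqrt x)\,\Theta_d\right),\qquad \Theta_0=1.
\end{equation*}
Hence it suffices to show that the proposed right-hand side $\Psi_d:=X_d\,sc+Y_d\,st+Z_d\,ct+W_d$ — where I abbreviate $s=\sqrt x$, $c=\cot\sqrt x$, $t=\tanh\sqrt x$ — satisfies both $\Psi_0=1$ (immediate, since the sums defining $X_0,Y_0,Z_0$ are empty and $W_0=1$) and this same recurrence.

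First I would verify that the operation on the right is \emph{closed} on the four-dimensional $\Q[x]$-span of $\{sc,\,st,\,ct,\,1\}$. Differentiating the three non-constant generators gives $D(sc)=\tfrac{sc}{2x}-\tfrac12-\tfrac{c^2}{2}$, $D(st)=\tfrac{st}{2x}+\tfrac12-\tfrac{t^2}{2}$ and $D(ct)=\tfrac{sc-st}{2x}-\tfrac{c^2t+ct^2}{2s}$, so $-x\,D\Psi_d$ introduces the non-basis monomials $c^2,t^2,c^2t,ct^2$. The crucial observation is that the term $-\tfrac12(sc+st)\Psi_d$ generates exactly the same monomials — through $xc^2=(sc)^2$, $xt^2=(st)^2$ and $s(c^2t+ct^2)=(sc)(ct)+(st)(ct)$ — and with the opposite sign, so that every one of them cancels. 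The only product that does not cancel is $(sc)(st)=s^2ct=x\,ct$, which already lies in the span. This cancellation is what makes the ansatz self-consistent, and I expect it to be the conceptual core of the argument.

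With closure in hand, comparing the coefficients of $sc,\,st,\,ct$ and $1$ turns the recurrence for $\Theta_d$ into four coupled recurrences for the polynomials,
\begin{align*}
(d+1)X_{d+1}&=-xX_d'+dX_d-\tfrac12(Z_d+W_d),\\
(d+1)Y_{d+1}&=-xY_d'+dY_d+\tfrac12(Z_d-W_d),\\
(d+1)Z_{d+1}&=-xZ_d'+\Bigl(d+\tfrac12\Bigr)Z_d-\tfrac{x}{2}(X_d+Y_d),\\
(d+1)W_{d+1}&=-xW_d'+\Bigl(d+\tfrac12\Bigr)W_d+\tfrac{x}{2}(X_d-Y_d).
\end{align*}
A parity check confirms internal consistency: from the sign patterns $(-1)^{\lfloor j/2\rfloor-1}$ and $(-1)^{\lfloor(j-1)/2\rfloor}$ attached to $(8x)^j$ in $X_d$ and $Y_d$ one sees that $X_d+Y_d$ involves only even powers of $x$ while $X_d-Y_d$ involves only odd powers, so $\tfrac{x}{2}(X_d+Y_d)$ has the odd-power parity of $Z$ and $\tfrac{x}{2}(X_d-Y_d)$ the even-power parity of $W$, exactly as required.

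Finally I would substitute the explicit formulas and verify each recurrence term by term. Since $-x\,D$ multiplies the monomial $(8x)^j$ by $-j$, each identity collapses to a relation among the coefficients at a fixed $j$, with the $\tfrac{x}{2}(X_d\pm Y_d)$ and $\tfrac12(Z_d\pm W_d)$ terms contributing a single neighbouring index; after cancelling the common factorial and power-of-two factors it reduces to an elementary binomial identity such as $(d+1)\binom{2d-2j+1}{d+1}=4(d-j)\binom{2d-2j-1}{d}+(2j+1)\binom{2d-2j}{d}$ (the even-$j$ part of the $X$-recurrence), with analogous relations for the odd-$j$ part and for $Y,Z,W$. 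The hard part will be this bookkeeping: one must track how the shift $d\mapsto d+1$ interacts with the index shift forced by multiplication by $x$, keep the floor-function signs straight across the even/odd split, and handle the boundary terms — the lowest index $j=0$ and the top of each summation range, where a binomial coefficient like $\binom{2d-2j-1}{d}$ vanishes — so that the stated ranges of summation come out precisely right.
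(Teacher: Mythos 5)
Your proof is correct and is essentially the paper's own argument: the paper likewise reduces the claim to the recursive system $(d+1)X_{d+1}=dX_d-xX_d'-\tfrac12 Z_d-\tfrac12 W_d$, etc., with $X_0=Y_0=Z_0=0$, $W_0=1$, and your four coupled recurrences (obtained via the logarithmic derivative of $f$ rather than by differentiating the closed form for $f^{(d)}$) coincide exactly with those in the paper. The final coefficient verification, which you outline via binomial identities such as $(d+1)\binom{2d-2j+1}{d+1}=4(d-j)\binom{2d-2j-1}{d}+(2j+1)\binom{2d-2j}{d}$, is the same step the paper dispatches with ``this can be verified easily.''
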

\begin{proof}
It suffices to prove that for all $d\ge 0$
\begin{align*}
f^{(d)}(x)
=&(-1)^d d! x^{-d} X_d(x) \cos\sqrt{x}\cosh\sqrt{x} \\
+&(-1)^d d! x^{-d} Y_d(x) \sin\sqrt{x}\sinh\sqrt{x} \\
+&(-1)^d d! x^{-d-1/2} Z_d(x) \cos\sqrt{x}\sinh\sqrt{x} \\
+&(-1)^d d! x^{-d-1/2} W_d(x) \sin\sqrt{x}\cosh\sqrt{x}.
\end{align*}
Differentiating once we see that we need to show $X_d,$ $Y_d,$ $Z_d$ and $W_d$
are the unique solution to the recursive system of differential equations:
\begin{align*}
 (d+1)X_{d+1}(x)=& dX_d(x)-xX'_d(x)- \frac12 Z_d(x)- \frac12 W_d(x),\\
 (d+1)Y_{d+1}(x)=& dY_d(x)-xY'_d(x)+\frac12 Z_d(x)- \frac12 W_d(x),\\
 (d+1)Z_{d+1}(x)=& \frac{2d+1}2 Z_d(x)-xZ'_d(x)-\frac{x}2 X_d(x)- \frac{x}2 Y_d(x),\\
 (d+1)W_{d+1}(x)=& \frac{2d+1}2 W_d(x)-xW'_d(x)+\frac{x}2 X_d(x)- \frac{x}2 Y_d(x),
\end{align*}
with the initial conditions $X_0(x)=Y_0(x)=Z_0(x)=0$ and $W_0(x)=1$.
This can be verified easily.
\end{proof}

Set $x=\pi^2 u/4$. We have the following well-known power series expansions
\begin{align}\label{equ:cotFormula}
\sqrt{x}\cot \sqrt{x}=& -2\sum_{m=0}^\infty \frac{\gz(2m)}{4^m} u^m, \quad(\gz(0)=-1/2), \\
\sqrt{x}\tanh \sqrt{x}=&2\sum_{m=0}^\infty (-1)^{m-1} (4^m-1) \frac{\gz(2m)}{4^m} u^m, \notag\\
x \cot \sqrt{x}\tanh \sqrt{x}=&
\sum_{m=0}^\infty \left(\sum_{\substack{r+s=m\\ r,s\ge 0}}
(-1)^{r} (4^r-1)  \gz(2r)  \gz(2s) \right) \frac{ 4 u^m }{4^m}.  \notag
\end{align}
Thus
\begin{align*}
 \sqrt{x}\big(\cot \sqrt{x}+\tanh \sqrt{x}\big)= &
-2\sum_{m=0}^\infty  \gz(4m) u^{2m}
-2\sum_{m=0}^\infty  \gz(\ol{4m+2})  u^{2m+1} ,\\
\sqrt{x}\big(\cot \sqrt{x}-\tanh \sqrt{x}\big) = &
-2\sum_{m=0}^\infty  \gz(\ol{4m}) u^{2m}
-2\sum_{m=0}^\infty  \gz(4m+2)  u^{2m+1} .
\end{align*}
Hence
\begin{align*}
 \tG_d\Big(\frac{\sqrt{u}}{2}\Big)=&\left(\sum_{m=0}^\infty  \gz(4m) u^{2m}
+\sum_{m=0}^\infty  \gz(\ol{4m+2})  u^{2m+1}\right)
\sum_{j=0}^{\lfloor\frac{d-1}{4}\rfloor}
  \frac{(-1)^j\pi^{4j}u^{2j}}{2^{2d-2j-2}(4j+1)!} \binom{2d-4j-1}{d}\\
+&\left(\sum_{m=0}^\infty  \gz(\ol{4m}) u^{2m}
+\sum_{m=0}^\infty  \gz(4m+2)  u^{2m+1}\right)
\sum_{j=0}^{\lfloor\frac{d-3}{4}\rfloor}
  \frac{(-1)^j \pi^{4j+2}u^{2j+1}}{2^{2d-2j-3}(4j+3)!} \binom{2d-4j-3}{d} \\
+&\sum_{m=0}^\infty \left(\sum_{\substack{r+s=m\\ r,s\ge 0}}
(-1)^{r} (4^r-1)  \gz(2r)  \gz(2s) \right) \frac{u^m}{4^m}
\sum_{j=0}^{\lfloor\frac{d-2}4\rfloor}\frac{(-1)^j \pi^{4j}u^{2j}}{2^{2d-2j-5}(4j+2)!}
\binom{2d-4j-2}{d}  \\
+&\text{terms of degree $<d$}.
\end{align*}
We can complete the proof of Theorem \ref{thm:Xi2nd} by extracting the
coefficient of $u^n$ from the above.

We would like to point out that the quantity
\begin{equation*}
R_m(2)-R_m(1)=\sum_{\substack{r+s=m\\ r,s\ge 0}}
(-1)^{r} (4^r-1)  \gz(2r)  \gz(2s)
\end{equation*}
does not seem to simplify. Here, for any real $\ga>0$
\begin{equation*}
 R_m(\ga)=\sum_{\substack{r+s=m\\ r,s\ge 0}}
(-\ga^2)^{r}   \gz(2r)  \gz(2s).
\end{equation*}
A formula involving this quantity was written down first
by Ramanujan in his famous notebook \cite[p.\ 276]{Berndt}.
This was proved later by Grosswald \cite{Gro}. The formula says
\begin{multline*}
\ga^{-n}2^{2n} \pi^{2n-1}\sum_{\substack{r+s=n\\ r,s\ge 0}}
(-\ga^2)^{r}  \frac{B_{2r}}{2(2r)!} \frac{B_{2s}}{2(2s)!}\\
=(-\ga)^{1-n}\left(\frac12\gz(2n-1)+
\sum_{k=1}^\infty \frac{k^{1-2n}}{e^{2k\pi\ga }-1}\right)
-\ga^{n-1}\left(\frac12\gz(2n-1)+
\sum_{k=1}^\infty \frac{k^{1-2n}}{e^{2k\pi/\ga}-1}\right).
\end{multline*}
Multiplying both sides by $(-\ga)^n\pi$ we get
\begin{multline*}
R_n(\ga) = \pi \ga \left[(-\ga^2)^{n-1}\left(\frac12\gz(2n-1)+
\sum_{k=1}^\infty \frac{k^{1-2n}}{e^{2k\pi/\ga}-1}\right)
-\left(\frac12\gz(2n-1)+
\sum_{k=1}^\infty \frac{k^{1-2n}}{e^{2k\pi\ga}-1}\right)\right].
\end{multline*}
Clearly if $n$ is odd then $R_n(1)=0$. When $n$ is even we get
\begin{equation*}
R_n(1) = -\pi \left(\gz(2n-1)+
2 \sum_{k=1}^\infty \frac{k^{1-2n}}{e^{2k\pi}-1}\right).
\end{equation*}
Notice the series on the right hand side converges very fast. This fact
was employed by Ramanujan in his
computation of Riemann zeta function at odd integers.

\section{Restricted sum formula of Euler sums with fixed number of
alternating components}
For any integers $0\le \ga\le d\le n$ we consider the sum
\begin{equation*}
A_\ga(2n,d) :=\sum_{\substack{j_1+\cdots+j_d=n,\,j_1,\dots,j_d > 0
\\ \sharp\{\ell:\  \vep_\ell=-1\}=\ga}}
 \gz(s_1,\dots,s_d;\vep_1,\dots,\vep_d).
\end{equation*}
Notice that $\ga$ counts the number of components that are truly alternating. So
in particular $A_0(2n,d)$ is the restricted sum of multiple zeta values of fixed
depth $d$ and weight $2n$ which was studied in \cite{Hoffman2012}. Moreover
\begin{equation*}
A(2n,d)=\sum_{\ga=0}^d A_\ga(2n,d)
\end{equation*}
is the restricted sum of all alternating Euler sums of depth $d$
and weight $2n$. An easy consequence of Hoffman's result is the following.
\begin{prop}\label{prop:Aga=2nd}
For all positive integers $n$ we have
\begin{equation}\label{equ:Aga=2nd}
A(2n,d)=\binom{2d-1}{d}\frac{\gz(2n)}{2^{2n+d-2}}
-\sum_{j=1}^{\lfloor \frac{d-1}2\rfloor}
\binom{2d-2j-1}{d}\frac{\gz(2j)\gz(2n-2j) }{2^{2n+d-3}(2j+1)B_{2j}}.
\end{equation}
\end{prop}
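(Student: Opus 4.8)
The plan is to obtain \eqref{equ:Aga=2nd} by summing over all sign patterns at once, which collapses $A(2n,d)$ onto the restricted MZV sum $A_0(2n,d)$ studied by Hoffman. Since $A(2n,d)=\sum_{\ga=0}^d A_\ga(2n,d)$ ranges over every admissible sign vector, I would first rewrite it as a double sum over the even argument vectors $(2j_1,\dots,2j_d)$ with $j_1+\cdots+j_d=n$, $j_i>0$, and over all $\vep=(\vep_1,\dots,\vep_d)\in\{\pm1\}^d$. Because here $s_1=2j_1\ge 2$, every $\gz(2j_1,\dots,2j_d;\vep)$ converges and the $2^d$-term sign sum is finite, so it commutes freely with the defining series.

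The heart of the argument is the elementary factorization of this inner sign sum. Interchanging $\sum_{\vep}$ with $\sum_{n_1>\cdots>n_d\ge1}$ and using $\sum_{\vep_i=\pm1}\vep_i^{n_i}=1+(-1)^{n_i}$, which equals $2$ when $n_i$ is even and $0$ otherwise, only the terms with all $n_i$ even survive; each contributes a factor $2$, and substituting $n_i=2m_i$ gives
\begin{equation*}
\sum_{\vep\in\{\pm1\}^d}\gz(2j_1,\dots,2j_d;\vep_1,\dots,\vep_d)
=2^d\sum_{m_1>\cdots>m_d\ge1}\frac{1}{(2m_1)^{2j_1}\cdots(2m_d)^{2j_d}}
=2^{d-2n}\gz(2j_1,\dots,2j_d),
\end{equation*}
using $2j_1+\cdots+2j_d=2n$. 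Summing over the argument vectors then yields the key reduction $A(2n,d)=2^{d-2n}A_0(2n,d)$, where $A_0(2n,d)=\sum_{j_1+\cdots+j_d=n,\,j_i>0}\gz(2j_1,\dots,2j_d)$ is precisely the $\ga=0$ restricted sum of \cite{Hoffman2012}.

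To finish I would substitute Hoffman's closed formula for $A_0(2n,d)$ and carry out the power-of-two bookkeeping: multiplying his expression by $2^{d-2n}$ converts the leading term into $\binom{2d-1}{d}\gz(2n)/2^{2n+d-2}$ and each remaining term into $\binom{2d-2j-1}{d}\gz(2j)\gz(2n-2j)/\big(2^{2n+d-3}(2j+1)B_{2j}\big)$, giving \eqref{equ:Aga=2nd}. As a consistency check, for $d=2$ the sum over $j$ is empty and the identity reads $A(2n,2)=2^{2-2n}\cdot\tfrac34\gz(2n)=3\gz(2n)/2^{2n}$, which is exactly $2^{2-2n}$ times the Gangl--Kaneko--Zagier relation \eqref{equ:GKZ}.

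The one point requiring care is matching Hoffman's statement to the form used above; there is no essential difficulty beyond this bookkeeping, which is why the result is an easy consequence of his theorem. If a self-contained derivation is preferred, the formula for $A_0(2n,d)$ can be reproduced by the method of Section~2 applied to the evaluation $\calE_0(x_j)=1/j^2$, so that $\calE_0(p_n)=\gz(2n)$ and $\calE_0(N_{n,d})=A_0(2n,d)$, with generating function $1+\sum_{n\ge d\ge1}A_0(2n,d)u^nv^d=\calE_0\big(E((v-1)u)H(u)\big)=\sin(\pi\sqrt{(1-v)u})/\big(\sqrt{1-v}\,\sin(\pi\sqrt u)\big)$; the same $D^d$-differentiation and the cotangent expansion \eqref{equ:cotFormula} then extract the coefficients. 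This coefficient extraction is the only mildly laborious step, but it is routine once the reduction $A(2n,d)=2^{d-2n}A_0(2n,d)$ is in place.
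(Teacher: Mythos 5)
Your proposal is correct and is essentially identical to the paper's own proof: the paper likewise expands $(1+(-1)^{n_1})\cdots(1+(-1)^{n_d})$ to show the sum over all sign patterns equals $\frac{2^d}{2^{2n}}$ times the unsigned sum, i.e.\ $A(2n,d)=2^{d-2n}A_0(2n,d)$, and then substitutes Hoffman's formula \eqref{equ:Hoffman}. Your power-of-two bookkeeping and the $d=2$ consistency check are both accurate.
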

\begin{proof}
Notice that
$$\sum_{n_1>\cdots>n_d\ge 1}
    \frac{ (1+(-1)^{n_1})\cdots (1+(-1)^{n_d})}{n_1^{2s_1}\cdots n_d^{2s_d}}
=\frac{2^d}{2^{2n}} \sum_{n_1>\cdots>n_d\ge 1}
    \frac{1}{n_1^{2s_1}\cdots n_d^{2s_d}}.$$
So \eqref{equ:Aga=2nd} follows immediately from the following formula
\begin{equation}\label{equ:Hoffman}
A_0(2n,d)=\binom{2d-1}{d} \frac{\gz(2n)}{2^{2(d-1)}}
-\sum_{j=1}^{\lfloor \frac{d-1}2\rfloor}
\binom{2d-2j-1}{d}\frac{\gz(2j)\gz(2n-2j)}{2^{2d-3}(2j+1)B_{2j}}.
\end{equation}
given by \cite[Theorem 1]{Hoffman2012}.
\end{proof}
For example, when $d<4$ we have
\begin{align}
A(2n,2)=&\frac{3}{4^n}\gz(2n)  \label{equ:Aga=2n2H}\\
A(2n,3)=&\frac{5}{4^n}\gz(2n)-\frac{2}{4^n}\gz(2)\gz(2n-2).\label{equ:Aga=2n3H}
\end{align}

The following lemma will be handy in the computation of
small depth cases.
\begin{lem}\label{lem:depthmixZetaDbl}
For every positive integer $n$ and nonnegative integer $r$ we define
\begin{equation*}
A^{(r)}_1(2n,2)=\sum_{j=1}^{n-1} j^r \gz(2j)\gz(\ol{2n-2j}).
\end{equation*}
Then we have
\begin{align}\label{equ:mixZetaDbl}
A^{(0)}_1(2n,2)=&\frac12\gz(2n)+n\gz(\ol{2n}),\\
\label{equ:wtmixZetaDbl}
A^{(1)}_1(2n,2)=&\frac{n}{2} \gz(2n)
+\frac{n(2n-1)}{4} \gz(\ol{2n})
-\frac{3}{2}\gz(\ol{2})\gz(\ol{2n-2}),\\
\label{equ:wtmixZetaTriple}
A^{(2)}_1(2n,2)=&\frac{n^2}{2} \gz(2n)
+\frac{n(2n-1)(4n-1)}{24} \gz(\ol{2n})
-\frac{4n+3}{4} \gz(\ol{2})\gz(\ol{2n-2}).
\end{align}
\end{lem}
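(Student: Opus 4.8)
The plan is to prove all three identities at once by a single generating-function computation. Write $C(z)=\pi z\cot(\pi z)$ and $S(z)=\pi z/\sin(\pi z)$. The classical expansion, which is \eqref{equ:cotFormula} with $u=4z^2$, gives $C(z)=-2\sum_{m\ge0}\gz(2m)z^{2m}$; from the half-angle identity $\csc\theta=\cot(\theta/2)-\cot\theta$ together with $\gz(\ol{2m})=(2^{1-2m}-1)\gz(2m)$ from \eqref{equ:olzeta}, one gets $S(z)=-2\sum_{m\ge0}\gz(\ol{2m})z^{2m}$. Hence the two single-variable series are $A(z):=\sum_{j\ge1}\gz(2j)z^{2j}=\tfrac12(1-C)$ and $B(z):=\sum_{k\ge1}\gz(\ol{2k})z^{2k}=\tfrac12(1-S)$.

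Next I introduce the operator $\theta:=\tfrac12 z\,\frac{d}{dz}$, which multiplies the coefficient of $z^{2j}$ by $j$. Then $A^{(r)}_1(2n,2)$ is exactly the coefficient of $z^{2n}$ in $(\theta^r A)(z)\cdot B(z)$: the coefficient of $z^{2n}$ in a product of two even series is the convolution $\sum_{j+k=n}$, and the constraints $j\ge1$, $k\ge1$ reproduce the summation range $1\le j\le n-1$. This reduces the lemma to extracting Taylor coefficients of explicit products of $\cot$ and $\csc$. For $r=0$ the product is $AB$; for $r=1,2$ one differentiates first, using $\theta A=-\tfrac14 zC'$ and $\theta^2A=-\tfrac18\big(zC'+z^2C''\big)$.

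The engine is a small set of reduction identities: $CS=S-zS'$, $S^2=C-zC'$, and $C^2=S^2-(\pi z)^2$ (the last being $\csc^2=1+\cot^2$). These, together with their derivatives, rewrite every product of $C$, $S$ and their derivatives appearing above as a $\Q[(\pi z)^2]$-linear combination of $C,S$ and their derivatives $C^{(k)},S^{(k)}$. Such a canonical form has a transparent coefficient of $z^{2n}$: one uses $[z^{2n}]C=-2\gz(2n)$, $[z^{2n}]S=-2\gz(\ol{2n})$, the fact that $[z^{2n}](z^pC^{(p)})$ and $[z^{2n}](z^pS^{(p)})$ are polynomial-in-$n$ multiples of $\gz(2n)$ and $\gz(\ol{2n})$, and the single weight-shifting contribution $[z^{2n}]\big((\pi z)^2S\big)=-2\pi^2\gz(\ol{2n-2})=24\,\gz(\ol2)\gz(\ol{2n-2})$, which is the sole source of the product term in \eqref{equ:wtmixZetaDbl} and \eqref{equ:wtmixZetaTriple}. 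Collecting coefficients then yields the three formulas; as a check, for $r=0$ one has $AB=\tfrac14(1-C)(1-S)$ and $[z^{2n}]CS=(4n-2)\gz(\ol{2n})$, which give $A^{(0)}_1(2n,2)=\tfrac12\gz(2n)+n\gz(\ol{2n})$ at once, matching \eqref{equ:mixZetaDbl}.

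I expect the main obstacle to be the reduction of the higher products arising for $r=1$ and $r=2$, for instance $zC'\cdot S=CS-S^3$, since collapsing $S^3$ and $z^2C''\cdot S$ into the canonical linear-in-$C,S$ form plus the lone $(\pi z)^2S$ term requires juggling several of the differentiated identities, and this is where the sign and binomial bookkeeping can go astray. The $r=0$ case, by contrast, follows immediately from the single identity $CS=S-zS'$, so the difficulty is entirely concentrated in verifying that the degree-three reductions leave behind exactly one irreducible $(\pi z)^2S$ term (with the coefficients $-\tfrac32$ and $-\tfrac{4n+3}{4}$ respectively) and otherwise produce only single zeta values.
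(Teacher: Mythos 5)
Your proposal is correct, and I checked that the reductions you flag as the risky step do go through: $S^3=\tfrac12\bigl(z^2S''-2zS'+2S+(\pi z)^2S\bigr)$ (from $\csc''\theta=2\csc^3\theta-\csc\theta$) and $z^2C''S=-2zS^2S'=-\tfrac{2z}{3}(S^3)'$, so each of the three cases leaves behind exactly one irreducible $(\pi z)^2S$ term and the coefficients come out as $-\tfrac32$ and $-\tfrac{4n+3}{4}$ as claimed. The route is the same in spirit as the paper's --- insert the weight $j^r$ by applying a differential operator to one factor of a generating function and then extract coefficients --- but the bookkeeping is genuinely different: the paper works on the Bernoulli side, forming the exponential generating function $H(x,y)=(g(y)-1)\bigl(\tfrac{xy}{e^{xy}-1}+\tfrac{xy}{2}-1\bigr)$ with $g(y)=ye^{y/2}/(e^y-1)$, applying $(x\,\partial/\partial x)^r$ at $x=1$, and only at the very end converting $B_{2j}$ and $B_{2j}(1/2)$ into $\gz(2j)$ and $\gz(\ol{2j})$ via Euler's formula (and it imports the $r=0$ case from Nakamura rather than deriving it). You work on the zeta side throughout with the ordinary generating functions $C=\pi z\cot(\pi z)$ and $S=\pi z\csc(\pi z)$, which makes the $r=0$ case self-contained and makes it transparent that the lone product term $\gz(\ol2)\gz(\ol{2n-2})$ arises from the single $(\pi z)^2S$ summand; the price is the small calculus of trigonometric reduction identities in place of the paper's manipulation of $g$, $h$ and their derivatives. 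The two computations are equivalent under the standard Bernoulli--zeta dictionary, so I would call this the same method in different, and arguably cleaner, clothing.
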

\begin{proof}
We can obtain \eqref{equ:mixZetaDbl} by
setting $x=1/2$, $y=0$ and
replacing $n$ by $2n$ in \cite[(2.8)]{Nakamura2009}.

To prove \eqref{equ:wtmixZetaDbl} let $g(y)=ye^{y/2}/(e^y-1)$ and $h(y)=ye^{y}/(e^y-1)$.
Now consider the generating function
\begin{align*}
H(x,y)=&  \sum_{n=2}^{\infty}\frac{y^{2n}}{(2n)!}\sum_{j=1}^{n-1} x^{2j}B_{2j}B_{2n-2j}\Big(\frac12\Big) {2n\choose 2j} \\
=& \sum_{j=1}^{\infty} \left(\sum_{n=j+1}^{\infty} B_{2n-2j}\Big(\frac12\Big)\frac{y^{2n-2j}}{(2n-2j)!} \right) B_{2j}\frac{(xy)^{2j}}{(2j)!} \\
=&(g(y)-1) \left( \frac{xy}{e^{xy}-1}+\frac{xy}{2}-1 \right).
\end{align*}
An easy computation yields
\begin{align*}
\left.\frac{\partial}{\partial x}H(x,y)\right|_{x=1}
=& (g(y)-1) \left( \frac{y}{e^{y}-1}-\frac{y^2e^y}{(e^{y}-1)^2}
+\frac{y}{2} \right) \\
=& \frac{y^2}{8} g(y)-\frac{y^2}{2} g''(y)-yh'(y)+y/2.
\end{align*}
Comparing the coefficients of $y^{2n}$ we have
\begin{equation*}
\sum_{j=1}^{n-1} 2j \frac{B_{2j}}{(2j)!} \frac{B_{2n-2j} (1/2)} {(2n-2j)!}
=\frac{1}{8} \frac{B_{2n-2} (1/2)}{(2n-2)!}
- n(2n-1) \frac{B_{2n}(1/2)}{(2n)!}
- 2n\frac{B_{2n}}{(2n)!}
\end{equation*}
Multiplying by $(-1)^{n}(2\pi)^{2n}/8$ on both sides we get \eqref{equ:wtmixZetaDbl}.
Similarly it is straight-forward to verify that
\begin{align*}
 \left.\left(x\frac{\partial}{\partial x}\right)^2 H(x,y)\right|_{x=1}
=&\frac{7y^2}{24} g(y)+\frac{y^3}{12} g'(y) -\frac{y^2}2 g''(y)-\frac{y^3}{3} g'''(y)-yh'(y)-y^2h''(y)+\frac{y}{2}.
\end{align*}
Consequently
\begin{equation*}
\sum_{j=1}^{n-1} 4j^2 \frac{B_{2j}}{(2j)!} \frac{B_{2n-2j} (1/2)} {(2n-2j)!}
=\frac{4n+3}{24} \frac{B_{2n-2} (1/2)}{(2n-2)!}
- \frac{n(2n-1)(4n-1)}3 \frac{B_{2n}(1/2)}{(2n)!}
- 4n^2\frac{B_{2n}}{(2n)!}.
\end{equation*}
Multiplying by $(-1)^{n}(2\pi)^{2n}/16$ on both sides we get \eqref{equ:wtmixZetaTriple}.
This completes the proof of the lemma.
\end{proof}

In the next three sections we handle $A_\ga(2n,d)$ for $d=2,3,4$ respectively.
The main idea is to use stuffle relations (see \cite{Zeuler}) and \eqref{equ:Hoffman}
to express $A_\ga(2n,d)$ in terms
of sums similar to those in Lemma~\ref{lem:depthmixZetaDbl}.
For example, when there is only one alternating component we have
\begin{multline}  \label{equ:Aga=12nd}
 A_1(2n,d)=\sum_{\substack{j_1,\dots,j_d>0,\\ j_1+\cdots+j_d=n}}
\sum_{k=1}^d \gz(2j_1,\dots,2j_{k-1},\ol{2j_k},2j_{k+1},\dots,2j_d)  \\
= (-1)^{d-1}\binom{n-1}{d-1}\gz(\ol{2n})
+\sum_{k=1}^{d-1} (-1)^{k-1}\sum_{\ell=k}^{n+k-d} \binom{\ell-1}{k-1} \gz(\ol{2\ell})
A_0(2n-2\ell,d-k) .
\end{multline}

\begin{rem}
To compute $A_1(2n,d)$ for arbitrary $d$ we need to generalize Lemma~\ref{lem:depthmixZetaDbl} to find a formula of $A^{(r)}_1(2n,2)$
for all nonnegative integer $r$. We leave this to the interested reader.
In the last section we shall use symmetric function theory to find
a sum formula for $A_1(2n,d)$ for arbitrarily fixed $n$ and $d$.
\end{rem}

\section{Euler sums of depth two}
We call Euler sums of form $\gz(\ol{j_1},\dots,\ol{j_d})$ to be
of total alternating type. We first consider the restricted sums
involving only this type of Euler sums.
\begin{prop}\label{prop:Aga=22n2}
For all positive integers $n$ we have
\begin{equation}\label{equ:Aga=22n2}
A_2(2n,2)= \sum_{a,b>0,a+b=n}\gz(\ol{2a},\ol{2b})
= \frac14\gz(2n)+\frac12\gz (\ol{2n}).
\end{equation}
\end{prop}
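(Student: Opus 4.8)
The plan is to reduce the depth-two total-alternating sum to products of single alternating zeta values via the stuffle (harmonic) product, and then evaluate those products through a generating function. The starting point is the stuffle relation
$$\gz(\ol{2a})\gz(\ol{2b})=\gz(\ol{2a},\ol{2b})+\gz(\ol{2b},\ol{2a})+\gz(2a+2b),$$
which follows by splitting the double sum defining the product according to whether the outer summation index is larger than, smaller than, or equal to the inner one; the diagonal contributes the \emph{non}-alternating value $\gz(2a+2b)$, since $(-1)^m(-1)^m=1$. Summing this identity over all $a,b>0$ with $a+b=n$, the two off-diagonal families each reproduce $A_2(2n,2)$ after reindexing $a\leftrightarrow b$, while the diagonal contributes $n-1$ copies of $\gz(2n)$. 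Hence
$$\sum_{a+b=n,\,a,b>0}\gz(\ol{2a})\gz(\ol{2b})=2A_2(2n,2)+(n-1)\gz(2n),$$
so it remains only to evaluate the left-hand product sum.

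For that I would introduce the generating function $G(x)=\sum_{m\ge1}\gz(\ol{2m})x^{2m}$ and compute it in closed form from the partial-fraction expansion of the cosecant, obtaining $G(x)=\tfrac12-\tfrac{\pi x}{2\sin(\pi x)}$. The product sum in question is exactly the coefficient of $x^{2n}$ in $G(x)^2$, and expanding
$$G(x)^2=\frac14-\frac{\pi x}{2\sin(\pi x)}+\frac{\pi^2x^2}{4\sin^2(\pi x)}$$
shows that the only new ingredient needed is the Taylor expansion of $\pi^2x^2/\sin^2(\pi x)$. I would obtain this by differentiating the classical series $\pi x\cot(\pi x)=1-2\sum_{m\ge1}\gz(2m)x^{2m}$ (which is the content of \eqref{equ:cotFormula}), using $\csc^2=-\tfrac1\pi(\cot)'$; this yields $\tfrac{\pi^2x^2}{4\sin^2(\pi x)}=\tfrac14+\tfrac12\sum_{m\ge1}(2m-1)\gz(2m)x^{2m}$, while the middle term $-\tfrac{\pi x}{2\sin(\pi x)}=G(x)-\tfrac12$ contributes $\gz(\ol{2n})$.

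Collecting the coefficient of $x^{2n}$ (for $n\ge1$) then gives $\sum_{a+b=n}\gz(\ol{2a})\gz(\ol{2b})=\gz(\ol{2n})+\tfrac{2n-1}{2}\gz(2n)$, and substituting this into the displayed stuffle identity and solving for $A_2(2n,2)$ produces $\tfrac12\gz(\ol{2n})+\tfrac14\gz(2n)$, as claimed. I expect no genuine obstacle here: the two points requiring care are bookkeeping the diagonal term (the sign $(-1)^{2m}=1$ is precisely what forces an \emph{unbarred} $\gz(2n)$ and hence the $\tfrac14\gz(2n)$ rather than an alternating value), and differentiating the cotangent series correctly to reach the $\csc^2$ coefficients; both are routine. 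An alternative avoiding the generating function would substitute $\gz(\ol{2a})=(2^{1-2a}-1)\gz(2a)$ and sum directly, but that reduces to Euler's convolution identities for $\sum\gz(2a)\gz(2b)$ and is appreciably messier, so I would prefer the $G(x)^2$ route.
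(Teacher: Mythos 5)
Your argument is correct and follows essentially the same route as the paper's second (``maybe more elegant'') proof: both reduce the claim, via the stuffle relation with its non-alternating diagonal term $\gz(2a+2b)$, to the convolution identity \eqref{equ:olZetaDbl} for $\sum_{a+b=n}\gz(\ol{2a})\gz(\ol{2b})$. The only difference is that you derive \eqref{equ:olZetaDbl} self-containedly by squaring the cosecant generating function $G(x)=\tfrac12-\tfrac{\pi x}{2\sin(\pi x)}$ and extracting coefficients (which checks out), whereas the paper imports the equivalent Bernoulli-polynomial convolution identity \eqref{equ:depth2Bern12} from Nakamura at $x=y=1/2$.
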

\begin{proof}
Let $a,b>0$ such that $a+b=n$. Then by stuffle relation
and \eqref{equ:olzeta} we have
\begin{align*}
\gz(\ol{2a},\ol{2b})+\gz(\ol{2b},\ol{2a})=&\gz(\ol{2a})\gz(\ol{2b})-\gz(2a+2b)\\
=&(2^{1-2a}-1)(2^{1-2b}-1)\gz(2a)\gz(2b)-\gz(2n)\\
=&(2^{2-2n}-2^{1-2a}-2^{1-2b}+1)\gz(2a)\gz(2b)-\gz(2n).
\end{align*}
Therefore
\begin{align*}
2A_2(2n,2)=&(4^{1-n}+1)\sum_{a,b>0,a+b=n}\gz(2a)\gz(2b)
-\sum_{a,b>0,a+b=n}4^{1-a}\gz(2a)\gz(2b)-(n-1)\gz(2n)\\
=&\left(\frac{(4^{1-n}+1)(2n+1)}{2}-(n4^{1-n}+2)-(n-1)\right)\gz(2n)
= \left( \frac{2}{4^n}-\frac12 \right)\gz(2n).
\end{align*}
by \cite[(2.4)]{Nakamura2009} and the third displayed formula from bottom
on page 154 of \cite{Nakamura2009}. The proposition now follows easily.

Anther (maybe more elegant) proof is to set $x=y=1/2$ and
replace $n$ by $2n$ in \cite[(2.8)]{Nakamura2009}. We have
\begin{equation}\label{equ:depth2Bern12}
\sum_{a,b\ge 0,a+b=n}
{2n\choose 2a} B_{2a}\Big(\frac12\Big)B_{2b}\Big(\frac12\Big)=-(2n-1)B_{2n}.
\end{equation}
Notice that
$(-1)^{a-1} (2\pi)^{2a} B_{2a}(1/2)/(2(2a)!)=\gz(\ol{2a})$. Thus
\begin{equation}\label{equ:olZetaDbl}
\sum_{a,b>0,a+b=n}\gz(\ol{2a})\gz(\ol{2b})=\frac{2n-1}{2} \gz(2n)+\gz(\ol{2n}).
\end{equation}
Hence
\begin{equation*}
\sum_{a,b>0,a+b=n}\big(2\gz(\ol{2a},\ol{2b})+\gz(2n)\big)
=\frac{2n-1}{2} \gz(2n)+\gz(\ol{2n}).
\end{equation*}
This yields \eqref{equ:Aga=22n2} quickly.
\end{proof}

\begin{prop}\label{prop:Aga=1d=2}
For all positive integers $n$ we have
\begin{equation}\label{equ:Aga=1d=2}
A_1(2n,2)=\frac12\gz(2n)+\gz(\ol{2n}).
\end{equation}
\end{prop}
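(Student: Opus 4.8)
The plan is to derive \eqref{equ:Aga=1d=2} directly from the stuffle (harmonic) product relation for depth-one Euler sums, reducing everything to the sum already evaluated in Lemma~\ref{lem:depthmixZetaDbl}. For positive integers $a,b$ with $a+b=n$ the stuffle relation reads
\[
\gz(\ol{2a})\gz(2b)=\gz(\ol{2a},2b)+\gz(2b,\ol{2a})+\gz(\ol{2n}),
\]
where the diagonal term carries the product sign $(-1)(+1)=-1$ and hence a bar. Summing this identity over all $a,b>0$ with $a+b=n$ contributes $(n-1)\gz(\ol{2n})$ from the $n-1$ diagonal terms.

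First I would observe that the two ordered double-sum families on the right fold back into $A_1(2n,2)$ itself. Reindexing $a\leftrightarrow b$ sends $\sum_{a+b=n}\gz(2b,\ol{2a})$ to $\sum_{a+b=n}\gz(2a,\ol{2b})$, so that
\[
\sum_{\substack{a+b=n\\ a,b>0}}\bigl(\gz(\ol{2a},2b)+\gz(2b,\ol{2a})\bigr)
=\sum_{\substack{a+b=n\\ a,b>0}}\bigl(\gz(\ol{2a},2b)+\gz(2a,\ol{2b})\bigr)=A_1(2n,2).
\]
Combining this with the summed stuffle relation yields the clean reduction
\[
A_1(2n,2)=\sum_{\substack{a+b=n\\ a,b>0}}\gz(\ol{2a})\gz(2b)-(n-1)\gz(\ol{2n}).
\]

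Next I would evaluate the remaining product sum. Setting $j=n-a$ shows
$\sum_{a+b=n}\gz(\ol{2a})\gz(2b)=\sum_{j=1}^{n-1}\gz(2j)\gz(\ol{2n-2j})=A^{(0)}_1(2n,2)$, which is exactly the quantity computed in \eqref{equ:mixZetaDbl}, namely $\tfrac12\gz(2n)+n\gz(\ol{2n})$. Substituting gives
$A_1(2n,2)=\tfrac12\gz(2n)+n\gz(\ol{2n})-(n-1)\gz(\ol{2n})=\tfrac12\gz(2n)+\gz(\ol{2n})$, as claimed. As a cross-check one could instead specialize the general expansion \eqref{equ:Aga=12nd} to $d=2$, where $A_0(2n-2\ell,1)=\gz(2n-2\ell)$ and the sole surviving sum is again $A^{(0)}_1(2n,2)$; this reproduces the same answer.

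There is essentially no serious obstacle, since the heavy lifting---the closed form for $\sum_j\gz(2j)\gz(\ol{2n-2j})$---is already supplied by Lemma~\ref{lem:depthmixZetaDbl}. The only points demanding care are the bookkeeping of the sign of the stuffle diagonal term (ensuring it is $\gz(\ol{2n})$ and not $\gz(2n)$) and the symmetry reindexing that collapses the two ordered double sums into $A_1(2n,2)$; both are routine once set up correctly.
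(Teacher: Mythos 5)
Your proof is correct and follows essentially the same route as the paper: the paper invokes the general stuffle-derived identity \eqref{equ:Aga=12nd} at $d=2$ to get $A_1(2n,2)=\sum_{j+k=n}\gz(2j)\gz(\ol{2k})-(n-1)\gz(\ol{2n})$ and then applies \eqref{equ:mixZetaDbl}, while you simply rederive that same identity directly from the depth-one stuffle product (correctly tracking the $\gz(\ol{2n})$ diagonal term) before applying the same lemma.
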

\begin{proof}
By \eqref{equ:Aga=12nd}
\begin{equation*}
A_1(2n,2)=\sum_{\substack{j+k=n\\ j,k>0}}\gz(2j)\gz(\ol{2k})-(n-1)\gz(\ol{2n}) .
\end{equation*}
The proposition follows from \eqref{equ:mixZetaDbl} quickly.
\end{proof}

\begin{prop}\label{prop:C2n2}
For all positive integers $n$ we have
\begin{equation}\label{equ:C2n2}
A(2n,2)= \sum_{a,b\in \barN,|a|+|b|=n}\gz(2a,2b)
=\frac32\gz(2n)+\frac32\gz (\ol{2n}).
\end{equation}
\end{prop}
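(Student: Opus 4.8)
The plan is to split $A(2n,2)$ according to the number of genuinely alternating components and then invoke the three depth-two evaluations already in hand. By the very definition of the pieces, $A(2n,2)=A_0(2n,2)+A_1(2n,2)+A_2(2n,2)$, where the three summands collect, respectively, the terms with no bar, with exactly one bar, and with two bars among the even arguments. Since $a,b\in\barN$ in the statement range over barred and unbarred positive integers with $|a|+|b|=n$, this triple decomposition exhausts all sign patterns and so equals the sum over $a,b\in\barN$ in \eqref{equ:C2n2}.

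First I would pin down $A_0(2n,2)=\sum_{a+b=n}\gz(2a,2b)$, the classical restricted sum of double zeta values. Setting $d=2$ in Hoffman's formula \eqref{equ:Hoffman} (equivalently, invoking the Gangl--Kaneko--Zagier identity \eqref{equ:GKZ}) makes the correction sum empty, because $\lfloor (d-1)/2\rfloor=0$, and leaves $A_0(2n,2)=\binom{3}{2}\gz(2n)/2^{2}=\tfrac34\gz(2n)$. Next, Proposition~\ref{prop:Aga=1d=2} supplies $A_1(2n,2)=\tfrac12\gz(2n)+\gz(\ol{2n})$, and Proposition~\ref{prop:Aga=22n2} supplies $A_2(2n,2)=\tfrac14\gz(2n)+\tfrac12\gz(\ol{2n})$.

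It then remains only to add the three contributions: the coefficient of $\gz(2n)$ is $\tfrac34+\tfrac12+\tfrac14=\tfrac32$ and the coefficient of $\gz(\ol{2n})$ is $1+\tfrac12=\tfrac32$, which is exactly the asserted identity. There is essentially no obstacle at this stage; all the substantive work lives in the preceding two propositions and in Hoffman's formula, and what remains is a single line of arithmetic.

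As a consistency check I would confirm agreement with \eqref{equ:Aga=2n2H}. Substituting the reflection relation $\gz(\ol{2n})=(2^{1-2n}-1)\gz(2n)$ from \eqref{equ:olzeta} into $\tfrac32\gz(2n)+\tfrac32\gz(\ol{2n})$ collapses it to $\tfrac32\cdot 2^{1-2n}\gz(2n)=3\cdot 4^{-n}\gz(2n)$, recovering $A(2n,2)=\tfrac{3}{4^n}\gz(2n)$. This verifies that the barred form in Proposition~\ref{prop:C2n2} and the closed form obtained earlier from Proposition~\ref{prop:Aga=2nd} are one and the same, which is the only place where a genuine (if trivial) identity is being used.
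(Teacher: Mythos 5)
Your proposal is correct and follows essentially the same route as the paper: the paper's direct proof likewise splits $A(2n,2)$ over the four sign patterns and resolves the mixed terms by the stuffle relation together with \eqref{equ:GKZ}, \eqref{equ:Aga=22n2} and \eqref{equ:mixZetaDbl}, which is exactly the content of your invocation of $A_0+A_1+A_2$ via Proposition~\ref{prop:Aga=1d=2} and Proposition~\ref{prop:Aga=22n2}. Your closing consistency check is in fact the paper's first (one-line) proof, deducing the result from \eqref{equ:Aga=2n2H} and \eqref{equ:olzeta}.
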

\begin{proof}
This follows from \eqref{equ:Aga=2n2H} easily. But we may also prove it
by using the definition directly as follows:
\begin{align*}
A(2n,2)=&\sum_{a,b>0, a+b=n}\big(\gz(2a,2b)+\gz(2a,\ol{2b})
 +\gz(\ol{2a},2b)+\gz(\ol{2a},\ol{2b})\big) \\
=&\sum_{a,b>0, a+b=n}\big(\gz(2a,2b)+\gz(2a)\gz(\ol{2b})
 +\gz(\ol{2a},\ol{2b})-\gz(\ol{2n})\big).
\end{align*}
The proposition follows easily from \eqref{equ:GKZ}, \eqref{equ:Aga=22n2} and
\eqref{equ:mixZetaDbl}.
\end{proof}

\section{Euler sums of depth three}
We first consider the restricted sums involving only Euler sums of
total alternating type.
\begin{prop}\label{prop:Aga=32n3}
For all positive integers $n$ we have
\begin{equation}\label{equ:Aga=32n3}
A_3(2n,3)
=\frac{1}{8}\gz(2n)+\frac{1}2\gz(\ol{2n})
+\frac{1}{2}\gz(\ol{2})\gz(\ol{2n-2}).
\end{equation}
\end{prop}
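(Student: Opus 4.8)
The plan is to exploit the fully symmetric structure of $A_3(2n,3)=\sum_{a+b+c=n}\gz(\ol{2a},\ol{2b},\ol{2c})$ by expanding the symmetrized product of three single alternating values
\[
P:=\sum_{\substack{a+b+c=n\\ a,b,c>0}}\gz(\ol{2a})\gz(\ol{2b})\gz(\ol{2c})
\]
with the stuffle (quasi-shuffle) product, exactly as in the depth-two case of Proposition~\ref{prop:Aga=22n2}. Iterating the stuffle product of three depth-one values produces three kinds of terms: the depth-three part, consisting of all six interleavings of $\ol{2a},\ol{2b},\ol{2c}$, whose total over $a+b+c=n$ is $6A_3(2n,3)$; depth-two ``single collision'' terms, where two of the three indices merge; and depth-one ``double collision'' terms. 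The crucial sign bookkeeping, derived from \eqref{equ:olzeta}, is that merging two alternating components multiplies their signs to $+1$, so a merged argument is non-alternating, while a further merge flips the sign back; hence every double-collision term equals $\gz(\ol{2n})$, and by a direct count of ordered triples these contribute exactly $\binom{n-1}{2}\gz(\ol{2n})$.

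First I would compute $P$ itself. Writing $P=\sum_{c}\gz(\ol{2c})\sum_{a+b=n-c}\gz(\ol{2a})\gz(\ol{2b})$ and applying the diagonal convolution \eqref{equ:olZetaDbl} to the inner sum reduces $P$ to a linearly weighted mixed sum $\tfrac12\sum_c(2(n-c)-1)\gz(\ol{2c})\gz(2n-2c)$ together with the diagonal $\sum_c\gz(\ol{2c})\gz(\ol{2n-2c})$. Re-indexing so that the non-alternating argument carries the exponent, both pieces are expressible through $A^{(0)}_1(2n,2)$ and $A^{(1)}_1(2n,2)$ of Lemma~\ref{lem:depthmixZetaDbl} and through \eqref{equ:olZetaDbl} once more, which gives $P$ in closed form as a combination of $\gz(2n)$, $\gz(\ol{2n})$ and $\gz(\ol2)\gz(\ol{2n-2})$.

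Next I would evaluate the depth-two collision terms. After symmetrizing, each such term has the shape $\gz(2p,\ol{2q})$ or $\gz(\ol{2q},2p)$ with $p+q=n$ and $p\ge 2$ (a merged, hence non-alternating, leading or trailing block together with one surviving bar), and a short count over ordered triples shows that each appears with multiplicity $3(p-1)$. Applying the stuffle relation $\gz(2p,\ol{2q})+\gz(\ol{2q},2p)=\gz(2p)\gz(\ol{2q})-\gz(\ol{2n})$ collapses these once again onto $A^{(0)}_1(2n,2)$, $A^{(1)}_1(2n,2)$ and a multiple of $\gz(\ol{2n})$. Finally I would solve
\[
6A_3(2n,3)=P-(\text{depth-two sum})-\binom{n-1}{2}\gz(\ol{2n})
\]
for $A_3(2n,3)$; substituting \eqref{equ:mixZetaDbl} and \eqref{equ:wtmixZetaDbl} and simplifying (the $\gz(2n)$ coefficient collapses to $\tfrac34$, the $\gz(\ol{2n})$ coefficient to $3$, and the $\gz(\ol2)\gz(\ol{2n-2})$ coefficient to $3$) yields the claimed identity.

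The main obstacle is the combinatorial bookkeeping rather than any single hard estimate: one must enumerate the collision terms of the iterated stuffle product with their correct multiplicities, track the sign flips when alternating components merge, and handle the index ranges carefully, since merged blocks have argument $\ge 4$ and so the reduced sums differ from the full ranges of $A^{(r)}_1(2n,2)$ by a few boundary terms. A convenient consistency check is the case $n=3$, where $a=b=c=1$ forces $P=\gz(\ol2)^3$, against which every intermediate formula can be verified.
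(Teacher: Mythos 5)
Your proposal is correct, and the second half of it — symmetrize the triple product $P=\sum_{a+b+c=n}\gz(\ol{2a})\gz(\ol{2b})\gz(\ol{2c})$, stuffle-expand it into $6A_3(2n,3)$ plus depth-two collision terms of multiplicity $3(p-1)$ plus $\binom{n-1}{2}\gz(\ol{2n})$ from the full merges, and solve — is exactly the skeleton of the paper's proof. I checked your sign bookkeeping (two bars merging to an unbarred block, a third bar restoring the bar) and your final coefficients $\tfrac34$, $3$, $3$ for $6A_3(2n,3)$: they are right, and in fact your count of the depth-one contribution is cleaner than the paper's displayed intermediate line, which appears to drop a factor of $2$ in front of $\binom{n-1}{2}\gz(\ol{2n})$ (the final formula is unaffected).

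Where you genuinely diverge is in the evaluation of $P$ itself. The paper derives a fresh identity for the triple product $\frac{te^{xt}}{e^t-1}\frac{te^{yt}}{e^t-1}\frac{te^{zt}}{e^t-1}$ in terms of $f$, $f'$, $f''$ with $f(t)=te^{at}/(e^t-1)$, specializes $x=y=z=\tfrac12$, and extracts coefficients to get $P$ directly as a Bernoulli-polynomial convolution. You instead peel off one factor, apply the depth-two convolution \eqref{equ:olZetaDbl} to the inner sum, and reduce everything to $A^{(0)}_1(2n,2)$, $A^{(1)}_1(2n,2)$ from Lemma~\ref{lem:depthmixZetaDbl} plus one more application of \eqref{equ:olZetaDbl}; I verified this reproduces the paper's value $P=\frac{6n-3}{4}\gz(2n)+\frac{2n^2-3n+4}{4}\gz(\ol{2n})+\frac{\pi^2}{8}\gz(\ol{2n-2})$ (your $-\tfrac32\gz(\ol2)\gz(\ol{2n-2})$ is the same thing). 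Your route is more modular — it reuses lemmas the paper has already established instead of manufacturing a new three-variable generating-function identity — though the Bernoulli machinery is still present underneath, hidden inside Lemma~\ref{lem:depthmixZetaDbl}. Two small points to watch when writing it up: the term $c=n-1$ in your peeling step has an empty inner sum, but \eqref{equ:olZetaDbl} conveniently evaluates to $0$ at weight $2$, so no correction is needed; and the restriction $p\ge2$ on merged blocks costs nothing because the $p=1$ term carries the factor $p-1=0$.
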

\begin{proof}
Let $a=x+y+z$ and $f(t)=t e^{at}/(e^t-1)$. Then
\begin{multline*}
    \frac{te^{xt}}{e^t-1}\frac{t e^{yt}}{e^t-1}\frac{t e^{zt}}{e^t-1}
= \frac{t^3 e^{at}}{(e^t-1)^3}\\
=\frac12\Big\{\big[(a-1)(a-2)t^2+(2a-3)t+2\big]f(t)
-\big[(2a-3)t^2+2t\big]f'(t)+t^2f''(t)\Big\}  .
\end{multline*}
Comparing the coefficients of $t^n$ we get
\begin{multline*}
 \sum_{\substack{j+k+\ell=n\\ j,k,\ell\ge 0}}
{n\choose j,k,\ell} B_j(x)B_k(y)B_\ell(z) \\
=\frac12\Big\{ n(n-1)(a-1)(a-2)B_{n-2}(a)-n(n-2)(2a-3)B_{n-1}(a)
 + (n-1)(n-2)B_n(a) \Big\}  .
\end{multline*}
Setting $x=y=z=1/2$ (so $a=3/2$) and $n\to 2n$ we get
\begin{multline*}
3\sum_{\substack{j+k=n\\ j,k\ge 0}}
{2n\choose 2j} B_{2j}\Big(\frac12\Big)B_{2k}\Big(\frac12\Big)
+ \sum_{\substack{j+k+\ell=n\\ j,k,\ell>0}}
{2n\choose 2j,2k,2\ell} B_{2j}\Big(\frac12\Big)B_{2k}\Big(\frac12\Big) B_{2\ell}\Big(\frac12\Big) \\
=3B_{2n}\Big(\frac12\Big)+\frac12\Big\{-n(2n-1)B_{2n-2}\Big(\frac12\Big)/2
 + (2n-1)(2n-2)B_{2n}\Big(\frac12\Big) \Big\},
\end{multline*}
since $B_m(x)$ is periodic function with period 1 for all $m\ge 0$
and $B_{2n-1}\big(\frac12\big)=0$ for all $n\ge 1$.
Multiplying by $(-1)^{n-1}(2\pi)^{2n}/8$ on both sides,
using \eqref{equ:depth2Bern12} and simplifying we have
\begin{equation*}
\sum_{\substack{j+k+\ell=n\\ j,k,\ell>0}}
\gz(\ol{2j})\gz(\ol{2k})\gz(\ol{2\ell})
=\frac{\pi^2}{8}\gz(\ol{2n-2})
 + \frac{2n^2-3n+4}4 \gz(\ol{2n}) +\frac{6n-3}4 \gz(2n)  .
\end{equation*}
On the other hand, by stuffle relations
\begin{align*}
 \sum_{\substack{j+k+\ell=n\\ j,k,\ell>0}}\gz(\ol{2j})\gz(\ol{2k})\gz(\ol{2\ell})
=&\sum_{\substack{j+k+\ell=n\\ j,k,\ell>0}}\Big(2\gz(\ol{2j},\ol{2k})\gz(\ol{2\ell})
+\gz(2j+2k)\gz(\ol{2\ell})\Big) \\
=& \sum_{\substack{j+k+\ell=n\\ j,k,\ell>0}}\Big(6\gz(\ol{2j},\ol{2k},\ol{2\ell})
+3\gz(2j+2k)\gz(\ol{2\ell})-2\gz(\ol{2n})\Big) \\
=& 6\sum_{\substack{j+k+\ell=n\\ j,k,\ell>0}} \gz(\ol{2j},\ol{2k},\ol{2\ell})
+3\sum_{\substack{j+k=n\\ j,k>0}}(j-1)\gz(2j)\gz(\ol{2k})-\binom{n-1}{2}\gz(\ol{2n}) .
\end{align*}
The proposition follows easily.
\end{proof}

\begin{prop}\label{prop:Ad=3}
For every positive integer $n$ we have
\begin{align} \label{equ:Aga=1d=3}
A_1(2n,3)=&\frac{7}8\gz(2n)+\gz(\ol{2n})\\
\label{equ:Aga=2d=3}
A_2(2n,3)
=&\frac{7}8\gz(2n)+\gz(\ol{2n}).
\end{align}
\end{prop}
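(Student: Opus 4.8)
The plan is to treat the two identities separately, deriving $A_1(2n,3)$ directly from the recursive expansion \eqref{equ:Aga=12nd} together with Lemma~\ref{lem:depthmixZetaDbl}, and then obtaining $A_2(2n,3)$ by subtraction from the total $A(2n,3)$ computed in \eqref{equ:Aga=2n3H}.

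For $A_1(2n,3)$ I would specialize \eqref{equ:Aga=12nd} to $d=3$. The leading term is $\binom{n-1}{2}\gz(\ol{2n})$, while the double sum splits into a $k=1$ piece involving $A_0(2n-2\ell,2)$ and a $k=2$ piece involving $A_0(2n-2\ell,1)$. Using $A_0(2m,2)=\frac34\gz(2m)$ from \eqref{equ:GKZ} and $A_0(2m,1)=\gz(2m)$, both pieces become weighted sums of $\gz(\ol{2\ell})\gz(2n-2\ell)$. After the reindexing $\ell\mapsto n-\ell$ these match $A^{(0)}_1(2n,2)$ and $A^{(1)}_1(2n,2)$ from Lemma~\ref{lem:depthmixZetaDbl}, up to explicit boundary terms arising from the mismatched summation ranges (the missing $\ell=n-1$ term in the $k=1$ sum, and the harmless $\ell=1$ term in the $k=2$ sum where the factor $\ell-1$ vanishes). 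Substituting the closed forms of $A^{(0)}_1$ and $A^{(1)}_1$ and collecting coefficients should then yield \eqref{equ:Aga=1d=3}.

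For $A_2(2n,3)$ I would use the decomposition $A(2n,3)=\sum_{\ga=0}^3 A_\ga(2n,3)$, so that $A_2(2n,3)=A(2n,3)-A_0(2n,3)-A_1(2n,3)-A_3(2n,3)$. Here $A(2n,3)$ is \eqref{equ:Aga=2n3H}, $A_0(2n,3)$ is the $d=3$ instance of Hoffman's formula \eqref{equ:Hoffman}, $A_1(2n,3)$ is the identity just established, and $A_3(2n,3)$ is Proposition~\ref{prop:Aga=32n3}. Converting every barred value to an unbarred one via \eqref{equ:olzeta} (in particular $\gz(\ol 2)=-\tfrac12\gz(2)$ and $\gz(\ol{2n-2})=(2^{3-2n}-1)\gz(2n-2)$), I expect the depth-two product terms $\gz(2)\gz(2n-2)$ to cancel identically, leaving a rational multiple of $\gz(2n)$ that reassembles into \eqref{equ:Aga=2d=3}.

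The main obstacle is the bookkeeping in the $A_1$ computation. One must correctly account for the range discrepancies that generate the boundary term $\gz(2)\gz(\ol{2n-2})$, and check that it cancels against the $-\frac32\gz(\ol 2)\gz(\ol{2n-2})$ contribution hidden inside $A^{(1)}_1(2n,2)$ once $\gz(\ol 2)=-\frac12\gz(2)$ is used. One then has to verify that the polynomial-in-$n$ coefficient of $\gz(\ol{2n})$, assembled from $\binom{n-1}{2}$ together with the linear-in-$n$ pieces of $A^{(0)}_1$ and $A^{(1)}_1$, collapses identically to $1$; this quadratic cancellation is the delicate point. Once $A_1(2n,3)$ is in hand, the identity for $A_2(2n,3)$ is routine, the only real check being that the product terms vanish, after which the striking coincidence $A_1(2n,3)=A_2(2n,3)$ emerges automatically.
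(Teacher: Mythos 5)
Your proposal is correct, and the two halves deserve separate comments. For $A_1(2n,3)$ you follow exactly the paper's route: specialize \eqref{equ:Aga=12nd} to $d=3$, insert $A_0(2m,2)=\tfrac34\gz(2m)$ and $A_0(2m,1)=\gz(2m)$, reindex to match $A^{(0)}_1(2n,2)$ and $A^{(1)}_1(2n,2)$, and invoke Lemma~\ref{lem:depthmixZetaDbl}. The delicate points you flag do work out as you expect: the boundary term $-\tfrac34\gz(2)\gz(\ol{2n-2})$ from the truncated $k=1$ sum cancels against the $-\tfrac32\gz(\ol 2)\gz(\ol{2n-2})=+\tfrac34\gz(2)\gz(\ol{2n-2})$ inside $A^{(1)}_1(2n,2)$, and the coefficient of $\gz(\ol{2n})$, namely $\binom{n-1}{2}+(\tfrac74-n)n+\tfrac{n(2n-1)}{4}$, collapses to $1$. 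For $A_2(2n,3)$ you diverge from the paper, which computes it directly by a stuffle expansion (writing the sum via $\gz(2j)\gz(\ol{2k},\ol{2\ell})-\gz(\ol{2j+2k})\gz(\ol{2\ell})+\gz(2n)$ and then using \eqref{equ:Aga=22n2}, \eqref{equ:mixZetaDbl} and \eqref{equ:olZetaDbl}). Your subtraction $A_2=A-A_0-A_1-A_3$ is legitimate and non-circular, since $A(2n,3)$ in \eqref{equ:Aga=2n3H} comes from Proposition~\ref{prop:Aga=2nd} (i.e.\ from Hoffman's formula alone) and $A_3(2n,3)$ is Proposition~\ref{prop:Aga=32n3}; I checked that the $\gz(2)\gz(2n-2)$ contributions, namely $-\tfrac{2}{4^n}+\tfrac14+\tfrac14(\tfrac{8}{4^n}-1)$, do vanish and the remaining coefficient of $\gz(2n)$ is $\tfrac{2}{4^n}-\tfrac18=\tfrac78+(2^{1-2n}-1)$ as required. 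Your route is shorter; what the paper's independent stuffle computation buys is a redundant derivation that serves as a consistency check on the whole family of formulas (a point the author makes explicitly in the depth-four case), which your approach gives up by construction.
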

\begin{proof}
By \eqref{equ:Aga=12nd} and \eqref{equ:GKZ} we have
\begin{align*}
\text{LHS of }\eqref{equ:Aga=1d=3}
=&\frac34\sum_{j=1}^{n-2} \gz(\ol{2j})\gz(2n-2j)
-\sum_{j=1}^{n-1} (j-1)\gz(\ol{2j})\gz(2n-2j)+{n-1\choose 2}\gz(\ol{2n}).
\end{align*}
Thus \eqref{equ:Aga=1d=3} follows
from Lemma~\ref{lem:depthmixZetaDbl} easily. Similarly by stuffle relations and \eqref{equ:Aga=22n2}
\begin{align} \notag
\text{LHS of }\eqref{equ:Aga=2d=3}
=&\sum_{\substack{j+k+\ell=n\\ j,k,\ell> 0}}
\Big(\gz(2j)\gz(\ol{2k},\ol{2\ell})-\gz(\ol{2j+2k})\gz(\ol{2\ell})+\gz(2n)\Big)\\
=&\frac14\sum_{j=1}^{n-1} \gz(2j) \gz(2n-2j)
+\frac12\sum_{j=1}^{n-1}  \gz(2j)\gz(\ol{2n-2j}) \label{equ:needcancel}\\
 & \hskip2cm -\sum_{j=1}^{n-1} (j-1)\gz(\ol{2j})\gz(\ol{2n-2j})+{n-1\choose 2}\gz(2n). \notag
\end{align}
Here we have used the coincidence that when $j=n-1$ the corresponding two terms of
the two sums in \eqref{equ:needcancel} cancel each other.
By changing index $j\to n-j$ and adding it back we see that
$$\sum_{j=1}^{n-1} (j-1)\gz(\ol{2j})\gz(\ol{2n-2j})
=\frac{n-2}2\sum_{j=1}^{n-1} \gz(\ol{2j})\gz(\ol{2n-2j}).$$
Hence  \eqref{equ:Aga=2d=3} follows from
\cite[(2.4)]{Nakamura2009}, \eqref{equ:mixZetaDbl} and \eqref{equ:olZetaDbl}.
\end{proof}

Combining \cite[Theorem 1]{ShenCai2012}, \eqref{equ:Aga=32n3} and Proposition~\ref{prop:Ad=3} we can prove the next result immediately.
Of course this also follows from \eqref{equ:Aga=2n3H} easily.
\begin{prop}\label{prop:C2n2}
For all positive integers $n$ we have
\begin{equation*}
A(2n,3)= \frac52\gz(2n)+\frac52\gz (\ol{2n})
+\frac12\gz (\ol{2})\big(\gz (\ol{2n-2})+\gz (2n-2)\big).
\end{equation*}
\end{prop}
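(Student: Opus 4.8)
The plan is to exploit the decomposition $A(2n,3)=\sum_{\ga=0}^{3}A_\ga(2n,3)$ and to insert the four constituent formulas, all of which are now available. The term $A_0(2n,3)$ is the restricted sum of ordinary (non-alternating) multiple zeta values of depth $3$; I would read it off from \cite[Theorem 1]{ShenCai2012}, or equivalently from \eqref{equ:Hoffman} with $d=3$, obtaining $A_0(2n,3)=\frac58\gz(2n)-\frac14\gz(2)\gz(2n-2)$. For the remaining three pieces I would substitute $A_1(2n,3)=A_2(2n,3)=\frac78\gz(2n)+\gz(\ol{2n})$ from Proposition~\ref{prop:Ad=3}, together with $A_3(2n,3)=\frac18\gz(2n)+\frac12\gz(\ol{2n})+\frac12\gz(\ol{2})\gz(\ol{2n-2})$ from \eqref{equ:Aga=32n3}.

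Second, I would collect like terms. The coefficients of $\gz(2n)$ add up to $\frac58+\frac78+\frac78+\frac18=\frac52$, and the coefficients of $\gz(\ol{2n})$ add up to $0+1+1+\frac12=\frac52$, which already produces the first two summands of the claimed formula. What remains to be reconciled are the two ``product'' terms $-\frac14\gz(2)\gz(2n-2)$ coming from $A_0$ and $+\frac12\gz(\ol{2})\gz(\ol{2n-2})$ coming from $A_3$.

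The only genuine computation is to show that these two product terms combine into the symmetric expression $\frac12\gz(\ol{2})\bigl(\gz(\ol{2n-2})+\gz(2n-2)\bigr)$. For this I would invoke the reflection relation \eqref{equ:olzeta} in its two special forms $\gz(\ol{2})=-\tfrac12\gz(2)$ and $\gz(\ol{2n-2})+\gz(2n-2)=2^{3-2n}\gz(2n-2)$. A one-line substitution then shows that both $-\frac14\gz(2)\gz(2n-2)+\frac12\gz(\ol{2})\gz(\ol{2n-2})$ and the target $\frac12\gz(\ol{2})\bigl(\gz(\ol{2n-2})+\gz(2n-2)\bigr)$ equal $-2^{1-2n}\gz(2)\gz(2n-2)$, which finishes the argument.

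Finally, I would record the shortcut advertised just after the statement: one can bypass the decomposition entirely and start from the closed form \eqref{equ:Aga=2n3H}, namely $A(2n,3)=\frac{5}{4^n}\gz(2n)-\frac{2}{4^n}\gz(2)\gz(2n-2)$. The very same two instances of \eqref{equ:olzeta} give $\frac52\bigl(\gz(2n)+\gz(\ol{2n})\bigr)=\frac{5}{4^n}\gz(2n)$ and $\frac12\gz(\ol{2})\bigl(\gz(\ol{2n-2})+\gz(2n-2)\bigr)=-\frac{2}{4^n}\gz(2)\gz(2n-2)$, so the claimed identity is precisely \eqref{equ:Aga=2n3H} rewritten in the symmetric ``mixed'' form. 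Since every ingredient is already established, there is no real obstacle; the whole argument is bookkeeping of rational coefficients together with the elementary reflection formula, and the only point requiring care is keeping the powers of two straight in the product terms.
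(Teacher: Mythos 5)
Your proposal is correct and follows exactly the route the paper takes: the paper proves this by combining Shen--Cai's depth-three formula for $A_0(2n,3)$ (equivalently \eqref{equ:Hoffman} with $d=3$) with \eqref{equ:Aga=32n3} and Proposition~\ref{prop:Ad=3}, and likewise notes the shortcut via \eqref{equ:Aga=2n3H}. Your coefficient bookkeeping and the reconciliation of the product terms via \eqref{equ:olzeta} both check out.
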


\section{Euler sums of depth four}
One needs the following Lemma in depth four.
\begin{lem}\label{lem:depthmixZetaDbl2ndmoment}
For every positive integer $n$ and nonnegative integer $r$ we have
\begin{multline}
\label{equ:depthmixZetaDbl2ndmoment}
L_2(n)=\sum_{j=1}^{n-1} j^2 \gz(\ol{2j})\gz(\ol{2n-2j})\\
=\frac{n(2n-1)(4n-1)}{24}\gz(2n)
+\frac{2n-3}{4}\gz(2)\gz(2n-2)+\frac{n^2}{2}\gz(\ol{2n}).
\end{multline}
\end{lem}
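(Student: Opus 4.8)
The plan is to compute the weighted sum $L_2(n)=\sum_{j=1}^{n-1}j^2\gz(\ol{2j})\gz(\ol{2n-2j})$ by the same generating-function technique used in Lemma~\ref{lem:depthmixZetaDbl}, but now with two half-integer Bernoulli factors rather than one of each flavor. First I would pass to Bernoulli polynomials via the dictionary $(-1)^{a-1}(2\pi)^{2a}B_{2a}(1/2)/(2(2a)!)=\gz(\ol{2a})$, which converts $L_2(n)$ into a weighted convolution of the values $B_{2j}(1/2)$. The natural generating function to introduce is
\begin{equation*}
G(x,y)=\sum_{n=1}^\infty \frac{y^{2n}}{(2n)!}\sum_{\substack{j+k=n\\ j,k>0}}x^{2j}B_{2j}\Big(\tfrac12\Big)B_{2k}\Big(\tfrac12\Big)\binom{2n}{2j},
\end{equation*}
so that $G(x,y)=\big(g(y)-1\big)\big(g(xy)-1\big)$ with $g(y)=ye^{y/2}/(e^y-1)=y/\big(2\sinh(y/2)\big)$ exactly as in the previous lemma. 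Extracting the factor $j^2$ amounts to applying the Euler operator $(x\,\partial_x/2)^2$ and then setting $x=1$, so the key object is $\tfrac14\big(x\partial_x\big)^2 G(x,y)\big|_{x=1}$, whose $y^{2n}$-coefficient is (up to the normalizing power of $(2\pi)^2$) the sum of $j^2 B_{2j}(1/2)B_{2n-2j}(1/2)$ I want.

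After setting $x=1$ the expression collapses to differential operators acting on $g$ alone. Writing $h(y)=g(y)-1$, a short computation gives $\tfrac14(x\partial_x)^2 G|_{x=1}=\tfrac14\,h(y)\big(y^2 g''(y)+y\,g'(y)\big)$, and using the differential equation satisfied by $g$ (equivalently the identities for $\sinh$) one reduces $y^2g''+yg'$ to a combination of $g$, $g''$ and lower-order pieces, much as the single-moment case produced $\tfrac{y^2}{8}g-\tfrac{y^2}{2}g''-\cdots$. Comparing coefficients of $y^{2n}$ then yields a linear identity expressing $\sum_{j}j^2 B_{2j}(1/2)B_{2n-2j}(1/2)$ in terms of $B_{2n}(1/2)$, $B_{2n-2}(1/2)$ and $B_{2n}$; translating back through the zeta dictionary and through $\gz(\ol{2})=-\pi^2/12$ (so $\gz(2)$ appears via $\gz(\ol{2})$) should produce the three claimed terms. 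The coefficients $n(2n-1)(4n-1)/24$, $(2n-3)/4$ and $n^2/2$ are precisely the shapes one expects from a second moment, so this is encouraging.

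The main obstacle will be the bookkeeping of the derivative terms: unlike the first-moment case, the operator $(x\partial_x)^2$ produces cross terms that force me to handle $g'(y)$ as well as $g''(y)$, and since $g$ is even only after the shift by $1$, care is needed to track which pieces contribute at even versus odd order in $y$ (only the even part survives the restriction to weight $2n$). A convenient way to avoid sign errors is to exploit the symmetry $j\leftrightarrow n-j$: since $\gz(\ol{2j})\gz(\ol{2n-2j})$ is symmetric, one has $\sum_j j^2(\cdots)=\tfrac12\sum_j\big(j^2+(n-j)^2\big)(\cdots)$, which lets me re-express $L_2(n)$ through the already-known zeroth moment $A_1^{(0)}$-type sum $\sum_j \gz(\ol{2j})\gz(\ol{2n-2j})$ of \eqref{equ:olZetaDbl} plus a genuinely new piece, thereby isolating exactly where the new computation is required. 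Alternatively, one could derive \eqref{equ:depthmixZetaDbl2ndmoment} directly from \cite[(2.8)]{Nakamura2009} by differentiating twice in the parameter before specializing, which may be the cleaner route and would serve as an independent check.
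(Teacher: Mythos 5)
Your proposal is essentially the paper's own proof: the paper forms the same generating function $g(y)g(xy)$ with $g(y)=ye^{y/2}/(e^y-1)$, applies $(x\,\partial_x)^2$ and sets $x=1$, rewrites the result in terms of $h(y)=y/(e^y-1)$ and its derivatives (the needed linearization of the products being $g(y)^2=h(y)-yh'(y)$), and extracts the coefficient of $y^{2n}$ before converting back to zeta values. One caution on your side remark: the $j\leftrightarrow n-j$ symmetry only yields the first-moment identity $\sum_j j\,\gz(\ol{2j})\gz(\ol{2n-2j})=\tfrac{n}{2}\sum_j \gz(\ol{2j})\gz(\ol{2n-2j})$ and imposes no constraint whatsoever on the second moment, so it cannot be used to offload any part of the genuine $(x\,\partial_x)^2$ computation.
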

\begin{proof}
Let $g(y)=ye^{y/2}/(e^y-1)$ and $h(y)=y/(e^y-1)$.
Now consider the generating function
\begin{align*}
G(x,y)=&  \sum_{n=0}^{\infty}\frac{y^{2n}}{(2n)!}\sum_{j=0}^{n}
x^{2j}B_{2j}\Big(\frac12\Big) B_{2n-2j}\Big(\frac12\Big) {2n\choose 2j} \\
=& \sum_{j=0}^{\infty} \left(\sum_{n=j}^{\infty} B_{2n-2j}\Big(\frac12\Big)\frac{y^{2n-2j}}{(2n-2j)!} \right)
B_{2j}\Big(\frac12\Big) \frac{(xy)^{2j}}{(2j)!} \\
=&g(y)g(xy).
\end{align*}
Thus
\begin{equation*}
 \left.\left(x\frac{\partial}{\partial x}\right)^2 G(x,y)\right|_{x=1}
= -\frac13 y^3 h'''(y)-\frac12 y^2 h''(y) +\frac1{12} y^3 h'(y)-\frac1{12} y^2 h(y).
\end{equation*}
Consequently
\begin{equation*}
\sum_{j=1}^{n-1} 4j^2 \frac{B_{2j} (1/2)}{(2j)!}\frac{B_{2n-2j} (1/2)} {(2n-2j)!}
=- \frac{n(2n-1)(4n-1)}3 \frac{B_{2n}}{(2n)!}
+\frac{2n-3}{12} \frac{B_{2n-2}}{(2n-2)!}.
\end{equation*}
Multiplying by $(-1)^{n}(2\pi)^{2n}/16$ on both sides we get \eqref{equ:depthmixZetaDbl2ndmoment}.
This completes the proof of the lemma.
\end{proof}

\begin{prop}\label{prop:Aga=gaAlld4}
For every positive integer $n$ we have
\begin{alignat}{4}  \label{equ:Aga=1d4}
A_1(2n,4)=&\frac{19}{16} \gz(2n)+&\gz(\ol{2n})&-&\frac18\gz(2)\gz(2n-2)\\
 \label{equ:Aga=2d4}
A_2(2n,4)=&\frac{57}{32} \gz(2n)+&\frac32 \gz(\ol{2n})&-&\frac3{16}\gz(2) \gz(2n-2)  \\
 \label{equ:Aga=3d4}
A_3(2n,4)=&\frac{11}{16}\gz(2n)+&\frac32 \gz(\ol{2n}) &-&\frac12\gz(2)\gz(\ol{2n-2}) \\
\label{equ:Aga=42nd4}
A_4(2n,4)=&\frac{11}{64}\gz(2n)+&\frac38 \gz(\ol{2n}) &-&\frac18\gz(2)\gz(\ol{2n-2}),
\end{alignat}
and
\begin{equation}\label{equ:mixZeta4All}
A(2n,4)=\frac{35}{4^{n+1}} \gz(2n)-\frac5{4^n}\gz(2)\gz(2n-2).
\end{equation}
\end{prop}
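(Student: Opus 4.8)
The plan is to treat the five quantities separately, in each case using the stuffle (harmonic) product to rewrite $A_\ga(2n,4)$ in terms of products of single (alternating) zeta values and lower-depth restricted sums, and then to evaluate the resulting weighted convolution sums by Hoffman's formula \eqref{equ:Hoffman} together with the two moment lemmas. The last identity \eqref{equ:mixZeta4All} requires no separate argument: it is exactly the $d=4$ instance of Proposition~\ref{prop:Aga=2nd}, and it doubles as a global consistency check, since summing the four formulas for $A_1,\dots,A_4$ with Hoffman's $A_0(2n,4)$ must reproduce it.

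For $A_1(2n,4)$ I would start from the explicit expansion \eqref{equ:Aga=12nd} with $d=4$, which writes $A_1(2n,4)$ as $-\binom{n-1}{3}\gz(\ol{2n})$ plus three weighted sums of $\gz(\ol{2\ell})\,A_0(2n-2\ell,4-k)$ for $k=1,2,3$. Substituting $A_0(m,1)=\gz(m)$, the sum formula \eqref{equ:GKZ} for $A_0(2m,2)$, and the depth-three instance $A_0(2m,3)=\frac58\gz(2m)-\frac14\gz(2)\gz(2m-2)$ of \eqref{equ:Hoffman}, every term becomes a convolution $\sum_\ell \ell^r\gz(\ol{2\ell})\gz(2n-2\ell)$ (after expanding the binomials $\binom{\ell-1}{k-1}$ as polynomials in $\ell$ of degree $\le 2$), or $\gz(2)$ times such a convolution of weight $2n-2$. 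These are precisely the quantities governed by Lemma~\ref{lem:depthmixZetaDbl}, so \eqref{equ:Aga=1d4} follows by collecting terms.

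For the totally alternating sum $A_4(2n,4)$ I would repeat the generating-function device used for $A_3(2n,3)$, now with four factors: since $te^{t/2}/(e^t-1)$ is the exponential generating function attached to $\gz(\ol{\;\cdot\;})$, the symmetric convolution $\sum\gz(\ol{2j_1})\cdots\gz(\ol{2j_4})$ is read off from $\bigl(te^{t/2}/(e^t-1)\bigr)^4=t^4e^{2t}/(e^t-1)^4$, which I would express through $f(t)=te^{2t}/(e^t-1)$ and its first three derivatives exactly as the cube was handled with $a=3/2$ (here $a=2$). On the other hand, iterating the depth-two stuffle relation expands this same symmetric convolution as $24\,A_4(2n,4)$ plus strictly lower-depth contributions (products such as $\gz(2j+2k)\,\gz(\ol{2\ell})\,\gz(\ol{2m})$ and their further merges), all of which are known from Proposition~\ref{prop:Aga=22n2}, Proposition~\ref{prop:Aga=32n3}, \eqref{equ:olZetaDbl} and the two moment lemmas. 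Equating the two expressions and solving for $A_4$ yields \eqref{equ:Aga=42nd4}.

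The middle cases $A_2(2n,4)$ and $A_3(2n,4)$ are the crux and where the bookkeeping is heaviest. The idea is the same---peel off one component at a time by stuffle so as to reduce to an unbarred index times a lower-depth sum of the correct alternating type---but now the sign rule (two bars merge to an unbarred slot, a bar and an unbarred slot merge to a bar) forces one to track several distinct families of terms at once. Concretely, I expect $A_3(2n,4)$ to reduce to combinations of $\gz(2j)\,A_3(2n-2j,3)$- and $\gz(\ol{2j})\,A_2(2n-2j,3)$-type pieces, hence to the depth-three results \eqref{equ:Aga=32n3} and \eqref{equ:Aga=2d=3} plus the double-bar second moment $L_2(n)$ of Lemma~\ref{lem:depthmixZetaDbl2ndmoment}; and $A_2(2n,4)$ to reduce through the depth-two and depth-three formulas together with both the moments $A_1^{(r)}(2n,2)$ of Lemma~\ref{lem:depthmixZetaDbl} and $L_2(n)$. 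The main obstacle is therefore not any single hard identity but the combinatorial accounting: correctly enumerating the stuffle terms with their signs and multiplicities, and checking that the degree-$\le 2$ polynomial weights in the convolution variable are matched exactly by the available moments $A_1^{(0)},A_1^{(1)},A_1^{(2)}$ and $L_2$. Once the reductions are in place, each of \eqref{equ:Aga=2d4} and \eqref{equ:Aga=3d4} follows by substitution and simplification, and the whole proposition can be cross-checked against \eqref{equ:mixZeta4All}.
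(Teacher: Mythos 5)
Your plan matches the paper's proof essentially step for step: the paper likewise reduces each $A_\ga(2n,4)$ by stuffle relations to lower-depth restricted sums and the convolution moments of Lemmas~\ref{lem:depthmixZetaDbl} and~\ref{lem:depthmixZetaDbl2ndmoment} (writing out only the hardest case $A_2(2n,4)$ in detail and leaving the others, including the $t^4e^{2t}/(e^t-1)^4$ computation for $A_4$, to the reader), and it obtains \eqref{equ:mixZeta4All} from Proposition~\ref{prop:Aga=2nd} while using the sum over all $\ga$ as the same consistency check you describe. There is no gap beyond the routine bookkeeping that the paper itself also omits.
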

\begin{proof}
One may use the same method as in the last section. We will only prove \eqref{equ:Aga=2d4}
which is not covered by the last two sections of this paper and is
the most complicated case in depth 4. By definition
\begin{multline*}
2 A_2(2n,4)= \sum_{\substack{j_1+j_2+j_3+j_4=n\\ j_1,j_2,j_3,j_4>0}}
\gz(\ol{2j_1})\Big( \gz(\ol{2j_2},2j_3,2j_4)+\gz(2j_2,\ol{2j_3},2j_4)
+\gz(2j_2,2j_3,\ol{2j_4}) \Big)\\
 -X_2(n)-X_3(n)
\end{multline*}
where
\begin{align*}
X_2=& \sum_{\substack{j_1+j_2+j_3+j_4=n\\ j_1,j_2,j_3,j_4>0}}
\Big( \gz(2j_1+2j_2,2j_3,2j_4)+\gz(2j_1,2j_2+2j_3,2j_4)
+\gz(2j_1,2j_2,2j_3+2j_4) \Big) \\
=&\sum_{\substack{j+k+\ell=n\\ j,k,\ell> 0}}
 (j-1)\Big(\gz(2j)\gz(2k,2\ell)-\gz(2j+2k)\gz(2\ell)+\gz(2n) \Big) \\
=&\sum_{j=1}^{n-2} (j-1)\gz(2j)A_0(2n-2j,2)
-\sum_{k=1}^{n-1}\binom{k-1}{2}\gz(2k)\gz(2n-2k)+\binom{n-1}{3}\gz(2n)   \\
\end{align*}
and similarly
{\allowdisplaybreaks
\begin{align*}
X_3=&\sum_{\substack{j_1+j_2+j_3+j_4=n\\ j_1,j_2,j_3,j_4>0}}
 \Big( \gz(\ol{2j_1+2j_2},\ol{2j_3},2j_4)+\gz(\ol{2j_1},\ol{2j_2+2j_3},2j_4)
+\gz(2j_1,\ol{2j_2},\ol{2j_3+2j_4}) \\
&+\gz(2j_1,\ol{2j_2+2j_3},\ol{2j_4})
+\gz(\ol{2j_1+2j_2},2j_3,\ol{2j_4}) +\gz(\ol{2j_1},2j_2,\ol{2j_3+2j_4}) \Big) \\
=&\sum_{\substack{j+k+\ell=n\\ j,k,\ell> 0}}
 (j-1)\Big\{\gz(\ol{2j})\Big(\gz(\ol{2k},2\ell)+\gz(2k,\ol{2\ell})\Big)
- \gz(2j+2k,2\ell)-\gz(2\ell,2j+2k)  \\
& \hskip7cm -  \gz(\ol{2j+2k},\ol{2\ell})-\gz(\ol{2\ell},\ol{2j+2k})\Big\}
\end{align*}
}
Hence
\begin{align*}
2A_2(2n,4)=  \sum_{\ell=1}^{n-1}\frac{(\ell-1)(\ell-2)}{2}
 \Big(2\gz(2\ell) \gz(2n-2\ell)+\gz(\ol{2\ell}) \gz(\ol{2n-2\ell})\Big)
 -3\binom{n-1}{3}\gz(\ol{2n})   \\
 +\sum_{\ell=1}^{n-3} \gz(\ol{2\ell}) A_1(2n-2\ell,3)
 -\sum_{\ell=1}^{n-2}(\ell-1)\Big\{ \gz(2\ell) A_0(2n-2\ell,2)
 +\gz(\ol{2\ell}) A_1(2n-2\ell,2)\Big\}.
\end{align*}
We now can finish the proof of \eqref{equ:Aga=2d4}
by using \eqref{equ:Hoffman}, Proposition~\ref{prop:Aga=1d=2},  Proposition~\ref{prop:Ad=3}, Lemma~\ref{lem:depthmixZetaDbl}, Lemma~\ref{lem:depthmixZetaDbl2ndmoment} and the following
identities (see \cite[(2.4)]{Nakamura2009} and \cite[(11.10)]{Shimura2007})
\begin{align*}
 A^{(0)}_0(2n,2)=\sum_{\ell=1}^{n-1} \gz(2\ell) \gz(2n-2\ell) =&\frac{2n+1}{2} \gz(2n), \\
 A^{(1)}_0(2n,2)=\sum_{\ell=1}^{n-1} \ell \gz(2\ell) \gz(2n-2\ell) =&\frac{n(2n+1)}{4} \gz(2n), \\
 A^{(2)}_0(2n,2)=\sum_{\ell=1}^{n-1} \ell^2\gz(2\ell) \gz(2n-2\ell) =&
 \frac{n(8n^2+6n+1)}{24} \gz(2n)-\frac{2n-3}{2} \gz(2)\gz(2n-2).
\end{align*}

All the other formulas in the proposition can be proved similarly. In particular,
we have two different proofs of \eqref{equ:mixZeta4All},
one by Proposition~\ref{prop:Aga=2nd} and the other by adding up
all $A_\ga(2n,4)$ altogether for $\ga=0,\dots,4$, which provides us some confidence
that all the formulas are correct.
We leave the details to the interested reader.
\end{proof}

\section{Euler sums of total alternating type}
As in \cite{Hoffman2012} we define an evaluation homomorphism $\evalE:\Sym\to\R$ such that
$\evalE(x_j)=1/j^2$ for all $i\ge 1$. In proving \cite[Lemma 1]{Hoffman2012} Hoffman showed that
\begin{equation}\label{equ:HoffmanEval}
\evalE(E(u))=\frac{\sinh(\pi\sqrt u)}{\pi\sqrt u}.
\end{equation}
Define for all positive integers $d\le n$
$$M^{\rm tot}_{n,d}=\sum_{j_1<\cdots<j_d} \sum_{n_1+\cdots+n_d=n} \sum_{k=1}^d
(-1)^{j_1+\cdots +j_d}x_{j_1}^{n_1}\cdots x_{j_d}^{n_d}.$$
Clearly we have $\evalE(M^{\rm tot}_{n,d})=A_d(2n,d)$.
We first determine the generating function of $A_d(2n,d)$ completely.
\begin{thm} \label{thm:gfunTot}
Define the generating function
$$\psi_{\rm tot}(u,v)= 1+\sum_{n\ge d\ge 1} A_d(2n,d) u^n v^d.$$
Then we have
\begin{equation}\label{equ:psitot}
\psi_{\rm tot}(u,v)= \frac{\sin( \pi \sqrt{(1-v)u}/2) }{ \sqrt{1-v}\sin(\pi\sqrt u/2)}
 \cdot\frac{\cos( \pi \sqrt{(v+1)u}/2)}{ \cos(\pi\sqrt u/2)}.
\end{equation}
\end{thm}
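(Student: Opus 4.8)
The plan is to compute the full two-variable generating function of the $M^{\rm tot}_{n,d}$ directly as an Euler product over the variables $x_j$, and then to apply $\evalE$ using only the classical factorizations of $\sin$ and $\cos$ that already underlie \eqref{equ:HoffmanEval}. First I would package the inner data of $M^{\rm tot}_{n,d}$ as a product: for a fixed strictly increasing tuple $j_1<\cdots<j_d$, the sum over compositions $n_1+\cdots+n_d=n$ with all $n_i\ge 1$ has generating function $\prod_{i=1}^d \frac{u x_{j_i}}{1-u x_{j_i}}$ in $u$. Summing over all $d$-subsets, weighting by $v^d$, and inserting the sign $(-1)^{j_1+\cdots+j_d}$ carried by $M^{\rm tot}_{n,d}$ turns the whole generating function into a single infinite product,
\[
1+\sum_{n\ge d\ge 1} M^{\rm tot}_{n,d}\, u^n v^d
=\prod_{j\ge 1}\left(1+(-1)^j v\,\frac{u x_j}{1-u x_j}\right)
=\prod_{j\ge 1}\frac{1-\bigl(1-(-1)^j v\bigr)u x_j}{1-u x_j}.
\]
Since $1-(-1)^j v$ equals $1-v$ for even $j$ and $1+v$ for odd $j$, separating the product according to the parity of $j$ displays the parameter $(1-v)u$ on the even indices and $(1+v)u$ on the odd indices.

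Next I would apply $\evalE$, i.e. substitute $x_j\mapsto 1/j^2$, reducing everything to four classical products of the shape $\prod_{j\text{ even}}(1-au/j^2)$ and $\prod_{j\text{ odd}}(1-au/j^2)$ for $a\in\{1,\,1-v,\,1+v\}$. From $\prod_{j\ge 1}(1-au/j^2)=\sin(\pi\sqrt{au})/(\pi\sqrt{au})$ (the same factorization giving \eqref{equ:HoffmanEval}), the substitution $j=2k$ yields $\prod_{j\text{ even}}(1-au/j^2)=2\sin(\pi\sqrt{au}/2)/(\pi\sqrt{au})$, and dividing the full product by this even block gives $\prod_{j\text{ odd}}(1-au/j^2)=\cos(\pi\sqrt{au}/2)$. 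Taking the ratio of the $a=1-v$ to the $a=1$ even blocks produces $\sin(\pi\sqrt{(1-v)u}/2)/(\sqrt{1-v}\,\sin(\pi\sqrt u/2))$, while the ratio of the $a=1+v$ to the $a=1$ odd blocks produces $\cos(\pi\sqrt{(1+v)u}/2)/\cos(\pi\sqrt u/2)$; their product is exactly the right-hand side of \eqref{equ:psitot}.

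The step needing care is the justification that $\evalE$ may be applied to the infinite product factor-by-factor, because $M^{\rm tot}_{n,d}$ is not literally an element of $\Sym$: the signs $(-1)^{j_1+\cdots+j_d}$ break the symmetry in the $x_j$. The cleanest way around this is to stay at the numerical level, reading the displayed Euler product as an identity of formal power series in $u$ and $v$ whose coefficients are convergent series in the numbers $1/j^2$, and to observe that after the rescalings $j=2k$ and $j=2k-1$ the even-indexed and odd-indexed factors are each genuinely symmetric in their own variable sets, so Hoffman's evaluation of $E$ applies to each block separately. One must also check absolute convergence to legitimize the rearrangement into even and odd blocks; this holds for $u$ near $0$ since $\sum_j 1/j^2<\infty$, and the resulting identity of analytic functions then extends to the claimed closed form by analytic continuation. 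Beyond this, the computation is the routine simplification sketched above.
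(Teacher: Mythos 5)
Your proposal is correct and follows essentially the same route as the paper: form the Euler product $\prod_j\bigl(1+(-1)^j v\,ux_j/(1-ux_j)\bigr)$, split the factors by the parity of $j$, and evaluate the even and odd blocks via the product formulas for $\sin(\pi\sqrt{au}/2)$ and $\cos(\pi\sqrt{au}/2)$ (the paper packages these blocks as $E^e$ and $E^o$ and cites \eqref{equ:HoffmanEval}, which is the same computation). Your extra remark on extending $\evalE$ beyond $\Sym$ to the non-symmetric $M^{\rm tot}_{n,d}$ is a legitimate point of care that the paper passes over silently, but it does not change the argument.
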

\begin{proof}
It is easy to see that the generating function of $M^{\rm tot}_{n,d}$ is given by
\begin{equation*}
\calF_{\rm tot}(u,v)=1+\sum_{n\ge d\ge 1} M^{\rm tot}_{n,d} u^n v^d
=\prod_{j=1}^\infty  (1+(-1)^j(vux_j+vu^2x_j^2+\cdots)).
\end{equation*}
Hence
\begin{align*}
 \calF_{\rm tot}(u,v)=& \prod_{j=1}^\infty
 \left(1+ \frac{(-1)^j vu x_j}{1-ux_j}\right)\\
=&  \prod_{j=1}^\infty\frac{  ( 1+(v-1)u x_{2j}) ( 1-(v+1)u x_{2j-1})} {1-ux_j}.
\end{align*}
Define the even and odd parts of $E(u)$ by
\begin{equation*}
  E^e(u)= \prod_{j=1}^\infty (1+u x_{2j}), \quad\text{and}\quad
 E^o(u)=\prod_{j=1}^\infty (1+u x_{2j-1}),
\end{equation*}
respectively. Then we have
\begin{equation}\label{equ:Ftot}
 \calF_{\rm tot}(u,v)=E^e((v-1)u)E^o(-(v+1)u)E(-u)^{-1}.
\end{equation}
Clearly
\begin{align}
f(u):=\evalE( E^e(u))=&\prod_{j=1}^\infty \left(1+\frac{u}{(2j)^2}\right)
=\frac{\sinh( \pi \sqrt{u}/2) }{ \pi \sqrt{u}/2}, \label{equ:evalEvalEven}\\
g(u):=\evalE( E^o(u))=&\prod_{j=1}^\infty \left(1+\frac{u}{(2j-1)^2}\right)
=\frac{\evalE( E(u))}{\evalE( E^e(u))}= \cosh( \pi \sqrt{u}/2).\label{equ:evalEvalodd}
\end{align}
The theorem follows from evaluating \eqref{equ:Ftot} using the above and
\eqref{equ:HoffmanEval}.
\end{proof}

\begin{thm}\label{thm:Aga=ddArbitray}
For every positive integers $d\le n$ we have
\begin{align*} 
& A_d(2n,d)=\sum_{j=0}^{\lfloor(d-1)/2 \rfloor}\frac{(-1)^j4^{j-n-d+1}}{(2j+1)!}
 \binom{2d-2j-1}{d} \gz(2n-2j)\pi^{2j} \\
+&\sum_{c=1}^d \sum_{j=0}^{\lfloor(c-1)/2 \rfloor}\sum_{k=0}^{\lfloor(d-c)/2 \rfloor}
\frac{(-1)^{c+j+k} (1-4^{j+k-n})}{c (2j)!(2k)!4^{d-1}} \binom{2c-2j-2}{c-1} \binom{2d-2c-2k}{d-c} z_{n,j,k} \\
+&\sum_{c=1}^d \sum_{j=1}^{\lfloor c/2 \rfloor} \sum_{k=0}^{\lfloor(d-c-1)/2 \rfloor}
\frac{(-1)^{c+j+k} 4^{j+k-n-d+1}}{c (2j-1)!(2k+1)!}
\binom{2c-2j-1}{c-1} \binom{2d-2c-2k-1}{d-c}
z_{n,j,k},
\end{align*}
where $z_{n,j,k}=\gz(2n-2j-2k)\pi^{2j+2k}.$
\end{thm}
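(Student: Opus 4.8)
The plan is to read off $A_d(2n,d)$ from the closed form \eqref{equ:psitot} of its generating function in Theorem~\ref{thm:gfunTot}, by the same coefficient-extraction method used for $\Xi(2n,d)$ in Section~3. The starting observation is that \eqref{equ:psitot} is a \emph{product} $\psi_{\rm tot}(u,v)=S(u,v)\,C(u,v)$ of a ``sine factor'' $S(u,v)=\sin(\pi\sqrt{(1-v)u}/2)\big/\big(\sqrt{1-v}\,\sin(\pi\sqrt u/2)\big)$ and a ``cosine factor'' $C(u,v)=\cos(\pi\sqrt{(1+v)u}/2)\big/\cos(\pi\sqrt u/2)$. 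Writing $S_c(u)=[v^c]S$ and $C_k(u)=[v^k]C$, this gives $A_d(2n,d)=[u^n]\sum_{c=0}^d S_c(u)\,C_{d-c}(u)$, and the convolution in $c$ is exactly the source of the outer sum $\sum_{c=1}^d$ in the theorem; the extreme values contribute boundary terms because $S_0=C_0=1$.

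First I would compute $S_c$ and $C_k$ separately. Set $x=\pi^2u/4$, so that $\sqrt x=\pi\sqrt u/2$ and $\sin\sqrt x,\cos\sqrt x$ are precisely the denominators above. Expanding $\sin\sqrt{x(1-v)}/\sqrt{1-v}$ in powers of $(1-v)$ and $\cos\sqrt{x(1+v)}$ in powers of $(1+v)$ and applying the operator identity $\sum_{m\ge c}\binom{m}{c}a_mx^m=\frac{x^c}{c!}D^c\sum_m a_m x^m$ (with $D=d/dx$), exactly as in the derivation of $\tilde{G}_d$ in Section~3, I obtain
\[
S_c=\frac{\sqrt x}{\sin\sqrt x}\cdot\frac{(-x)^c}{c!}\,D^c\frac{\sin\sqrt x}{\sqrt x},
\qquad
C_k=\frac{1}{\cos\sqrt x}\cdot\frac{x^k}{k!}\,D^k\cos\sqrt x .
\]
The core of the proof is then a pair of closed-form evaluations of these iterated derivatives, each established by induction in the manner of Lemma~\ref{lem:solveDE} but now with only a two-function system: there are polynomials $\alpha_c,\beta_c,\gamma_k,\delta_k$ in $x$ with
\[
S_c=\alpha_c(x)+\beta_c(x)\,\sqrt x\cot\sqrt x,
\qquad
C_k=\gamma_k(x)+\delta_k(x)\,\sqrt x\tan\sqrt x .
\]
The four binomial-over-factorial coefficients $\binom{2c-2j-2}{c-1}/(2j)!$, $\binom{2c-2j-1}{c-1}/(2j-1)!$, $\binom{2d-2c-2k}{d-c}/(2k)!$ and $\binom{2d-2c-2k-1}{d-c}/(2k+1)!$ appearing in the statement are the coefficients of these four polynomials, read off from the solution of the recursions with initial data $\alpha_0=\gamma_0=1$, $\beta_0=\delta_0=0$.

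Now I would multiply the two expansions and extract $[u^n]$. Using $(\sqrt x\cot\sqrt x)(\sqrt x\tan\sqrt x)=x$, the mixed term collapses to a polynomial, so that
\[
\sum_{c=0}^d S_c C_{d-c}=\Pi_d(x)+\Big(\sum_c\beta_c\gamma_{d-c}\Big)\sqrt x\cot\sqrt x+\Big(\sum_c\alpha_c\delta_{d-c}\Big)\sqrt x\tan\sqrt x
\]
for a polynomial $\Pi_d$. I then insert the expansions $\sqrt x\cot\sqrt x=-2\sum_m 4^{-m}\gz(2m)u^m$ from \eqref{equ:cotFormula} and its companion $\sqrt x\tan\sqrt x=2\sum_m(1-4^{-m})\gz(2m)u^m$, the latter obtained from the $\sqrt x\tanh\sqrt x$ expansion in \eqref{equ:cotFormula} by the substitution $x\mapsto-x$ (that is, $u\mapsto-u$). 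Since each polynomial coefficient is a monomial sum in $x^{j+k}\propto\pi^{2(j+k)}u^{j+k}$, extracting the coefficient of $u^n$ converts every product into a single value $\gz(2n-2j-2k)\pi^{2j+2k}=z_{n,j,k}$: the cot-series produces the arithmetic factors $4^{\,j+k-n}$ (yielding Part~III, together with the single-index residual Part~I coming from the boundary term where $\gamma_0=1$ and the $k$-convolution degenerates, carrying the binomial $\binom{2d-2j-1}{d}$), while the tan-series produces the factors $1-4^{\,j+k-n}$ (yielding Part~II). The polynomial $\Pi_d$ contributes only through $\gz(0)=-1/2$, which merges into the $j+k=n$ terms already present.

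The step I expect to be the main obstacle is not any one computation but the \emph{bookkeeping of the double convolution}: carrying the two independent inner indices $j$ (from the sine factor) and $k$ (from the cosine factor) through each product $\beta_c\gamma_{d-c}$ and $\alpha_c\delta_{d-c}$, tracking the accompanying powers of $2$ and $4$ coming from the $(8x)^j$-type monomials and the $2^{2d}$-type normalisations so that they assemble into the stated $4^{\,j+k-n-d+1}$ and $4^{-(d-1)}$ factors, and keeping the floor-function ranges of $j$ and $k$ aligned with the degrees of $\alpha_c,\beta_c,\gamma_k,\delta_k$. Isolating the boundary term as Part~I, with its distinct binomial $\binom{2d-2j-1}{d}$, from the generic terms requires particular care. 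As an independent check on the final identity one can verify that adding the resulting $A_d(2n,d)$ to the companion formulas for $A_0,\dots,A_{d-1}$ recovers the total sum $A(2n,d)$ of Proposition~\ref{prop:Aga=2nd}, and that the cases $d=3,4$ reproduce \eqref{equ:Aga=32n3} and \eqref{equ:Aga=42nd4}.
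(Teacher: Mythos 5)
Your proposal is correct and follows essentially the same route as the paper: the paper likewise splits $\psi_{\rm tot}(u,v)$ into the sine and cosine factors, takes their $v$-expansions (citing Hoffman's Lemma~3 and the expansion from \cite{ZTn} with $v\mapsto -v$ rather than re-deriving them via the $D^c$-operator and ODE recursion, which is exactly how those expansions were obtained), and then extracts $[v^du^n]$ from the product using the $\cot$ and $\tan$ series, with the three parts of the formula arising from the same three surviving cross-terms you identify. The only difference is that you rebuild the two intermediate factor expansions from scratch instead of quoting them.
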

\begin{proof}
Notice that the first factor of $\psi_{\rm tot}(u,v)$ in \eqref{equ:psitot}
is exactly the generating function for $A_0(2n,d)$ (i.e. the restricted sum
of multiple zeta values of fixed weight $2n$ and depth $d$). This is given by
\eqref{equ:Hoffman} found by Hoffman. For the second factor we
see from the proof of \cite[Theorem 1.1]{ZTn} that
\begin{align*}
\frac{\cos(\pi\sqrt{(1-v)u}/2)}{ \cos(\pi\sqrt{u}/2)}
=1+\sum_{d\ge 1} v^d &\left(  -\frac{\pi^2 u}{d} \sum_{j=0}^{\lfloor \frac{d-2}2\rfloor}\frac{(-\pi^2u)^j}
{2^{2d}(2j+1)!}\binom{2d-2j-3}{d-1}  \right.  \\
&  \left. +\frac{\pi \sqrt{u}}{d}  \tan(\pi \sqrt{u}/2) \sum_{j=0}^{\lfloor \frac{d-1}2\rfloor}\frac{(-\pi^2u)^j}{2^{2d}(2j)!}
\binom{2d-2j-2}{d-1}\right)
\end{align*}
Changing $v$ to $-v$ and combining with \cite[Lemma 3]{Hoffman2012}
we get
\begin{align*}
\psi_{\rm tot}(u,v)=\left\{ \sum_{d\ge 0}  v^d  \right.
 & \left( - \pi\sqrt u \cot(\pi\sqrt u/2) \sum_{k=0}^{\lfloor \frac{d-1}2\rfloor}\frac{(-\pi^2 u)^k}
{2^{2d}(2k+1)!}\binom{2d-2k-1}{d}\right.  \\
&  \hskip4cm \left.\left. +\sum_{k=0}^{\lfloor \frac{d}2\rfloor}\frac{(-\pi^2 u)^k}{2^{2d}(2k)!}
\binom{2d-2k}{d}\right) \right\}\\
\times\left\{1+\sum_{c\ge 1} (-v)^c \right. &\left( \frac{1}{c} \sum_{j=1}^{\lfloor \frac{c}2\rfloor}\frac{(-\pi^2u)^j}
{2^{2c}(2j-1)!}\binom{2c-2j-1}{c-1}  \right. \\
&  \left.\left.  +\frac{\pi \sqrt{u}}{c}  \tan(\pi \sqrt{u}/2) \sum_{j=0}^{\lfloor \frac{c-1}2\rfloor}\frac{(-\pi^2u)^j}{2^{2c}(2j)!}
\binom{2c-2j-2}{c-1}\right) \right\}
\end{align*}
Finally we can extract the coefficient of $v^du^n$
using the well-know identities
\begin{align*}
\pi\sqrt u \cot(\pi\sqrt u/2)=& -4\sum_{m=0}^\infty \frac{\gz(2m)}{4^m} u^m,\quad
\pi\sqrt u \tan (\pi\sqrt u/2)= 4\sum_{m=0}^\infty (4^m-1) \frac{\gz(2m)}{4^m} u^m.
\end{align*}
This completes the proof theorem.
\end{proof}

By setting $d=2,3,4$ we get \eqref{equ:Aga=22n2}, \eqref{equ:Aga=32n3} and \eqref{equ:Aga=42nd4}, respectively.

\section{Euler sums with one alternating component}
Define the generating function
\begin{equation*}
\psi_1(u,v)=  \sum_{n\ge d\ge 1} A_1(2n,d) u^n v^d.
\end{equation*}
Using symmetric functions we can find an explicit expression of this function.
\begin{thm}\label{thm:genFunPsi1}
We have
\begin{equation} \label{equ:Evafunc}
\psi_1(u,v)= \frac{v\sin(\pi \sqrt{(1-v)u})}{ \sqrt{1-v} \sin(\pi \sqrt{u})}
\left(\frac{1}{2(1-v)}- \frac{\pi\sqrt{u}}{2\sqrt{1-v} \sin( \pi\sqrt{(1-v)u})} \right).
\end{equation}
\end{thm}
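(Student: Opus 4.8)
The plan is to imitate the symmetric-function method of the previous section, but now to mark the single alternating slot with a formal bookkeeping variable $t$. For each index $j$ I attach the factor
$$1+\bigl(1+t(-1)^j\bigr)\frac{vux_j}{1-ux_j},$$
reading its three summands as ``$j$ unused'', ``$j$ used as a non-alternating slot with some positive exponent'', and ``$j$ used as the marked alternating slot''. Multiplying over all $j$ and extracting the coefficient of $t^1$ (that is, applying $\partial_t|_{t=0}$) produces a series $\calF_1(u,v)$ whose monomials carry exactly one sign $(-1)^{j_k}$. Since $\evalE(x_j)=1/j^2$ turns $(-1)^{j_k}x_{j_k}^m$ into $(-1)^{j_k}/j_k^{2m}$, one checks directly that $\evalE(\calF_1)=\psi_1(u,v)$: the sum over which slot is alternating and over all compositions of $n$ reproduces precisely $A_1(2n,d)$. (The convergence condition $(s_1,\vep_1)\ne(1,1)$ never intervenes here because every argument $2s_i\ge 2$.)

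Next I would record the factorization that the $t$-derivative yields,
$$\calF_1(u,v)=\calF(u,v)\sum_{i\ge1}\frac{(-1)^i vux_i}{1+(v-1)ux_i},$$
where $\calF(u,v)=E((v-1)u)H(u)$ is the multiple-zeta generating function from Section~2. The first factor evaluates, via $\evalE(E(u))=\sinh(\pi\sqrt u)/(\pi\sqrt u)$ from \eqref{equ:HoffmanEval} together with $H=E(-\,\cdot\,)^{-1}$ and $\sqrt{(v-1)u}=i\sqrt{(1-v)u}$, to
$$\evalE(\calF)=\frac{\sin(\pi\sqrt{(1-v)u})}{\sqrt{1-v}\,\sin(\pi\sqrt u)},$$
which is exactly the prefactor in \eqref{equ:Evafunc}. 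To evaluate the second factor while staying inside genuine symmetric functions, I would split the sum over even and odd indices and recognize each part as a logarithmic derivative,
$$\sum_{i\ge1}\frac{(-1)^i x_i}{1+wx_i}=\frac{(E^e)'(w)}{E^e(w)}-\frac{(E^o)'(w)}{E^o(w)},\qquad w=(v-1)u,$$
with $E^e,E^o$ as in the previous section. Applying $\evalE$ and using \eqref{equ:evalEvalEven} and \eqref{equ:evalEvalodd}, the homomorphism sends these to $\tfrac{d}{dw}\log$ of $\sinh(\pi\sqrt w/2)/(\pi\sqrt w/2)$ and of $\cosh(\pi\sqrt w/2)$.

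Carrying out these two derivatives and subtracting produces the combination $\coth(\pi\sqrt w/2)-\tanh(\pi\sqrt w/2)=2/\sinh(\pi\sqrt w)$; the substitution $\sqrt w=i\sqrt{(1-v)u}$ then converts the hyperbolic functions into the trigonometric ones and gives
$$\evalE\!\left(\sum_{i\ge1}\frac{(-1)^i vux_i}{1+(v-1)ux_i}\right)=v\left(\frac{1}{2(1-v)}-\frac{\pi\sqrt u}{2\sqrt{1-v}\,\sin(\pi\sqrt{(1-v)u})}\right).$$
Multiplying this by $\evalE(\calF)$ yields \eqref{equ:Evafunc}. The only genuine content beyond bookkeeping lies in this second factor, and the main obstacle I anticipate is justifying that $\evalE$ may be applied to the sign-weighted series $\sum_i(-1)^ivux_i/(1+(v-1)ux_i)$, which is \emph{not} a symmetric function, and commuted past the even/odd logarithmic-derivative decomposition; this is precisely why rewriting it through the honest symmetric functions $E^e,E^o$ is essential, and one must check that $\evalE(E^e(w))$ and $\evalE(E^o(w))$ are nonvanishing so that the quotient of evaluations equals the evaluation of the quotient. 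An equivalent and perhaps cleaner way to settle this point is to substitute $x_i=1/i^2$ directly and invoke the partial-fraction expansion $\sum_{i\ge1}(-1)^i/(i^2-a^2)=\tfrac{1}{2a^2}-\tfrac{\pi}{2a\sin(\pi a)}$ with $a=\sqrt{(1-v)u}$.
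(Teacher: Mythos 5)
Your proposal is correct and follows essentially the same route as the paper: you form the generating function $\calF_1(u,v)=E((v-1)u)H(u)\sum_k(-1)^kvux_k/(1+(v-1)ux_k)$ (your $t$-marking is just a repackaging of the paper's direct sum over the alternating slot), split the extra sum into even and odd logarithmic derivatives of $E^e$ and $E^o$, and finish with $\coth(\gt/2)-\tanh(\gt/2)=2/\sinh\gt$, exactly as in the paper's proof. Your closing partial-fraction identity is a harmless alternative verification of the same evaluation, not a different method.
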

\begin{proof}
Define for all positive integers $d\le n$
$$M^{1}_{n,d}=\sum_{j_1<\cdots<j_d} \sum_{n_1+\cdots+n_d=n} \sum_{k=1}^d
(-1)^{j_k}x_{j_1}^{n_1}\cdots x_{j_d}^{n_d}.$$
Clearly we have $\evalE(M^{1}_{n,d})=A_1(2n,d)$. Moreover the generating
function of $M^{1}_{n,d}$ is given by
$$\calF_1(u,v)=\sum_{k=1}^\infty (-1)^k(vux_k+vu^2x_k^2+\cdots)
\prod_{j\ne k} (1+vux_i+vu^2x_i^2+\cdots)=\sum_{n\ge d\ge 1} M^{1}_{n,d} u^n v^d.$$
It is straight-forward to see that
$$\calF_1(u,v)=E((v-1)u)H(u)\sum_{k=1}^\infty  \frac{(-1)^k vu x_k}{1+(v-1)ux_k}.$$
Therefore
\begin{equation*}
\psi_1(u,v)=\evalE(\calF_1(u,v))=\evalE(E((v-1)u)H(u))vu\left(\frac{f'((v-1)u)}{f((v-1)u)}- \frac{g'((v-1)u)}{g((v-1)u)} \right),
\end{equation*}
where $f(u)$ and $g(u)$ are defined by \eqref{equ:evalEvalEven} and
\eqref{equ:evalEvalodd}, respectively. They satisfy
\begin{equation*}
\frac{f'(u)}{f(u)}=  \coth( \pi \sqrt{u}/2) \frac{\pi}{4\sqrt{u}}-\frac{1}{2(v-1)u},  \quad
\frac{g'(u)}{g(u)}=  \tanh( \pi \sqrt{u}/2) \frac{\pi}{4\sqrt{u}}.
\end{equation*}
The equation \eqref{equ:Evafunc} follows immediately from the identity
$$\frac{\coth(\gt/2)-\tanh(\gt/2)}{2}
=\frac{\cosh^2(\gt/2)-\sinh^2(\gt/2)}{2\sinh(\gt/2)\cosh(\gt/2)}
=\frac{1}{\sinh(\gt)}.$$
The theorem is now proved.
\end{proof}

Taking $v\to 1$ Theorem \ref{thm:genFunPsi1} we have by L'H\^opital's Rule
\begin{multline*}
 \psi_1(u,1)=-\frac{\pi^2 u }{12}\pi\sqrt{u} \csc(\pi \sqrt{u})\\
 =-\frac{\pi^2 u }{12}
 \sum_{n=0}^\infty \frac{2(2^{2n-1}-1)(-1)^{n-1}B_{2n}\pi^{2n}}{(2n)!} u^n
 =\frac{\pi^2 u }{6}\sum_{n=0}^\infty  \gz(\ol{2n}) u^n.
\end{multline*}
Therefore we get
\begin{cor} \label{cor:Aga=1dAll}
For every positive integer $n$ we have
\begin{equation}\label{equ:Aga=1dAll}
\sum_{d=1}^n A_1(2n,d)=\gz(2)\gz(\ol{2n-2}).
\end{equation}
\end{cor}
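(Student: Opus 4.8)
The plan is to recognize that $\sum_{d=1}^n A_1(2n,d)$ is exactly the coefficient of $u^n$ in $\psi_1(u,1)$, the specialization at $v=1$ of the generating function computed in Theorem~\ref{thm:genFunPsi1}. Thus the entire proof reduces to two tasks: evaluating the limit $\psi_1(u,1)=\lim_{v\to1}\psi_1(u,v)$, and then reading off the Taylor coefficients of the resulting closed form in $u$.

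For the limit I would substitute $w=1-v$ and abbreviate $a=\pi\sqrt u$, rewriting the formula of Theorem~\ref{thm:genFunPsi1} as
\[
\psi_1(u,v)=\frac{v\sin(a\sqrt w)}{\sqrt w\,\sin a}\left(\frac{1}{2w}-\frac{a}{2\sqrt w\,\sin(a\sqrt w)}\right).
\]
The delicate point, and the only genuine obstacle, is that both the prefactor and the bracketed term individually carry inverse powers of $w$ that blow up as $w\to0$, so one must track the cancellation. The prefactor is harmless: since $\sin(a\sqrt w)\sim a\sqrt w$, it tends to the finite value $a/\sin a$. The cancellation lives in the bracket. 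Expanding $\sin(a\sqrt w)=a\sqrt w\bigl(1-\tfrac{a^2w}{6}+\cdots\bigr)$ and inverting gives $\tfrac{a}{2\sqrt w\,\sin(a\sqrt w)}=\tfrac{1}{2w}+\tfrac{a^2}{12}+O(w)$, so the two singular $\tfrac{1}{2w}$ terms annihilate and the bracket converges to $-a^2/12$. Hence $\psi_1(u,1)=\frac{a}{\sin a}\cdot\bigl(-\tfrac{a^2}{12}\bigr)=-\tfrac{a^3}{12\sin a}=-\tfrac{\pi^2u}{12}\,\pi\sqrt u\,\csc(\pi\sqrt u)$, precisely the expression already displayed in the text. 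This step may be organized through the Taylor expansion above or, equivalently, via L'H\^opital's rule applied to the combined singular part.

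With the closed form secured the rest is routine bookkeeping. I would expand $\pi\sqrt u\,\csc(\pi\sqrt u)$ through its standard Bernoulli-number series and then convert its coefficients using $\gz(\ol{2m})=(2^{1-2m}-1)\gz(2m)$ together with $\gz(2m)=\frac{(-1)^{m-1}B_{2m}(2\pi)^{2m}}{2(2m)!}$; this recasts $\psi_1(u,1)$ in the form $\frac{\pi^2u}{6}\sum_{m\ge0}\gz(\ol{2m})u^m$. Extracting the coefficient of $u^n$ then yields $\frac{\pi^2}{6}\gz(\ol{2n-2})$, and identifying $\pi^2/6=\gz(2)$ gives the asserted value $\gz(2)\gz(\ol{2n-2})$. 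The only care needed in this last stage is the tracking of signs and of the powers of $2$ when rewriting the Bernoulli-number coefficients as alternating zeta values.
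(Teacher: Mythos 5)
Your proposal is correct and matches the paper's own argument: the paper likewise obtains $\psi_1(u,1)=-\tfrac{\pi^2u}{12}\,\pi\sqrt u\,\csc(\pi\sqrt u)$ by taking $v\to1$ in Theorem~\ref{thm:genFunPsi1} (via L'H\^opital, equivalent to your Taylor expansion in $w=1-v$), expands through the Bernoulli series, rewrites the coefficients as $\tfrac{\pi^2u}{6}\sum_{m\ge0}\gz(\ol{2m})u^m$, and reads off the coefficient of $u^n$. No gaps.
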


Finally we are ready to prove the restricted sum formula for all Euler sums of exactly one
alternating component with fixed weight and depth at even arguments.
\begin{thm}\label{thm:Aga=1dArbitray}
For every positive integers $d\le n$ we have
\begin{equation}\label{equ:Aga=1dArbitray}
A_1(2n,d)=\gz(\ol{2n})-\sum_{j=0}^{\lfloor(d-2)/2 \rfloor}\sum_{k=0}^{2j+1} \frac{(-1)^{d+j+k}}{(2j+1)!}\binom{2j+1}{k}
\binom{(k-3)/2}{d-1} \gz(2n-2j)\pi^{2j}.
\end{equation}
\end{thm}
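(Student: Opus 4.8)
The plan is to read off the coefficient of $u^nv^d$ from the closed form of $\psi_1(u,v)$ given in Theorem~\ref{thm:genFunPsi1}. Distributing the bracket and cancelling the common factor $\sin(\pi\sqrt{(1-v)u})$ in the second term, one gets
\[
\psi_1(u,v)=\frac{v}{2(1-v)}\cdot\frac{\sin(\pi\sqrt{(1-v)u})}{\sqrt{1-v}\,\sin(\pi\sqrt u)}-\frac{v}{2(1-v)}\cdot\frac{\pi\sqrt u}{\sin(\pi\sqrt u)}.
\]
The first factor $\frac{\sin(\pi\sqrt{(1-v)u})}{\sqrt{1-v}\sin(\pi\sqrt u)}$ is exactly $1+\sum_{n\ge d\ge1}A_0(2n,d)u^nv^d$, the generating function of Hoffman's restricted MZV sums from \cite{Hoffman2012}. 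The second term is elementary: using $\frac{\pi\sqrt u}{\sin(\pi\sqrt u)}=-2\sum_{\ell\ge0}\gz(\ol{2\ell})u^\ell$ and $\frac{v}{1-v}=\sum_{d\ge1}v^d$, it contributes $\gz(\ol{2n})$ to $A_1(2n,d)$ at every depth, which is the leading term of \eqref{equ:Aga=1dArbitray}.

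The real work is the first term. To force the unbarred values $\gz(2n-2j)$ and the binomial coefficients of the statement to appear, I would rewrite the $A_0$-factor by the angle-subtraction identity
\[
\frac{\sin(\theta s)}{\sin\theta}=\cos\bigl(\theta(1-s)\bigr)-\cot\theta\,\sin\bigl(\theta(1-s)\bigr),\qquad \theta=\pi\sqrt u,\ \ s=\sqrt{1-v}.
\]
This replaces $\csc(\pi\sqrt u)$, whose expansion produces the barred $\gz(\ol{\cdot})$, by $\cot(\pi\sqrt u)$, whose expansion $\pi\sqrt u\cot(\pi\sqrt u)=1-2\sum_{m\ge1}\gz(2m)u^m$ produces the unbarred $\gz(2m)$; at the same time it introduces the quantity $1-\sqrt{1-v}$ inside the trigonometric arguments. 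Extracting $[u^n]$ from the $\cot$-part then pairs $\gz(2n-2j)$ with the coefficient $\frac{(-1)^j\pi^{2j}}{(2j+1)!}(1-\sqrt{1-v})^{2j+1}$ coming from $\sin(\pi\sqrt u\,(1-\sqrt{1-v}))$.

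To extract $[v^d]$ I would expand $(1-\sqrt{1-v})^{2j+1}$ by the binomial theorem, which yields $\binom{2j+1}{k}$ together with the half-integer powers $(1-v)^{k/2}$; the prefactor $(1-v)^{-3/2}$ turns these into $(1-v)^{(k-3)/2}$, and reading off the coefficient of $v^{d-1}$ by the generalized binomial theorem gives precisely $\binom{(k-3)/2}{d-1}$. Collecting the signs produces \eqref{equ:Aga=1dArbitray}.

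The main obstacle is the bookkeeping of the weight-$2n$ (that is, pure $\pi^{2n}$) contributions and of the summation ranges. The $\cot$-part a priori produces $\gz(2n-2j)\pi^{2j}$ for all $j$ up to $n-1$, whereas the statement has only $j\le\lfloor(d-2)/2\rfloor$; the terms with larger $j$ must cancel against the contributions of $\cos(\pi\sqrt u(1-\sqrt{1-v}))$ and of the constant term of $\pi\sqrt u\cot(\pi\sqrt u)$, and one must also verify that the apparent coefficients for $d>n$ vanish so that the restriction $n\ge d$ is respected. Carrying out these cancellations cleanly—and checking that the residual weight-$2n$ piece collapses to the single coefficient $+\gz(\ol{2n})$ together with the $j=0$ summand—is where the care is needed. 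As a cross-check, and as an alternative route that avoids the trigonometric rewriting, I would first establish the clean reduction $A_1(2n,d)=\gz(\ol{2n})+\tfrac12\sum_{c=1}^{d-1}A_0(2n,c)$ directly from the displayed decomposition, then substitute Hoffman's formula \eqref{equ:Hoffman} and evaluate the inner sum over $c$ using the generating function $\sum_{c\ge0}\binom{2c+r}{c}x^c=\frac{1}{\sqrt{1-4x}}\bigl(\frac{1-\sqrt{1-4x}}{2x}\bigr)^{r}$; matching the result to \eqref{equ:Aga=1dArbitray} then reduces to a finite binomial identity.
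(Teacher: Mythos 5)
Your proposal is correct, but it reaches \eqref{equ:Aga=1dArbitray} by a genuinely different (and cleaner) route than the paper. The paper starts from the same decomposition of $\psi_1(u,v)$, but then writes the coefficient of $v^{d+1}$ as $\frac{u\sqrt u}{2\sin\sqrt u}\cdot\frac{(-u)^d}{d!}D^d h(u)$ with $h(u)=\sin\sqrt u/(u\sqrt u)$, derives a recursive system of differential equations for polynomials $\tP_d,\tQ_d$ encoding $h^{(d)}$, passes to generating functions $p_j(y),q_j(y)$, and then \emph{guesses and verifies} the closed forms $p_j(y)=\frac{(-1)^{j+1}}{(2j+1)!}\sum_k(-1)^k\binom{2j+1}{k}(1-y)^{(k-3)/2}$. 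Your angle-subtraction identity produces exactly this object directly: the $\cot$-part of your expansion pairs $\gz(2n-2j)$ with $\frac{(-1)^j\pi^{2j}}{(2j+1)!}\,v(1-v)^{-3/2}(1-\sqrt{1-v})^{2j+1}$, and $(1-v)^{-3/2}(1-\sqrt{1-v})^{2j+1}=\sum_{k=0}^{2j+1}(-1)^k\binom{2j+1}{k}(1-v)^{(k-3)/2}$ is precisely the paper's $p_j$ up to the stated normalization --- so your route replaces the guess-and-verify ODE step by one trigonometric identity and the binomial theorem. One remark on the ``main obstacle'' you flag: no cancellation between the $\cot$- and $\cos$-parts is actually needed. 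Since $1-\sqrt{1-v}=O(v)$, the factor $v(1-v)^{-3/2}(1-\sqrt{1-v})^{2j+1}$ is $O(v^{2j+2})$, so the coefficient of $v^d$ vanishes automatically unless $2j+1\le d-1$, which is exactly the truncation $j\le\lfloor(d-2)/2\rfloor$ in the statement; likewise the $\cos(\pi\sqrt u(1-\sqrt{1-v}))$ term contributes to $u^n$ only at $v$-order $\ge 2n+1>d$, hence not at all for $d\le n$. Finally, your alternative reduction $A_1(2n,d)=\gz(\ol{2n})+\frac12\sum_{c=1}^{d-1}A_0(2n,c)$ is correct (it is immediate from $\frac{v}{2(1-v)}=\frac12\sum_{d\ge1}v^d$ applied to Hoffman's generating function) and checks against \eqref{equ:Aga=1d=2}, \eqref{equ:Aga=1d=3} and \eqref{equ:Aga=1d4}; it is arguably the most transparent statement of the result and does not appear in the paper.
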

\begin{proof}
We expand \eqref{equ:Evafunc} as follows:
\begin{align*}
\psi_1(u,v))=& \frac{v}{2(1-v)^{3/2}}
\frac{\sin(\pi\sqrt{(1-v)u})}{\sin(\pi \sqrt{u})}
-\frac{v\pi\sqrt{u}}{2(1-v)\sin(\pi \sqrt{u})}  \\
=& \frac{v\pi \sqrt{u}}{2 \sin(\pi \sqrt{u})} \sum_{j=1}^\infty
\frac{(-1)^j  \pi^{2j} u^j}{(2j+1)!} (1-v)^{j-1}
=\sum_{d=1}^\infty v^d g_d(\pi^2 u)
\end{align*}
where for all $d\ge 0$
\begin{align*}
g_{d+1}(u)=&(-1)^d\frac{\sqrt u}{2\sin\sqrt u}
\sum_{j\ge d+1}\frac{(-1)^{j}u^{j}}{(2j+1)!}\binom{j-1}{d}\\
=&\frac{u\sqrt u}{2\sin\sqrt u}\cdot\frac{(-u)^{d}}{d!}\cdot
D^d \left(\frac{\sin\sqrt u}{u\sqrt u}-\frac1u\right) \\
=&\frac{u\sqrt u}{2\sin\sqrt u}\cdot\frac{(-u)^{d}}{d!}\cdot
D^d \left(\frac{\sin\sqrt u}{u\sqrt u}\right)-\frac{ \sqrt u}{2\sin\sqrt u}.
\end{align*}
Let $h(u)=\sin\sqrt u/(u\sqrt u).$ We want to find polynomials $\tP_d(u)$
and  $\tQ_d(u)$ such that for all $d\ge 0$
\begin{equation*}
2d! \left(g_{d+1}(u)+\frac{ \sqrt u}{2\sin\sqrt u}\right)=\tP_d(u)\sqrt u\cot\sqrt u+\tQ_d(u),
\end{equation*}
with initial conditions $\tP_0(u)=0$ and $\tQ_0(u)=1$. Thus
\begin{equation*}
h^{(d)}(u)= (-1)^d u^{-d-1} \tP_d(u)\cos\sqrt u +(-1)^d u^{-d-3/2} \tQ_d(u)\sin\sqrt u,
\end{equation*}
from which we get the following recursive system of differential equations: $\forall d\ge 0$
\begin{equation}\label{equ:DEsystem}
\left\{
\aligned
\tP_{d+1}(x)=&  (d+1)\tP_d(x)-x\tP'_d(x)- \frac12 \tQ_d(x) \\
\tQ_{d+1}(x)=& \frac{2d+3}{2}\tQ_d(x)-x\tQ'_d(x)+\frac{x}2 \tP_d(x)
\endaligned
\right.
\end{equation}
Now define
\begin{align*}
p(x,y)=&\sum_{d=0}^\infty \tP_d(x) \frac{y^d}{d!}=\sum_{j=0}^\infty p_j(y) x^j,\\
q(x,y)=&\sum_{d=0}^\infty \tQ_d(x) \frac{y^d}{d!}=\sum_{j=0}^\infty q_j(y) x^j.
\end{align*}
Then the system \eqref{equ:DEsystem} translates into the following: $\forall j\ge1$
\begin{equation}\label{equ:DEsystemNew}
\left\{
\aligned
(1-y)p'_j(y)+(j-1)p_j(y)\, =\, &-\frac12 q_j(y) \\
(1-y)q'_j(y) +\Big(j-\frac32\Big )q_j(y)=\, &\frac12 p_{j-1}(y)
\endaligned
\right.
\end{equation}
Moreover, since
$$\tQ_d(0)= \frac{2d+1}{2}\tQ_{d-1}(0)=\cdots=\frac{(2d+1)!!}{2^d}$$
we see that
$$q_0(y)=q(0,y)=\sum_{d=0}^\infty \tQ_d(0) \frac{y^d}{d!}=(1-y)^{-3/2}.$$
After solving \eqref{equ:DEsystemNew} recursively for the first few cases
one can easily observe some obvious pattern. Then it is not hard to verify that
\begin{align*}
p_j(y)=& \frac{(-1)^{j+1}}{(2j+1)!}\sum_{k=0}^{2j+1} (-1)^k \binom{2j+1}{k} (1-y)^{(k-3)/2},\\     q_j(y)=& \frac{(-1)^{j}}{(2j)!}\sum_{k=0}^{2j} (-1)^k \binom{2j}{k} (1-y)^{(k-3)/2},
\end{align*}
from which we can find immediately:
\begin{align*}
\tP_{d}(x)=& d!\sum_{j=0}^{\lfloor(d-1)/2 \rfloor}\sum_{k=0}^{2j+1} \frac{(-1)^{d+j+k+1}}{(2j+1)!}\binom{2j+1}{k}
\binom{(k-3)/2}{d}x^j,\\
\tQ_{d}(x)=& d!\sum_{j=0}^{\lfloor d/2 \rfloor}\sum_{k=0}^{2j} \frac{(-1)^{d+j+k}}{(2j)!}\binom{2j}{k}
\binom{(k-3)/2}{d}x^j.
\end{align*}
By \eqref{equ:cotFormula} we see that
\begin{multline*}
\left(g_{d+1}(u)+\frac{\sqrt u}{2\sin\sqrt u}\right)=
\sum_{\ell=0}^\infty \frac{\gz(2\ell)}{\pi^{2\ell}} u^\ell \sum_{j=0}^{\lfloor(d-1)/2 \rfloor}\sum_{k=0}^{2j+1} \frac{(-1)^{d+j+k}}{(2j+1)!}\binom{2j+1}{k}
\binom{(k-3)/2}{d}u^j \\
+\text{term of degree}<d.
\end{multline*}
Replacing $d$ by $d-1$, $u$ by $\pi^2 u$ and
considering the coefficient of $u^n$ we get \eqref{equ:Aga=1dArbitray}.
This finishes the proof of the theorem.
\end{proof}

By taking $d=2,3,4$ in Theorem~\ref{thm:Aga=1dArbitray} we get
\eqref{equ:Aga=1d=2}, \eqref{equ:Aga=1d=3} and \eqref{equ:Aga=1d4}, respectively.

\begin{rem}
Notice that the coefficients for $x^j$ in $\tP_{d}(x)$ and $\tQ_{d}(x)$
are given by sums instead of just one term. Using WZ method
we can show that no closed formulas in terms of $j$ can be found
in the sense of \cite[\S 8.7]{WZ}.
\end{rem}

We conclude the paper by pointing out that
it is possible to carry out the above procedure
to compute $A_\ga(2n,d)$ for each specific $d$ and $\ga$
when $\ga$ is either close to $0$ or close to $d$. But
when $\ga$ moves closer to $d/2$ it seems that the method used in \cite{Hoffman2012,ZTn}
and this paper becomes too unwieldy to apply.

\end{document}